\newtheorem{theorem}{Theorem}[section]  
\newtheorem{proposition}{Proposition}[section]  
\newtheorem{definition}{Definition}[section]  
\newtheorem{lemma}{Lemma}[section]  
\newtheorem{remark}{Remark}[section]  
\newtheorem{corollary}{Corollary}[section]  
\def\Id{{\rm Id}\,}
\def\da{\delta\!a}
\def\du{\delta\!u}
\def\dv{\delta\!v}
\def\dr{\delta\!\rho}
\def\dF{\delta\!F}
\def\dG{\delta\!G}
\def\d{\partial}
\def\ddj{\dot \Delta_j}
\def\ddk{\dot \Delta_k}
\def\divA{\, \hbox{\rm div}_A\,  }
\def\divx{\, \hbox{\rm div}_x\,  }
\def\divy{\, \hbox{\rm div}_y\,  }
\def\tilde{\widetilde}
\def\hat{\widehat}
\def\wh{\widehat}
\def\wt{\widetilde}
\newcommand\R{\mathbb{R}}
\newcommand\T{\mathbb{T}}
\newcommand\Z{\mathbb{Z}}
\newcommand{\ds}{\displaystyle}
\newcommand{\N}{\mathbb{N}}
\newcommand{\ep}{\varepsilon}
\newcommand{\Supp}{\hbox{Supp}\,} 
\newcommand{\Tr}{\hbox{\rm{Tr}\,}}
\newcommand{\adj}{\hbox{\rm{adj}\,}}
\renewcommand{\Re}{\,{\rm{Re}}\,}
\def\divx{\, \hbox{\rm div}_x\,  }
\def\divy{\, \hbox{\rm div}_y\,  }
\renewcommand{\div}{\mbox{\rm div}\;\!}
\newcommand{\bftau}{\mbox{\boldmath $\tau$}}
\def\eqdefa{\buildrel\hbox{\footnotesize def}\over =}
\def\cA{{\mathcal A}}
\def\cC{{\mathcal C}}
\def\cD{{\mathcal D}}
\def\cF{{\mathcal F}}
\def\cL{{\mathcal L}}
\def\cO{{\mathcal O}}
\def\cP{{\mathcal P}}
\def\cQ{{\mathcal Q}}
\def\cS{{\mathcal S}}
\newcommand{\Int}{\displaystyle \int}
\newcommand{\Frac}{\displaystyle \frac}
\def\d{\partial}
\def\ep{\varepsilon}
\def\eps{\varepsilon}
\def\tilde{\widetilde}
\def\hat{\widehat}
\begin{document}

\title{Fourier analysis methods for the compressible Navier-Stokes equations}

\author{Rapha\"el Danchin  
}


\address{Universit\'{e} Paris-Est,  LAMA UMR 8050, UPEMLV, UPEC, CNRS,\\
61 avenue du G\'en\'eral de Gaulle, 94010 Cr\'eteil Cedex\\ 
danchin@univ-paris12.fr}



\maketitle

\begin{abstract}
In the last three decades, Fourier analysis methods have known a growing importance in the study of linear and nonlinear PDE's. In particular, techniques based on Littlewood-Paley decomposition and paradifferential calculus have proved to be very efficient for 
investigating evolutionary  fluid mechanics equations in the whole space or in the torus.
We here give an overview of results that we can get by Fourier analysis and paradifferential calculus, for  the compressible 
Navier-Stokes equations.  We focus on the Initial Value Problem 
in the case where the fluid domain is $\R^d$ (or the torus $\T^d$) with $d\geq2,$  and also 
establish some asymptotic properties of global small solutions.
The time decay estimates in the critical regularity framework that are stated at the end of the survey
are new,   to the best of our knowledge. 
\end{abstract}

\section{Introduction}

In the Eulerian description,  a general compressible   fluid evolving in some open set $\Omega$ of  $\R^d$ is characterized 
at every  material point~$x$ in~$\Omega$ and time  $t\in\R$
by its  {\it velocity field} $u= u(t,x)\in\R^d,$   {\it  density} $\varrho=\varrho(t,x)\in\R_+,$
{\it pressure} $p=p(t,x)\in\R,$
{\it internal energy  by unit mass}  $e= e(t,x)\in\R,$
{\it entropy by unit mass} $s=s(t,x)$ and  {\it absolute temperature} $T=T(t,x).$
In the absence of external forces, those quantities are governed by:
\begin{itemize}
\item The mass balance:
$$ 
\partial_t\varrho+\div(\varrho u)=0.
$$
\item The momentum balance\footnote{With the convention that $\bigl(\div(a\otimes b)\bigr)^j\eqdefa \sum_i\d_i(a^i\,b^j).$}:
$$
\partial_t(\varrho u)+\div(\varrho u\otimes u)=\div\bftau-\nabla p,
$$
where $\bftau$ stands for the \emph{viscous stress tensor}.
\item The energy balance:
$$
\partial_t\Bigl(\varrho\bigl(e+{\textstyle\frac{| u|^2}2}\bigr)\Bigr)
+\div\Bigl(\varrho\bigl(e+{\textstyle\frac{| u|^2}2}\bigr) u\Bigr)
=\div(\bftau\cdot u+pu)-\div  q,
$$
where $q$ is the \emph{heat flux} vector. 
\item  The  entropy inequality:
\begin{equation}\label{ineq:entropy}\d_t(\varrho s)+\div(\varrho s u)\geq -\div\bigl(\textstyle{\frac{q}T}\bigr)\cdotp
\end{equation}
\end{itemize} 
In what follows, we concentrate on so-called \emph{Newtonian fluids}\footnote{That is to say:
 the viscous stress tensor $\bftau$ is a linear function of  $D_xu,$ invariant under rigid transforms,
there is no internal  mass couples (and thus the {\it angular momentum} is conserved), and  the fluid is isotropic ({\it viz.} the physical quantities
depend only on $(t,x)$).}. Hence   (see e.g.\cite{batchelor})  $\bftau$ is given by:
$$
\bftau\eqdefa \lambda\div u\,\Id+2\mu D( u),
$$
where the real numbers $\lambda$ and $\mu$ are the {\it viscosity coefficients}
and $D( u)\eqdefa \frac12(D u+{}^T\!D u)$ is the {\it deformation tensor.} 
\smallbreak
If we assume  in addition that the {\it Fourier law} $q=-k\nabla T$ is satisfied  then we get the following system of equations:
\begin{equation}\label{eq:fluid}
\left\{\begin{array}{l}
\partial_t\varrho+\div(\varrho u)=0,\\
\partial_t(\varrho u)+\div(\varrho u\otimes u)-2\div(\mu D( u)) - \nabla(\lambda\div u) +\nabla p=0,\\
\partial_t(\varrho e)+\div(\varrho e u)+p\div u-\div(k\nabla T)=2\mu D( u):D( u)+\lambda(\div u)^2.
\end{array}\right.
\end{equation}
We shall further postulate that the entropy $s$ is interrelated with $p,$ $T$ and $e$ through 
the so-called {\it Gibbs relation}
$$
Tds=de+p\, d\Bigl(\frac1\varrho\Bigr),
$$
and thus we get the following evolution equation for $s$:
\begin{equation}\label{eq:entropy}
T\bigl(\d_t(\varrho s)+\div(\varrho s u)\bigr)=\bftau\cdot D( u)-\div q.
\end{equation}
For  the entropy inequality  to be satisfied, a necessary and sufficient condition is thus
$$\bftau:D( u)+k\frac{|\nabla T|^2}{T}\geq0,$$
which yields the following  constraints on $\lambda,$ $\mu$ and $k$:
$$
k\geq0,\quad\mu\geq 0\quad\hbox{and}\quad
2\mu+d\lambda\geq0.
$$
In order to close System \eqref{eq:fluid} which is  composed of  $d+2$ equations for $d+4$ unknowns (namely $\varrho,$ $e,$ $p,$ $T$ and $ u^1,\cdots, u^d$), we need another two  {\it state equations}
interrelating  $p$,~$\varrho$,~$e$,~$s$ and~$T.$
In this survey, for simplicity  we shall  focus on \emph{barotropic gases} that is $p$ depends only on the density and 
$\lambda$ and $\mu$ are independent of $T.$ Therefore the system constituted by the first
two equations in \eqref{eq:fluid},  the so-called 
  {\it barotropic compressible Navier-Stokes system}~:
\begin{equation}\label{eq:nsbaro}
\left\{\begin{array}{l}
\partial_t\varrho+\div(\varrho u)=0,\\
\partial_t(\varrho u)+\div(\varrho u\otimes u)-\div\bigl(2\mu D(u)+\lambda\,\div u\, \Id\bigr)+\nabla p= 0
\end{array}\right.
\end{equation}
where  $p\eqdefa P(\varrho)$ for some given  smooth function $P,$  is closed.
\bigbreak
Our main goal is to solve the Initial Value Problem (or Cauchy Problem) for \eqref{eq:nsbaro}
supplemented with initial data $(\varrho_0,u_0)$ at time $t=0$ \emph{in the case where the fluid domain $\Omega$ is the whole space or the torus.}
We will  concentrate on the   local well-posedness issue for large data with no vacuum, 
on   the global well-posedness issue for small perturbations of a constant stable equilibrium, and will give exhibit some of the qualitative properties
of the constructed solutions.
As regards global results,  the concept of \emph{critical regularity} is fundamental. 
Indeed, experience shows that whenever the PDE system under consideration 
possesses some scaling invariance with respect to space and time dilations (which is in general the case when it comes from mathematical physics) then appropriate so-called critical norms or quantities essentially control the (possible) finite time blow-up and the asymptotic properties of the solutions.  
\medbreak
 If modifying the pressure law accordingly then the barotropic system \eqref{eq:nsbaro} we are here considering has the following  scaling invariance:
 \begin{equation}\label{scaling}
\varrho(t,x)\leadsto \varrho(\ell ^2t,\ell x),\qquad
u(t,x)\leadsto \ell u(\ell^2t,\ell x), \qquad\ell>0.
\end{equation}
More precisely, $(\varrho,u)$ is a solution to \eqref{eq:nsbaro} if and only if so does  $(\varrho_\ell,u_\ell),$ with pressure function $\ell^2 P.$
This means that we expect optimal solution spaces (and norms) for \eqref{eq:nsbaro} to have the
scaling invariance pointed out above.
\medbreak
The rest of these notes unfolds as follows. 
In the next section, we present the basic tools and estimates that will be needed  to 
study System \eqref{eq:nsc}. 
Then we concentrate on  the local well-posedness issue for \eqref{eq:nsc} in critical spaces.
Section \ref{s:global} is dedicated to solving \eqref{eq:nsbaro} globally for small data. 
The last section  is devoted to asymptotic results for  the system.
 We concentrate on the low Mach number limit and
on the decay rates of global solutions in the critical regularity framework.


\section{The Fourier analysis toolbox}

We here shortly introduce  the  Fourier analysis tools needed
in this survey, then  state estimates for the heat and transport equations that will play
a fundamental role.
 For the sake of conciseness, some proofs are just sketched or omitted. Unless otherwise specified, 
the reader will find details in \cite{BCD}, Chap. 2 or 3.

\subsection{The Littlewood-Paley decomposition}

The Littlewood-Paley decomposition  is a  dyadic localization procedure in the frequency space
for tempered distributions over $\R^d.$
One of the main motivations for  using it when dealing with PDEs   is that the derivatives
 act almost as   dilations  on distributions with  Fourier
transform  supported in  a ball or an annulus, as regards $L^p$ norms. 
This is exactly what is stated in the following proposition:
 \begin{proposition}[Bernstein inequalities] Let $0<r<R.$
 \begin{itemize}
\item  There exists a constant~$C$ so that, for any $k\in\N$, couple~$(p,q)$ 
in~$[1,\infty]^2$ with  $q\geq p\geq 1$ 
and  function $u$ of~$L^p$ with $\wh u$ supported in the  ball $B(0,\lambda R)$ of $\R^d$ for some $\lambda>0,$ we have
$D^ku\in L^q$ and 
$$
\|D^k u\|_{L^q} \leq  C^{k+1}\lambda^{k+d(\frac{1}{p}-\frac{1}{q})}\|u\|_{L^p}.
 $$
\item  There exists a constant~$C$ so that
for any  $k\in\N,$ $p\in[1,\infty]$ and  function $u$ of $L^p$ with 
$\Supp\,\hat u \subset \{\xi\in\R^d\,/\, r\lambda\leq|\xi|\leq R\lambda\}$
for some $\lambda>0,$ we have
$$
\lambda^k\|u\|_{L^p}
\leq C^{k+1}\|D^k u\|_{L^p}.
$$
\end{itemize}
\end{proposition}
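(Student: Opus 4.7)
The plan is to treat the two inequalities by closely related Fourier-multiplier arguments. For the first, I would pick a cutoff $\chi\in C_c^\infty(\R^d)$ with $\chi\equiv 1$ on $\overline{B(0,R)}$, and set $h\eqdefa\mathcal{F}^{-1}\chi$ and $h_\lambda(x)\eqdefa\lambda^d h(\lambda x)$. Since $\hat u$ is supported in $B(0,\lambda R)$, we have $\chi(\cdot/\lambda)\hat u=\hat u$, hence $u=h_\lambda\ast u$. Commuting derivatives with convolution yields $D^ku=(D^kh_\lambda)\ast u$, and Young's inequality with the exponent $r$ defined by $1+1/q=1/p+1/r$ gives $\|D^ku\|_{L^q}\le\|D^kh_\lambda\|_{L^r}\|u\|_{L^p}$. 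A change of variables shows $\|D^kh_\lambda\|_{L^r}=\lambda^{k+d(1/p-1/q)}\|D^kh\|_{L^r}$, so the proof reduces to the Schwartz-type bound $\|D^kh\|_{L^r}\le C^{k+1}$.

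For the second inequality, since $\hat u$ lives in an annulus bounded away from zero, the idea is to invert the order-$k$ derivatives on the Fourier side. Fix $\tilde\chi\in C_c^\infty(\R^d\setminus\{0\})$ equal to $1$ on $\{r\le|\xi|\le R\}$, so that $\hat u=\tilde\chi(\cdot/\lambda)\hat u$. Introduce a smooth partition of unity $\sum_{j=1}^d\phi_j=1$ on $\Supp\tilde\chi$ with each $\phi_j$ supported where $|\xi_j|$ is a definite fraction of $|\xi|$; then the symbol $m_j(\xi)\eqdefa\tilde\chi(\xi)\phi_j(\xi)/(i\xi_j)^k$ belongs to $C_c^\infty$. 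Scaling gives $\tilde\chi(\xi/\lambda)\phi_j(\xi/\lambda)=\lambda^{-k}(i\xi_j)^km_j(\xi/\lambda)$, so $\hat u(\xi)=\sum_j\lambda^{-k}m_j(\xi/\lambda)\,\widehat{\d_j^ku}(\xi)$. Inverting the Fourier transform gives $u=\sum_j\lambda^{-k}(m_j)_\lambda\ast\d_j^ku$ with $(m_j)_\lambda\eqdefa\lambda^d\check m_j(\lambda\,\cdot)$; since $\|(m_j)_\lambda\|_{L^1}=\|\check m_j\|_{L^1}$, Young's inequality yields the desired $\lambda^k\|u\|_{L^p}\le C\|D^ku\|_{L^p}$.

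The main obstacle in both parts is not the structural argument but the sharp tracking of the $C^{k+1}$ dependence as $k$ grows. In part (i) one must control $\|D^kh\|_{L^r}$ for a fixed Schwartz function $h$; the cleanest route is to observe that $\widehat{D^kh}=(i\xi)^k\chi$ is supported in a fixed compact set and to apply enough integration-by-parts to obtain $L^1$ decay, which ultimately produces at worst exponential growth in $k$. In part (ii) the symbol $m_j$ itself depends on $k$ through the factor $(i\xi_j)^{-k}$, and since $|\xi_j|$ stays uniformly bounded below on $\Supp(\tilde\chi\phi_j)$, the Leibniz rule controls finitely many derivatives of $m_j$ and hence $\|\check m_j\|_{L^1}$ up to the expected exponential factor $C^{k+1}$. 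Once these kernel estimates are secured, both inequalities fall out at once.
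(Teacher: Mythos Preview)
The paper does not actually prove this proposition: it is stated at the opening of Section~2 and the reader is referred to \cite{BCD}, Chap.~2, for details. Your proposal is correct and is essentially the standard argument found there: reproduce $u$ as a convolution with a dilated Schwartz kernel, differentiate the kernel, and apply Young's inequality together with the scaling identity $\|D^kh_\lambda\|_{L^r}=\lambda^{k+d(1/p-1/q)}\|D^kh\|_{L^r}$. Your treatment of the reverse inequality via a conical partition of unity isolating each coordinate $\xi_j$ is a legitimate variant; the version in \cite{BCD} instead exploits the algebraic identity $|\xi|^{2k}=\sum_{|\alpha|=k}\frac{k!}{\alpha!}\xi^{2\alpha}$ to build the inverting multiplier directly, but both routes lead to the same kernel estimate. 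Your closing remarks on securing the $C^{k+1}$ growth by bounding a fixed number of $\xi$-derivatives of the relevant compactly supported symbols are accurate and suffice to finish the argument.
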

As general solutions to nonlinear PDE's need not  be spectrally localized in annuli,  we want 
 a device  for splitting any function or distribution into a sum of spectrally localized functions.
 To this end,   fix some  smooth radial non increasing function $\chi$
with ${\rm Supp}\, \chi\subset B(0,\frac 43)$ and 
$\chi\equiv1$ on $B(0,\frac34),$ then set
$\varphi(\xi)=\chi(\xi/2)-\chi(\xi).$ We thus have 
$$
\sum_{j\in\Z}\varphi(2^{-j}\cdot)=1\ \hbox{ in }\ \R^d\setminus\{0\}\cdotp
$$
The homogeneous  dyadic blocks  $\ddj$ are defined by 
$$\ddj u\eqdefa\varphi(2^{-j}D)u\eqdefa\cF^{-1}(\varphi(2^{-j}\cdot)\cF u)=2^{jd}h(2^j\cdot)\star u
\quad\hbox{with}\quad h\eqdefa\cF^{-1}\varphi.
$$
We also introduce  the low frequency cut-off operator $\dot S_j$:
$$
\dot S_ju\eqdefa\chi(2^{-j}D)u\eqdefa\cF^{-1}(\chi(2^{-j}\cdot)\cF u)=2^{jd}\check h(2^j\cdot)\star u
\quad\hbox{with}\quad \check h\eqdefa\cF^{-1}\chi.
$$
Let us emphasize that operators $\ddj$ and $\dot S_j$ are continuous 
on $L^p,$ with norm \emph{independent} of $j,$ a property that would fail if taking a rough cut-off
function $\chi$ (unless $p=2$ of course).
The price to pay for smooth cut-off is that  $\ddj$ is not an  $L^2$ orthogonal projector. 
However the following important quasi-orthogonality property is fulfilled:
\begin{equation}\label{eq:quasi1}
\ddj\ddk=0\ \hbox{ if }\  |j-k|>1.
\end{equation}
The homogeneous   Littlewood-Paley decomposition for $u$ reads
\begin{equation}\label{eq:LP}
u=\sum_{j}\ddj u.
\end{equation}
This equality holds  \emph{modulo polynomials} only.
In order to have equality in the distributional sense, one may consider 
  the set  $\cS'_h$ of tempered distributions $u$ such that 
$$
\lim_{j\rightarrow-\infty}\|\dot S_ju\|_{L^\infty}=0.
$$
As  distributions of $\cS'_h$  tend to $0$ at infinity, one can easily conclude that 
 \eqref{eq:LP} holds true  in $\cS'$ whenever $u$ is in $\cS'_h.$


\subsection{Besov spaces} 

It is obvious that for all $s\in\R,$ we have 
\begin{equation}\label{eq:b1}
C^{-1}\|u\|_{\dot H^s}^2\leq\sum_{j\in\Z} 2^{2js}\|\ddj u\|_{L^2}^2\leq C\|u\|_{\dot H^s}^2,
\end{equation}
and it is  also not very difficult to prove that for $s\in(0,1),$
\begin{equation}\label{eq:b2}
C^{-1}\|u\|_{\dot C^{0,s}}\leq \sup_{j\in\Z} 2^{js}\|\ddj u\|_{L^\infty}\leq C\|u\|_{\dot C^{0,s}},\qquad s\in(0,1).
\end{equation}
In \eqref{eq:b1} and \eqref{eq:b2}, we observe  that three parameters come into play: 
the regularity  parameter $s,$ the Lebesgue exponent that is used for bounding $\ddj u$
and the type of summation that is done over $\Z.$
This  motivates the following definition: 
\begin{definition}
 For $s\in\R$ and  
$1\leq p,r\leq\infty,$ we set
$$
\|u\|_{\dot B^s_{p,r}}\eqdefa\bigg(\sum_{j\in\Z} 2^{rjs}
\|\ddj  u\|^r_{L^p}\bigg)^{\frac{1}{r}}\ \text{ if }\ r<\infty
\quad\!\text{and}\quad\!
\|u\|_{\dot B^s_{p,\infty}}\eqdefa\sup_{j\in\Z} 2^{js}\|\ddj  u\|_{L^p}.
$$
We then define the homogeneous Besov space $\dot B^s_{p,r}$ to be the 
subset of  distributions $u\in {\cS}'_h$ such  that
$\|u\|_{\dot B^s_{p,r}}<\infty.$
\end{definition}
We shall often use the following classical  properties: 
\begin{itemize}
\item \emph{Scaling invariance}: 
For any $s\in\R$ and $(p,r)\in[1,+\infty]^2$ there exists a constant $C$ such that
for all  $\lambda>0$ and $u\in\dot B^s_{p,r},$ we have
\begin{equation}\label{eq:scaling}
C^{-1}\lambda^{s-\frac dp} \|u\|_{\dot B^s_{p,r}}\leq 
\|u(\lambda\cdot)\|_{\dot B^s_{p,r}}\leq C\lambda^{s-\frac dp}  \|u\|_{\dot B^s_{p,r}}.
\end{equation}
\item \emph{Completeness}: $\dot B^s_{p,r}$ is a Banach space whenever $s<d/p$ or $s\leq d/p$ and $r=1.$
\item \emph{Fatou property}: if $(u_n)_{n\in\N}$ is a bounded sequence of functions of $\dot B^s_{p,r}$
that converges in  $\cS'$  to some $u\in\cS'_h$ 
then $u\in \dot B^s_{p,r}$ and 
$\|u\|_{\dot B^s_{p,r}}\leq C\liminf \|u_n\|_{\dot B^s_{p,r}}.$
\item \emph{Duality}:  If $u$ is  in~$\cS'_h$ then  we have
$$
\|u\|_{\dot B^s_{p,r}} \leq C \sup_\phi \langle u,\phi\rangle
$$
where the supremum is taken over  those~$\phi$ in~$\cS\cap \dot B^{-s}_{p',r'}$
 such that~$\|\phi\|_{\dot B^{-s}_{p',r'}} \leq 1.$
  \item \emph{Interpolation}: The following inequalities are  satisfied for\footnote{With the convention that
  $A\lesssim B$ means that $A\leq CB$ for some `harmless' positive constant $C.$}
  all $1\leq p,r_1,r_2,r\leq\infty,$ $s_1\not=s_2$ and $\theta\in(0,1)$:
  $$
  \|u\|_{\dot B^{\theta s_2+(1-\theta)s_1}_{p,r}}\lesssim\|u\|_{\dot B^{s_1}_{p,r_1}}^{1-\theta}
  \|u\|_{\dot B^{s_2}_{p,r_2}}^\theta.
  $$  
  \item \emph{Action of Fourier multipliers}: If $F$ is a smooth homogeneous  of degree $m$ function on $\R^d\setminus\{0\}$ then 
\begin{equation}\label{eq:fm}
F(D):\dot B^s_{p,r}\to\dot B^{s-m}_{p,r}.\end{equation}
 In particular, the gradient operator maps $\dot B^s_{p,r}$
 in $\dot B^{s-1}_{p,r}.$  
   \end{itemize}
\begin{proposition}[Embedding for Besov spaces on $\R^d$]
\begin{enumerate}
\item For any $p\in[1,\infty]$ we have the  continuous embedding
$
\dot B^0_{p,1}\hookrightarrow L^p\hookrightarrow \dot B^0_{p,\infty}.$
\item 
If  $s\in\R,$ $1\leq p_1\leq p_2\leq\infty$ and $1\leq r_1\leq r_2\leq\infty,$
  then $\dot B^{s}_{p_1,r_1}\hookrightarrow
  \dot B^{s-d(\frac1{p_1}-\frac1{p_2})}_{p_2,r_2}.$
  \item   For $s'<s$ and any  $1\leq p, r_1, r_2\leq\infty,$
  the embedding  of $\dot B^s_{p,r_1}$ in $\dot B^{s'}_{p,r_2}$   
  is locally compact, i.e.   for any $\varphi\in\cS,$
  the map $u\mapsto\varphi u$ is compact from   $\dot B^{s}_{p_1,r_1}$ to  $\dot B^{s'-d(\frac1{p_1}-\frac1{p_2})}_{p_2,r_2}.$  
  \item The space  $\dot B^{\frac dp}_{p,1}$ is continuously embedded in   the set  of
bounded  continuous functions (going to $0$ at infinity if, additionally,   $p<\infty$).
  \end{enumerate}
  \end{proposition}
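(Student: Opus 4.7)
The plan is to reduce every statement to estimates on dyadic blocks, combining Bernstein's inequalities with the summability encoded in the Besov norm.

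For part (1), since $u\in\cS'_h$, the decomposition $u=\sum_j\ddj u$ holds in $\cS'$. If $u\in\dot B^0_{p,1}$, the partial sums of this series are Cauchy in $L^p$, so it converges absolutely in $L^p$ to a limit which must coincide with $u$, yielding $\|u\|_{L^p}\leq\|u\|_{\dot B^0_{p,1}}$. Conversely, $\ddj$ is convolution with the $L^1$ kernel $2^{jd}h(2^j\cdot)$ whose $L^1$ norm is independent of $j$, so Young's inequality gives $\sup_j\|\ddj u\|_{L^p}\lesssim\|u\|_{L^p}$. Part (4) is similar: the first Bernstein inequality yields $\|\ddj u\|_{L^\infty}\leq C 2^{jd/p}\|\ddj u\|_{L^p}$, hence $\sum_j\|\ddj u\|_{L^\infty}\lesssim\|u\|_{\dot B^{d/p}_{p,1}}$ and the series converges uniformly; since each block is smooth, $u$ is continuous and bounded. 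When $p<\infty$, each smooth $\ddj u\in L^p$ has bounded derivatives of all orders (by Bernstein again), which forces $\ddj u\in C_0(\R^d)$, and uniform convergence propagates this to $u$.

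For part (2), applying the first Bernstein inequality to the spectrally localized block $\ddj u$ gives $\|\ddj u\|_{L^{p_2}}\leq C\,2^{jd(1/p_1-1/p_2)}\|\ddj u\|_{L^{p_1}}$. Multiplying by $2^{j(s-d(1/p_1-1/p_2))}$ and using the continuous embedding $\ell^{r_1}(\Z)\hookrightarrow\ell^{r_2}(\Z)$ for $r_1\leq r_2$ then yields the claimed inclusion.

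Part (3) is the main obstacle. By composing with the embedding of (2) it is enough to handle $p_1=p_2=p$, so fix $\varphi\in\cS$ and a bounded sequence $(u_n)$ in $\dot B^s_{p,r_1}$ and extract a subsequence of $(\varphi u_n)$ that is Cauchy in $\dot B^{s'}_{p,r_2}$. For any $N\in\N$, split
$$
\varphi u_n=\sum_{j<-N}\ddj(\varphi u_n)+\sum_{|j|\leq N}\ddj(\varphi u_n)+\sum_{j>N}\ddj(\varphi u_n).
$$
A product estimate (multiplication by a Schwartz function is bounded on $\dot B^s_{p,r_1}$) gives $\|\varphi u_n\|_{\dot B^s_{p,r_1}}\lesssim\|u_n\|_{\dot B^s_{p,r_1}}$, so the high-frequency tail is controlled in $\dot B^{s'}_{p,r_2}$ by $C\,2^{-N(s-s')}$, uniformly small in $n$. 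The low-frequency tail is bounded via Bernstein and the rapid spatial decay of $\varphi$ by a quantity tending to $0$ as $N\to\infty$ uniformly in $n$. For the remaining finite range $|j|\leq N$, the localization of $\varphi$ effectively confines $\varphi u_n$ to a compact set, while iterated Bernstein on the spectrally truncated piece provides uniform $C^k$ bounds; the Arzel\`a--Ascoli theorem then yields a subsequence converging in $L^p$, hence in $\dot B^{s'}_{p,r_2}$ for this restricted band. A diagonal extraction in $N$ produces the desired Cauchy subsequence. The two delicate points are (a) justifying the product estimate that puts $\varphi u_n$ back into $\dot B^s_{p,r_1}$ (which uses paraproducts together with the fact that $\varphi\in\cS$), and (b) arranging the Arzel\`a--Ascoli extractions to be compatible as $N$ grows.
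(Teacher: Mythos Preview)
The paper does not supply a proof of this proposition; it is stated as background with a reference to \cite{BCD}, Chap.~2--3, so there is nothing in-paper to compare your argument against directly.

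Your treatments of (1), (2) and (4) are correct and standard. For (3), the frequency-truncation-plus-Arzel\`a--Ascoli outline is the right one, but the two points you flag as ``delicate'' are genuine gaps rather than routine bookkeeping. The claim that multiplication by $\varphi\in\cS$ is bounded on the \emph{homogeneous} space $\dot B^s_{p,r_1}$ is not true in general: the pieces $T_u\varphi$ and $R(u,\varphi)$ can fail at low frequencies, since a Schwartz function does not belong to every homogeneous Besov space (for instance $\varphi\notin\dot B^\sigma_{\infty,\infty}$ once $\sigma\leq -d$). Fortunately you only need a uniform bound on $\varphi u_n$ in the \emph{target} $\dot B^{s'}_{p,r_2}$, which is weaker---but it still has to be argued, not assumed. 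More seriously, ``Bernstein plus rapid decay of $\varphi$'' does not by itself control the low-frequency tail $\sum_{j<-N}2^{js'}\|\ddj(\varphi u_n)\|_{L^p}$ uniformly in $n$: to make this quantitative you need a uniform $L^q$ bound on $\varphi u_n$ for some $q\leq p$ with $s'>d/p-d/q$, and producing such a bound from $u_n\in\dot B^s_{p,r_1}$ alone requires a separate paraproduct argument exploiting that $\varphi$ lies in $\dot B^\sigma_{q,1}$ for a whole range of $(\sigma,q)$. None of this is insurmountable, but it is precisely where the work in part (3) lies; the full details are carried out in \cite{BCD}.
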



\subsection{Paraproduct and nonlinear estimates}

 Formally, the  product  of two  tempered distributions $u$ and $v$   may be decomposed
into 
\begin{equation}\label{eq:bony}
uv=T_uv+R(u,v)+T_vu\end{equation}
with
$$T_uv\eqdefa\sum_j\dot S_{j-1}u\,\ddj v
\ \hbox{ and }\ 
R(u,v)\eqdefa\sum_j\sum_{|j'-j|\leq1}\ddj u\,\dot\Delta_{j'}v.
$$
The above operator $T$ is called ``paraproduct'' whereas
$R$ is called ``remainder''.
The decomposition \eqref{eq:bony} has been first introduced 
by J.-M. Bony in \cite{Bony}.   We observe that  in Fourier variables the sum in $T_uv$
 is \emph{locally finite}, hence  $T_uv$ is  always defined. We shall see however that  it cannot be smoother than what is given by high frequencies,
namely $v.$
As for the remainder, it may be not defined, but if it is  then 
the regularity exponents add up. 
All that is detailed below:
\begin{proposition}\label{p:op}
Let $(s,r)\in\R\times[1,\infty]$  and $1\leq p,p_1,p_2\leq\infty$ with  $1/p=1/p_1+1/p_2.$
\begin{itemize}
\item We have:
$$
\|T_uv\|_{\dot B^s_{p,r}}\lesssim \|u\|_{L^{p_1}}\|v\|_{\dot B^{s}_{p_2,r}}\quad\hbox{and}\quad
\|T_uv\|_{\dot B^{s+t}_{p,r}}\lesssim \|u\|_{\dot B^t_{p_1,\infty}}\|v\|_{\dot B^{s}_{p_2,r}},\quad
\hbox{if }\ t<0.
$$
\item If
$s_1+s_2>0$ and $1/r=1/r_1+1/r_2\leq1$ then
 $$\|R(u,v)\|_{\dot B^{s_1+s_2}_{p,r}}\lesssim\|u\|_{\dot B^{s_1}_{p_1,r_1}}
\|v\|_{\dot B^{s_2}_{p_2,r_2}}.$$
\item If $s_1+s_2=0$  and $1/r_1+1/r_2\geq1$ then
 $$\|R(u,v)\|_{\dot B^{0}_{p,\infty}}\lesssim\|u\|_{\dot B^{s_1}_{p_1,r_1}}
\|v\|_{\dot B^{s_2}_{p_2,r_2}}.$$
\end{itemize}
\end{proposition}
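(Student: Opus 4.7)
The three estimates share a common architecture: spectral localization of each summand, Bernstein and Hölder in $L^p$, followed by a sequence--space estimate in $\ell^r(\Z)$. What differs across the cases is only how one assembles the sequence estimate.

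For the paraproduct $T_uv,$ I would first observe that $\dot S_{j-1}u$ is spectrally supported in a ball of radius $\sim 2^{j-2}$ while $\ddj v$ is supported in an annulus $\{c2^j\leq|\xi|\leq C2^j\},$ so $\dot S_{j-1}u\,\ddj v$ lives in an annulus of size $\sim 2^j.$ Hence there is a fixed $N_0$ with $\ddk(\dot S_{j-1}u\,\ddj v)\equiv 0$ whenever $|k-j|>N_0,$ and combining the uniform bound $\|\dot S_{j-1}u\|_{L^{p_1}}\lesssim\|u\|_{L^{p_1}}$ with Hölder yields
$$
2^{ks}\|\ddk T_uv\|_{L^p}\lesssim\|u\|_{L^{p_1}}\!\!\!\sum_{|j-k|\leq N_0}\!\! 2^{ks}\|\ddj v\|_{L^{p_2}}.
$$
Taking the $\ell^r$ norm in $k$ (discrete Young with a kernel supported on $\{|k-j|\leq N_0\}$) gives the first inequality. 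For the second, since $t<0$ the geometric argument $\|\dot S_{j-1}u\|_{L^{p_1}}\leq\sum_{j'\leq j-2}\|\dot\Delta_{j'}u\|_{L^{p_1}}\lesssim 2^{-jt}\|u\|_{\dot B^t_{p_1,\infty}}$ supplies the exponential gain that absorbs into the $2^{k(s+t)}$ weight.

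For the remainder, the crucial change is that $\ddj u\,\dot\Delta_{j'}v$ (with $|j-j'|\leq 1$) is spectrally supported in a \emph{ball} of radius $\sim 2^j,$ so $\ddk$ kills it only when $k>j+N_0.$ Setting $R_j\eqdefa\ddj u\sum_{|j'-j|\leq 1}\dot\Delta_{j'}v,$ Hölder gives $2^{j(s_1+s_2)}\|R_j\|_{L^p}\lesssim a_jb_j$ where $a_j\eqdefa 2^{js_1}\|\ddj u\|_{L^{p_1}}\in\ell^{r_1}$ and $b_j\eqdefa 2^{js_2}\|\widetilde{\dot\Delta}_jv\|_{L^{p_2}}\in\ell^{r_2}.$ When $s_1+s_2>0,$ I would estimate
$$
2^{k(s_1+s_2)}\|\ddk R(u,v)\|_{L^p}\lesssim\sum_{j\geq k-N_0}2^{(k-j)(s_1+s_2)}a_jb_j,
$$
where the kernel $m\mapsto 2^{m(s_1+s_2)}\,$ for $m\leq N_0$ lies in $\ell^1(\Z),$ so discrete Young combined with Hölder in the $\ell^{r_1},\ell^{r_2}$ factors (legitimate since $1/r=1/r_1+1/r_2\leq 1$) concludes. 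When $s_1+s_2=0$ the geometric gain vanishes and one can only target the $\ell^\infty$ output:
$$
\|\ddk R(u,v)\|_{L^p}\leq\sum_{j\geq k-N_0}a_jb_j\leq\sum_{j\in\Z}a_jb_j\lesssim\|a\|_{\ell^{r_1}}\|b\|_{\ell^{r_2}},
$$
the final step being Hölder on $\Z,$ which is available precisely under $1/r_1+1/r_2\geq 1$ via the embedding $\ell^{r_2}\hookrightarrow\ell^{r'_1}.$

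The main conceptual obstacle is not a single delicate estimate but recognising why the thresholds $s_1+s_2>0$ versus $s_1+s_2=0$ demand qualitatively different summation strategies: a geometric series in the former, pure Hölder duality in the latter, which explains simultaneously the loss from $\dot B^{s_1+s_2}_{p,r}$ to $\dot B^0_{p,\infty}$ and the reversed constraint $1/r_1+1/r_2\geq 1$ at the endpoint. The remaining work is bookkeeping: tracking the finite shift $N_0$ through every discrete Young/Hölder application and checking that $T_uv$ and $R(u,v)$ are genuinely realized in $\cS'_h,$ so that the Besov norms on the left-hand sides are well defined.
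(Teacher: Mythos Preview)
Your argument is correct and follows the standard route: spectral localization of each block, H\"older in $L^p$, then the appropriate $\ell^r$ summation (discrete Young for $T_uv$ and for $R(u,v)$ with $s_1+s_2>0$, pure H\"older duality at the endpoint $s_1+s_2=0$). The paper itself omits the proof of this proposition and refers the reader to \cite{BCD}, Chap.~2, where exactly this architecture is carried out; your sketch matches that treatment in all essential respects.
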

Putting together decomposition \eqref{eq:bony} and the above 
results, one may    get the following product estimate that  depends only \emph{linearly} 
on the highest norm of $u$ and $v$:
\begin{corollary}\label{c:op}
Let $u$ and $v$ be in $L^\infty\cap \dot B^{s}_{p,r}$ for some $s>0$
and $(p,r)\in[1,\infty]^2.$  Then there exists a constant $C$ depending only on $d,$ $p$ and $s$
and such that 
$$
\|uv\|_{\dot B^s_{p,r}}\leq C\bigl(\|u\|_{L^\infty}\|v\|_{\dot B^s_{p,r}}+\|v\|_{L^\infty}\|u\|_{\dot B^{s}_{p,r}}\bigr).
$$
\end{corollary}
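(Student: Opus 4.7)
The plan is to apply Bony's decomposition \eqref{eq:bony} and estimate each of the three pieces $T_uv$, $T_vu$ and $R(u,v)$ by means of Proposition \ref{p:op}, together with the trivial observation that $\|\dot\Delta_j w\|_{L^\infty}\lesssim\|w\|_{L^\infty}$ uniformly in $j$, so that in particular $\|w\|_{\dot B^0_{\infty,\infty}}\lesssim\|w\|_{L^\infty}$.

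For the two paraproduct pieces, I would invoke the first inequality of Proposition \ref{p:op} with the splitting $1/p=1/\infty+1/p$. This directly yields
$$
\|T_uv\|_{\dot B^s_{p,r}}\lesssim \|u\|_{L^\infty}\|v\|_{\dot B^s_{p,r}}
\quad\hbox{and}\quad
\|T_vu\|_{\dot B^s_{p,r}}\lesssim \|v\|_{L^\infty}\|u\|_{\dot B^s_{p,r}},
$$
which is the desired form. Note that $s$ plays no role here: the paraproduct estimate is available for any real regularity exponent.

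The only piece where the assumption $s>0$ is genuinely needed is the remainder $R(u,v)$. Here I would apply the second item of Proposition \ref{p:op} with $(s_1,s_2)=(0,s)$, $(p_1,p_2)=(\infty,p)$ and $(r_1,r_2)=(\infty,r)$. The hypotheses $s_1+s_2=s>0$ and $1/r=1/r_1+1/r_2$ with $1/r_1+1/r_2=1/r\leq1$ are satisfied, and we get
$$
\|R(u,v)\|_{\dot B^s_{p,r}}\lesssim\|u\|_{\dot B^0_{\infty,\infty}}\|v\|_{\dot B^s_{p,r}}\lesssim\|u\|_{L^\infty}\|v\|_{\dot B^s_{p,r}}.
$$
Summing the three contributions and taking $C$ to be the largest of the implicit constants yields the claim.

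There is no real obstacle; the only subtlety is the need to absorb $\|u\|_{\dot B^0_{\infty,\infty}}$ into $\|u\|_{L^\infty}$ in the last step, which relies precisely on the uniform $L^\infty$-continuity of the blocks $\dot\Delta_j$ already stressed in the text. Equivalently, one may prove the remainder bound by hand, noticing that the spectrum of $\dot\Delta_j u\,\dot\Delta_{j'}v$ lies in a ball of radius $\sim 2^{\max(j,j')}$, bounding $\|\dot\Delta_j u\|_{L^\infty}\lesssim\|u\|_{L^\infty}$ and $\|\dot\Delta_{j'}v\|_{L^p}\lesssim 2^{-j's}c_{j'}$ with $(c_j)\in\ell^r$, and then concluding by a discrete Young-type convolution inequality that converges thanks to $s>0$.
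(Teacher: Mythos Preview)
Your proof is correct and follows exactly the route the paper indicates: apply Bony's decomposition \eqref{eq:bony} and bound each piece via Proposition~\ref{p:op}, using $L^\infty\hookrightarrow\dot B^0_{\infty,\infty}$ for the remainder. The paper does not spell out the details beyond this, so your argument is precisely the intended one.
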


\begin{remark}\label{r:algebra}
 Because  $\dot B^{\frac dp}_{p,1}$
is  embedded in $L^\infty,$ we deduce that whenever  $p<+\infty,$
  the product of two functions in   $\dot B^{\frac dp}_{p,1}$
is also in  $\dot B^{\frac dp}_{p,1}$ and that  for some constant $C=C(p,d)$: 
$$
\|uv\|_{\dot B^{\frac dp}_{p,1}}\leq C\|u\|_{\dot B^{\frac dp}_{p,1}}\|v\|_{\dot B^{\frac dp}_{p,1}}.
$$
\end{remark}
Let us finally state a composition result.
\begin{proposition}\label{p:comp}
Let $F:\R\rightarrow\R$ be  smooth 
with $F(0)=0.$ 
For  all  $1\leq p,r\leq\infty$ and  $s>0$ we have
$F(u)\in \dot B^s_{p,r}\cap L^\infty$  for  $u\in \dot B^s_{p,r}\cap L^\infty,$  and   
\begin{equation}\label{eq:comp}
\|F(u)\|_{\dot B^s_{p,r}}\leq C\|u\|_{\dot B^s_{p,r}}
\end{equation}
with $C$ depending only on $\|u\|_{L^\infty},$ $F'$ (and higher derivatives),  $s,$ $p$ and $d.$ 
\end{proposition}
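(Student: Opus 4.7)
The plan is to use Meyer's telescoping identity combined with a frequency decomposition. Writing $\dot S_{j+1}=\dot S_j+\ddj$ and invoking the mean value theorem pointwise yields
$$F(\dot S_{j+1}u)-F(\dot S_j u) = m_j\,\ddj u, \qquad m_j:=\int_0^1 F'\bigl(\dot S_j u+\tau\,\ddj u\bigr)\,d\tau.$$
Since $u\in\cS'_h\cap L^\infty$, Bernstein gives $\|\dot S_j u\|_{L^\infty},\|\ddj u\|_{L^\infty}\lesssim \|u\|_{L^\infty}$ uniformly in $j$, hence $\|m_j\|_{L^\infty}\leq M$ with $M$ depending only on $\|u\|_{L^\infty}$ and $F'$. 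Summing the telescoping identity, using that $\dot S_j u\to 0$ in $L^\infty$ as $j\to-\infty$ (the defining property of $\cS'_h$) and that $\dot S_j u\to u$ in $\cS'$ as $j\to+\infty$, together with $F(0)=0$ and the continuity of $F$, produces
$$F(u)=\sum_{j\in\Z} m_j\,\ddj u\qquad\text{in } \cS'.$$

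The core step is to bound $\ddk F(u)=\sum_j\ddk(m_j\,\ddj u)$ in $L^p$, which I would split at $j=k$. For $j\geq k$ the crude bound $\|\ddk(m_j\,\ddj u)\|_{L^p}\lesssim M\,\|\ddj u\|_{L^p}$ is enough, since after multiplying by $2^{ks}$ the contribution is controlled by the discrete convolution of $(2^{js}\|\ddj u\|_{L^p})_j$ with the kernel $2^{-(j-k)s}\mathbf{1}_{j\geq k}$, summable thanks to $s>0$. For $j<k$ I exploit that $m_j$ is ``smooth at scale $2^{-j}$'': Fa\`a di Bruno applied to the composition defining $m_j$, combined with Bernstein on each factor $D^\gamma\dot S_j u$ and $D^\gamma \ddj u$, yields
$$\|D^\alpha m_j\|_{L^\infty}\lesssim 2^{j|\alpha|}\qquad\text{for every multi-index }\alpha,$$
with constants depending only on $\|u\|_{L^\infty}$ and on derivatives of $F$ up to order $|\alpha|+1$. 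The Leibniz rule combined with Bernstein on the $D^\beta\ddj u$ factors then gives $\|D^N(m_j\,\ddj u)\|_{L^p}\lesssim 2^{jN}\|\ddj u\|_{L^p}$, and one more Bernstein inequality, applied to the annulus-localized block $\ddk$, delivers
$$\|\ddk(m_j\,\ddj u)\|_{L^p}\lesssim 2^{-N(k-j)}\,\|\ddj u\|_{L^p}.$$
Choosing $N>s$ makes the weighted sum for $j<k$ a convolution with an $\ell^1$ kernel as well.

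Putting the two halves together, $2^{ks}\|\ddk F(u)\|_{L^p}$ is dominated by a discrete convolution of $(2^{js}\|\ddj u\|_{L^p})_j$ with an $\ell^1(\Z)$ kernel; Young's inequality in $\ell^r$ then yields the target bound $\|F(u)\|_{\dot B^s_{p,r}}\leq C\,\|u\|_{\dot B^s_{p,r}}$ with $C$ of the stated form, while the accompanying $L^\infty$ bound on $F(u)$ follows from $F(0)=0$ and the mean value theorem. The main technical obstacle is the uniform-in-$j$ Fa\`a di Bruno estimate $\|D^\alpha m_j\|_{L^\infty}\lesssim 2^{j|\alpha|}$: every chain-rule term must be tracked so that each derivative falling on $\dot S_j u$ or $\ddj u$ contributes exactly one factor of $2^j$ via Bernstein, while the $F^{(k)}$-coefficients remain bounded uniformly in $j$ (but not in $\|u\|_{L^\infty}$).
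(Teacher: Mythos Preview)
The paper does not actually prove this proposition: it is stated as a standard composition result and, per the disclaimer at the start of Section~2, the reader is referred to \cite{BCD}, Chap.~2 for the details. Your argument via Meyer's telescoping identity $F(u)=\sum_j m_j\,\ddj u$ with $m_j=\int_0^1 F'(\dot S_j u+\tau\ddj u)\,d\tau$, followed by the near/far frequency splitting and the Fa\`a di Bruno bound $\|D^\alpha m_j\|_{L^\infty}\lesssim 2^{j|\alpha|}$, is correct and is precisely the standard proof found in that reference (this is Meyer's ``first linearization'').

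One small point worth tightening: the passage from $\dot S_j u\to u$ in $\cS'$ to $F(\dot S_j u)\to F(u)$ in $\cS'$ as $j\to+\infty$ is not purely a continuity-of-$F$ statement, since $F$ is nonlinear. It goes through because $(\dot S_j u)_j$ is uniformly bounded in $L^\infty$ and converges to $u$ pointwise a.e.\ (or uniformly on compacts for smooth $u$, then by density), so dominated convergence gives the limit after pairing with a test function. This is routine but should be mentioned in a fully rigorous write-up.
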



\subsection{Endpoint maximal regularity for the linear heat equation}

This paragraph is dedicated to \emph{maximal regularity} issues for  the basic heat equation
\begin{equation}\label{eq:heateq1}
\d_tu-\Delta u=f,\qquad u_{|t=0}=u_0.
\end{equation}
In the case $u_0\equiv0,$  we say that  the  functional space $X$ endowed
with norm $\|\cdot\|_X$  has the maximal regularity property if
\begin{equation}\label{eq:heat}
\|\d_tu,D^2_xu\|_{X}\leq C\|f\|_{X}.
\end{equation}
Fourier-Plancherel theorem implies that  \eqref{eq:heat} holds true for $X=L^2(\R_+\times\R^d).$
{}From more complicated tools based on singular integrals and heat kernel estimates, one may 
gather that  \eqref{eq:heat} is
true for $X=L^r(\R_+;L^p(\R^d))$ whenever $1<p,r<+\infty.$ However, the endpoint cases where one of the exponents $p$ or $r$
is $1$ or $+\infty$ are false. 

One of the keys to the approach presented in these notes  is that \eqref{eq:heat} is true for $X=L^1(\R_+;\dot B^s_{p,1}(\R^d)),$
 a consequence of the following lemma:
\begin{lemma}\label{l:heat} There exist two positive constants $c$ and $C$ such that for any $j\in\Z,$ $p\in[1,\infty]$ and $\lambda\in\R_+,$ we have for all $u\in\cS'$ with $\ddj u$ in $L^p$~:
$$\|e^{\lambda\Delta}\ddj u\|_{L^p}\leq Ce^{-c_0\lambda2^{2j}}\|\ddj u\|_{L^p}.$$
\end{lemma}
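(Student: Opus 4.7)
The strategy is to reduce the estimate to a convolution inequality, then exploit the spectral localization of $\ddj u$ together with a scaling argument. Since $\widehat{\ddj u}$ is supported in the annulus $\{2^{j-1}r' \le |\xi| \le 2^{j+1}R'\}$ for some $0<r'<R'$ (determined by the support of $\varphi$), I fix an auxiliary smooth function $\psi$ equal to $1$ on a neighborhood of $\mathrm{Supp}\,\varphi$ and supported in a slightly larger annulus $\{\eta\in\R^d : c_1\le|\eta|\le c_2\}$. Then, at the Fourier level,
\[
e^{\lambda\Delta}\ddj u \;=\; h_{\lambda,j}\star\ddj u, \qquad
\widehat{h_{\lambda,j}}(\xi)\;=\;\psi(2^{-j}\xi)\,e^{-\lambda|\xi|^2}.
\]
By Young's convolution inequality $\|e^{\lambda\Delta}\ddj u\|_{L^p}\le \|h_{\lambda,j}\|_{L^1}\|\ddj u\|_{L^p}$, so it suffices to establish
\[
\|h_{\lambda,j}\|_{L^1}\;\le\; C\,e^{-c_0\lambda 2^{2j}}.
\]

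Next, I reduce to a single parameter via the change of variables $\eta=2^{-j}\xi$. Setting $\mu\eqdefa\lambda 2^{2j}$ and $g_\mu\eqdefa\cF^{-1}\bigl(\psi(\eta)e^{-\mu|\eta|^2}\bigr)$, a direct computation shows $h_{\lambda,j}(x)=2^{jd}g_\mu(2^j x)$, so $\|h_{\lambda,j}\|_{L^1}=\|g_\mu\|_{L^1}$. Hence everything reduces to the uniform bound
\[
\|g_\mu\|_{L^1}\;\le\; C\,e^{-c_0\mu}\qquad\hbox{for all }\ \mu\ge0,
\]
with $c_0$ any number strictly less than $c_1^2$.

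To get this, I fix such a $c_0$ and factor out the expected decay by writing $\psi(\eta)e^{-\mu|\eta|^2}=e^{-c_0\mu}\,\tilde g_\mu(\eta)$ with $\tilde g_\mu(\eta)\eqdefa\psi(\eta)\exp\bigl(-\mu(|\eta|^2-c_0)\bigr)$. On $\mathrm{Supp}\,\psi$ we have $|\eta|^2-c_0\ge c_1^2-c_0>0$. Differentiating $\tilde g_\mu$ produces terms of the form $(\partial^\gamma\psi)(\eta)\,Q_{\mu,\beta}(\eta)\,e^{-\mu(|\eta|^2-c_0)}$, where $Q_{\mu,\beta}$ is a polynomial in $\eta$ whose coefficients are polynomials in $\mu$ of degree at most $|\beta|$; the factor $e^{-\mu(c_1^2-c_0)}\le 1$ absorbs every power of $\mu$ uniformly on $[0,\infty)$. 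Consequently each seminorm $\|\tilde g_\mu\|_{C^N}$ is bounded independently of $\mu$, and since $\tilde g_\mu$ is compactly supported, standard Schwartz-type decay via integration by parts (or equivalently $\|\cF^{-1}\tilde g_\mu\|_{L^1}\lesssim \|\tilde g_\mu\|_{H^s}$ for $s>d/2$) yields $\|\cF^{-1}\tilde g_\mu\|_{L^1}\le C$ uniformly in $\mu$. Combining the pieces gives $\|g_\mu\|_{L^1}\le Ce^{-c_0\mu}$ and concludes the proof.

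The only mildly delicate point is the third step: one must verify that differentiating $\exp(-\mu(|\eta|^2-c_0))$ cannot destroy the uniform-in-$\mu$ bound. This is handled by choosing $c_0$ strictly less than $\min_{\mathrm{Supp}\,\psi}|\eta|^2$ so that a small residual exponential decay remains available to kill the polynomial growth $\mu^{|\beta|}$ coming from the chain rule; apart from this bookkeeping, the argument is routine.
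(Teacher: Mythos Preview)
Your proof is correct and follows essentially the same route as the paper: reduce by scaling to the case $j=0$, write $e^{\lambda\Delta}\ddj u$ as convolution against a kernel with Fourier transform $\psi(\cdot)e^{-\mu|\cdot|^2}$, and bound the $L^1$ norm of that kernel by $Ce^{-c_0\mu}$ using smoothness and the support condition on $\psi$. The paper's version carries out the last step by an explicit integration by parts with $(\Id-\Delta_\xi)^d$ and Fa\`a-di-Bruno, whereas you factor out $e^{-c_0\mu}$ first and then bound Schwartz seminorms of the remaining compactly supported function uniformly in $\mu$; these are equivalent bookkeeping choices for the same argument.
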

\begin{proof}
A  suitable change of  variable
reduces the proof to the case $j=0.$ 
Then consider a  function~$\phi$
in~$\cD(\R^d\setminus\{0\})$ with value~$1$ on a neighborhood of
  $\Supp\varphi.$ We have
$$\begin{array}{lll}
e^{\lambda\Delta}\dot\Delta_0u&=&\cF^{-1}\left(\phi e^{-\lambda|\cdot|^2}
\hat{\dot\Delta_0u}\right)\\
 & = &  g_\lambda \star \dot\Delta_0u\quad\hbox{with}\quad
 g_\lambda(x) \eqdefa
(2\pi)^{-d} \Int  e^{i(x|\xi)}\phi(\xi)
e^{-\lambda|\xi|^2}d\xi.
\end{array}
$$
Integrating  by parts yields
$$
g_\lambda(x)=(1+|x|^2)^{-d}\int_{\R^d}e^{i(x|\xi)}
 (\Id-\Delta_\xi)^d  \Bigl(\phi(\xi)e^{-\lambda|\xi|^2}\Bigr)d\xi.
$$
Combining Leibniz and Fa\'a-di-Bruno's formulae to bound the integrant, we  get
$$|g_\lambda(x))| \leq C(1+|x|^2)^{-d}e^{-c_0\lambda},$$ 
and thus
\begin{equation}\label{eq:g} 
\|g_\lambda\|_{L^1}\leq Ce^{-c_0\lambda}.
\end{equation}
Now the desired inequality (with $j=0$) just follows from   $L^1\star L^p\rightarrow L^p.$ 
\end{proof}
\begin{theorem}\label{th:heat} Let $u$ satisfy \eqref{eq:heateq1}. Then
for any $p\in[1,\infty]$ and $s\in\R$ the following inequality holds true for all $t>0:$
\begin{equation}\label{eq:maxreg1}
\|u(t)\|_{\dot B^s_{p,1}}+\int_0^t\|\nabla^2u\|_{\dot B^s_{p,1}}\,d\tau
\leq C\biggl(\|u_0\|_{\dot B^s_{p,1}}+\int_0^t\|f\|_{\dot B^s_{p,1}}\,d\tau\biggr)\cdotp
\end{equation}
\end{theorem}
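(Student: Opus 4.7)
My plan is to localize in frequency, apply Duhamel's formula to each dyadic block, use Lemma \ref{l:heat} to get sharp pointwise-in-$j$ exponential decay estimates, then sum up with the weights $2^{js}$ in $\ell^1(\Z)$.

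First, I would apply $\ddj$ to the heat equation. Since $\ddj$ commutes with $\d_t$ and $\Delta$, the block $u_j \eqdefa \ddj u$ solves $\d_t u_j - \Delta u_j = \ddj f$ with initial datum $\ddj u_0$, whence by Duhamel
$$
\ddj u(t) = e^{t\Delta}\ddj u_0 + \int_0^t e^{(t-\tau)\Delta}\ddj f(\tau)\,d\tau.
$$
Taking $L^p$ norms and applying Lemma \ref{l:heat} inside the integral, I obtain
$$
\|\ddj u(t)\|_{L^p} \leq C e^{-c_0 t 2^{2j}} \|\ddj u_0\|_{L^p} + C\int_0^t e^{-c_0(t-\tau)2^{2j}} \|\ddj f(\tau)\|_{L^p}\,d\tau.
$$

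Next, I would handle the two norms on the left of \eqref{eq:maxreg1} separately. For the pointwise-in-$t$ term it suffices to bound the exponentials by $1$, multiply by $2^{js}$ and sum over $j\in\Z$, which produces $\|u_0\|_{\dot B^s_{p,1}} + \int_0^t \|f\|_{\dot B^s_{p,1}}\,d\tau$ after exchanging the time integral with the sum. For the second term, the Bernstein inequalities give $\|\nabla^2 \ddj u\|_{L^p} \lesssim 2^{2j}\|\ddj u\|_{L^p}$, so after multiplying the Duhamel estimate by $2^{2j}$ and integrating in $t$ one gets contributions of the form
$$
\int_0^t 2^{2j} e^{-c_0\tau 2^{2j}}\,d\tau \cdot \|\ddj u_0\|_{L^p}
\quad\text{and}\quad
\int_0^t\!\!\int_0^\tau 2^{2j} e^{-c_0(\tau-s)2^{2j}}\|\ddj f(s)\|_{L^p}\,ds\,d\tau.
$$
Both dyadic factors $\int 2^{2j}e^{-c_0\sigma 2^{2j}}\,d\sigma$ are bounded by $1/c_0$ \emph{independently of $j$} (after a Fubini swap in the second one), which is the heart of the argument.

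After this uniform-in-$j$ bound, it only remains to multiply by $2^{js}$ and sum in $\ell^1(\Z)$; the $\ell^1$ summation passes through the time integral by Fubini and yields the claimed inequality, with the constant $C$ depending only on $c_0$. The main subtlety I expect is precisely this last step: the exponential decay in $j$ from Lemma \ref{l:heat} is what allows the sum over $j$ to behave well at the endpoint $r=1$, whereas the analogous $L^r_t(L^p_x)$-maximal regularity at $r=1$ would fail — the $\ell^1$-type Besov summation is essential for absorbing the time integral against the dyadic weight $2^{2j}$.
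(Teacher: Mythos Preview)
Your proof is correct and follows essentially the same approach as the paper: localize with $\ddj$, apply Duhamel and Lemma~\ref{l:heat}, then use the uniform-in-$j$ bound on $\int 2^{2j}e^{-c_0\sigma 2^{2j}}\,d\sigma$ before summing in $\ell^1$. The only minor difference is that the paper retains the factor $(1-e^{-c_02^{2j}t})$ rather than bounding it by $1$, which yields the slightly sharper inequality \eqref{eq:slightlybetter} used later to control $\|u_L\|_{L^1_T(\dot B^{\frac dp+1}_{p,1})}$ for small $T$.
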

\begin{proof}
If $u$ satisfies \eqref{eq:heateq1}  then we have for any $j\in\Z,$
 $$
\ddj u(t)=e^{t\Delta}\ddj u_0+\int_0^te^{(t-\tau)\Delta}\ddj f(\tau)\,d\tau.
$$
Taking advantage of Lemma \ref{l:heat}, we thus have
\begin{equation}\label{eq:heat2}
\|\ddj u(t)\|_{L^p}\lesssim e^{-c_02^{2j}t}\|\ddj u_0\|_{L^p}
+\int_0^t e^{-c_02^{2j}(t-\tau)}\|\ddj f(\tau)\|_{L^p}\,d\tau.
\end{equation}
Multiplying by $2^{js}$ and summing  up over $j$ yields
$$
\sum_j2^{js}\|\ddj u(t)\|_{L^p}\lesssim
\sum_je^{-c_02^{2j}t}2^{js}\|\ddj u_0\|_{L^p}
+\int_0^te^{-c_02^{2j}(t-\tau)}\sum_j\|\ddj f(\tau)\|_{L^p}\,d\tau
$$
whence 
$$
\|u\|_{L^\infty(0,t;\dot B^s_{p,1})}\lesssim
\|u_0\|_{\dot B^s_{p,1}}+\|f\|_{L^1(0,t;\dot B^s_{p,1})}.
$$
Note that integrating \eqref{eq:heat2} with respect to time  also yields
$$
2^{2j}\|\ddj u\|_{L^1(0,t;L^p)}\lesssim \Bigl(1-e^{-c_02^{2j}t}\Bigr)
\Bigl(\|\ddj u_0\|_{L^p}+\|\ddj f\|_{L^1(0,t;L^p)}\Bigr).
$$
Therefore, multiplying by $2^{js},$ using Bernstein inequality and summing up over $j$ yields
\begin{equation}\label{eq:slightlybetter}
\|\nabla^2u\|_{L^1(0,t;\dot B^{s}_{p,1})}\lesssim
\sum_j \Bigl(1-e^{-c_02^{2j}t}\Bigr)2^{js}\Bigl(\|\ddj u_0\|_{L^p}+\|\ddj f\|_{L^1(0,t;L^p)}\Bigr),
\end{equation}
which is  even slightly better than what we wanted to prove
\end{proof}
\begin{remark}\label{r:heat}  
Starting from \eqref{eq:heat2} and using general convolution inequalities in $\R_+$
gives a whole family of estimates for the heat equation. 
However, as time integration has been performed \emph{before}
summation over $j,$ the norms that naturally appear are 
  $$\|\cdot\|_{\tilde L_t^a(\dot B^\sigma_{b,c})}\eqdefa\Bigl\|2^{j\sigma}\|\cdot\|_{L_t^a(L^b)}\Bigr\|_{\ell^c}
  \quad\hbox{where}\quad \|\cdot\|_{L_t^a(L^b)}\eqdefa\|\cdot\|_{L^a((0,t);L^b(\R^d))}.$$
With this notation,  \eqref{eq:heat2} implies that
$$\|u\|_{\tilde L_t^{\rho_1}(\dot B^{s+\frac2{\rho_1}}_{p,r})}\lesssim
\|u_0\|_{\dot B^s_{p,r}}+\|f\|_{\tilde L^{\rho_2}_t(\dot B^{s-2+\frac2{\rho_2}}_{p,r})}\quad\hbox{for }\ 
1\leq\rho_2\leq\rho_1\leq\infty.
$$
The relevancy of the above norms in the maximal regularity estimates
has been first noticed (in a particular case)
in the pioneering work by J.-Y. Chemin and N. Lerner \cite{ChL}, then extended
to general Besov spaces in \cite{Chemin}. They will play a fundamental role in
the proof of decay estimates, at the end of the paper. 

Let us point out that results in the spirit of Propositions \ref{p:op} and \ref{p:comp} 
may be easily proved for $\wt L_t^\rho(\dot B^\sigma_{p,r})$ spaces, the general rule being just that the time exponents 
behave according to H\"older inequality.  
\end{remark}


\subsection{The linear transport equation}

Here we give estimates in Besov spaces for   the following
\emph{transport equation}:
\begin{equation}\label{eq:transport}
\left\{\begin{array}{lll}\d_ta+v\cdot\nabla_x a+\lambda a=f&\quad\hbox{in }&\R\times\R^d\\
a_{|t=0}=a_0&\quad\hbox{in }&\R^d,\end{array}\right. 
\end{equation}
where the initial data  $a_0=a_0(x),$ the source term $f=f(t,x),$
the damping coefficient $\lambda\geq0$  and the time dependent
transport field $v=v(t,x)$  are given. 

Assuming that $a_0\in X$ and   $f\in L^1_{loc}(\R_+;X),$    the relevant assumptions on $v$
for \eqref{eq:transport} to be uniquely solvable depend on the nature of  the Banach space $X.$
Broadly speaking,  in the classical theory based on Cauchy-Lipschitz theorem, 
 $v$ has to be at least  integrable in time with values in the set of Lipschitz functions,  
 so that it  has a flow $\psi.$  This allows to get the following explicit  solution for \eqref{eq:transport}: 
\begin{equation}\label{eq:transportsol}
a(t,x)=e^{-\lambda t}a_0(\psi_t^{-1}(x))+\int_0^te^{-\lambda(t-\tau)}f(\tau,\psi_\tau(\psi_t^{-1}(x)))\,d\tau.
\end{equation}
\begin{theorem}\label{th:transport}
Let $1\leq p\leq p_1\leq\infty,$ $1\leq r\leq\infty$ and $s\in\R$ satisfy
$$-\min\Bigl(\frac d{p_1},\frac d{p'}\Bigr) < s < 1+\frac d{p_1}\cdotp
$$
Then any smooth enough solution to \eqref{eq:transport}
fulfills
$$
\|a\|_{\tilde L_t^\infty(\dot B^s_{p,r})}+
\lambda \|a\|_{\tilde L_t^1(\dot B^s_{p,r})}\leq e^{CV(t)}
\Bigl(\|a_0\|_{\dot B^s_{p,r}}+\|f\|_{\tilde L_t^1(\dot B^s_{p,r})}\Bigr)
$$
with   $$\ds V(t)\eqdefa\int_0^t\|\nabla v(\tau)\|_{\dot B^{\frac d{p_1}}_{p_1,\infty}\cap L^\infty}\,d\tau.$$ 
In the  case $s=1+\frac d{p_1}$ and $r=1,$ the above inequality is true
with $V'(t)=\|\nabla v(t)\|_{\dot B^{\frac d{p_1}}_{p_1,1}}.$
\end{theorem}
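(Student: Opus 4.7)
The strategy is the classical one for transport equations in Besov spaces: localize in frequency with $\ddj$, perform an $L^p$ energy estimate for each dyadic block, control the arising commutator via Bony's decomposition, and conclude by Gronwall after summation in $\ell^r$.

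First I would apply $\ddj$ to \eqref{eq:transport} to get
$$
\d_t\ddj a + v\cdot\nabla\ddj a + \lambda\,\ddj a = \ddj f + R_j, \qquad R_j\eqdefa [v\cdot\nabla,\ddj] a.
$$
A standard $L^p$ estimate for the transport equation (multiply by $|\ddj a|^{p-2}\ddj a$, integrate, and observe that the transport term produces only $\frac1p\div v\,|\ddj a|^p$) yields
$$
\|\ddj a(t)\|_{L^p}+\lambda\int_0^t\|\ddj a\|_{L^p}\,d\tau
\leq \|\ddj a_0\|_{L^p}+\int_0^t\bigl(C\|\nabla v\|_{L^\infty}\|\ddj a\|_{L^p}+\|\ddj f\|_{L^p}+\|R_j\|_{L^p}\bigr)d\tau.
$$

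The crux is the commutator estimate, which is also where the range on $s$ is dictated. Using Bony's decomposition \eqref{eq:bony}, I would split
$$
R_j = [T_{v^k},\ddj]\d_k a + T_{\d_k\ddj a}\,v^k - \ddj T_{\d_k a}v^k + R(v^k,\d_k\ddj a) - \ddj R(v^k,\d_k a),
$$
and bound each piece using Proposition~\ref{p:op} and a first-order Taylor expansion for $[T_{v^k},\ddj]$. The goal is an estimate of the form
$$
2^{js}\|R_j\|_{L^p}\leq C\,c_j(t)\,\|\nabla v\|_{\dot B^{d/p_1}_{p_1,\infty}\cap L^\infty}\,\|a\|_{\dot B^s_{p,r}},\qquad \|(c_j)\|_{\ell^r}\leq 1.
$$
Here the upper restriction $s<1+d/p_1$ is needed to absorb $T_{\d a}v$, while the lower restriction $s>-\min(d/p_1,d/p')$ guarantees convergence of the remainder terms $R(v,\d a)$ via the embedding $\dot B^{d/p_1}_{p_1,\infty}\hookrightarrow \dot B^{-d/p'}_{\infty,\infty}$ when appropriate. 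The endpoint case $s=1+d/p_1$ with $r=1$ is recovered by replacing $\dot B^{d/p_1}_{p_1,\infty}$ by $\dot B^{d/p_1}_{p_1,1}$, since that is exactly what is required to bound $T_{\d\ddj a}v$ in the endpoint. This commutator bookkeeping is the main technical obstacle.

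Finally I would multiply the dyadic energy inequality by $2^{js}$, plug in the commutator bound, and take the $\ell^r$ norm in $j$ (this is done \emph{before} time integration, so the quantities that appear are of Chemin--Lerner type $\wt L^\infty_t(\dot B^s_{p,r})$ and $\wt L^1_t(\dot B^s_{p,r})$, cf.\ Remark~\ref{r:heat}). This yields
$$
\|a\|_{\wt L^\infty_t(\dot B^s_{p,r})}+\lambda\|a\|_{\wt L^1_t(\dot B^s_{p,r})}
\leq \|a_0\|_{\dot B^s_{p,r}}+\|f\|_{\wt L^1_t(\dot B^s_{p,r})}
+C\int_0^t V'(\tau)\,\|a\|_{\wt L^\infty_\tau(\dot B^s_{p,r})}\,d\tau.
$$
Gronwall's lemma applied to $t\mapsto\|a\|_{\wt L^\infty_t(\dot B^s_{p,r})}+\lambda\|a\|_{\wt L^1_t(\dot B^s_{p,r})}$ then produces the claimed factor $e^{CV(t)}$, completing the estimate.
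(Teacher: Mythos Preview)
Your proposal is correct and follows essentially the same approach as the paper: localize with $\ddj$, use the $L^p$ transport estimate, decompose the commutator $[v\cdot\nabla,\ddj]a$ via Bony's paraproduct into the same five terms (with the mean-value/Taylor argument for $[T_{v^k},\ddj]\d_k a$), establish the bound $2^{js}\|R_j\|_{L^p}\le Cc_j V'\|a\|_{\dot B^s_{p,r}}$, then sum in $j$ and apply Gronwall. Your identification of which terms force the upper and lower restrictions on $s$, and of why the endpoint $s=1+d/p_1$ requires $r=1$ and $\nabla v\in\dot B^{d/p_1}_{p_1,1}$, is also in line with the paper's argument.
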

\begin{proof} Applying $\ddj$ to \eqref{eq:transport} gives
\begin{equation}\label{eq:transport1}
\d_t\ddj a+v\cdot\nabla\ddj a+\lambda\ddj a=\ddj f+\dot R_j\quad\hbox{with }\ 
\dot R_j\eqdefa[v\cdot\nabla,\ddj]a.
\end{equation}
Therefore, from classical $L^p$ estimates for the transport equation, we get 
\begin{multline}\label{eq:aLp}
\|\ddj a(t)\|_{L^p}+\lambda\|\ddj a\|_{L_t^1(L^p)} \leq \|\ddj a_0\|_{L^p}\\
+\int_0^t\Bigl(\|\ddj f\|_{L^p}
+\|\dot R_j\|_{L^p}+\frac{\|\div v\|_{L^\infty}}p
\|\ddj a\|_{L^p}\Bigr)\,d\tau.
\end{multline}
We claim that the remainder term $\dot R_j$ satisfies
\begin{equation}\label{eq:transport2}
\|\dot R_j(t)\|_{L^p}\leq Cc_j(t)2^{-js}\|\nabla v(t)\|_{\dot B^{\frac d{p_1}}_{p_1,\infty}\cap L^\infty}
\|a(t)\|_{\dot B^s_{p,r}}
\quad\hbox{with }\ \|(c_j(t))\|_{\ell^r}=1.
\end{equation}
Indeed, from Bony's decomposition, we infer that (with the summation convention over repeated indices):
\begin{equation}\label{eq:Rj}
\dot R_j= [T_{v^{k}},\dot\Delta_{j}]\partial_{k}a
+T_{\partial_{k}\dot\Delta_{j}a}v^{k}
-\dot\Delta_{j}T_{\partial_{k}a}v^{k}
+R(v^{k},\partial_k\dot\Delta_{j}a)
-\dot\Delta_{j}R(v^{k},\d_ka).
\end{equation}
The first term is the only one where having a commutator improves the estimate.    
 Indeed, owing to the properties of spectral localization of the Littlewood-Paley decomposition, we have
  $$ [T_{v^{k}},\dot\Delta_{j}]\partial_{k}a=\sum_{|j-j'|\leq 
4}[\dot S_{j'-1}v^{k},\dot\Delta_{j}]\partial_{k}\dot\Delta_{j'}a.$$
Now, remark that
$$
[\dot S_{j'-1}v^{k},\dot\Delta_{j}]\partial_{k}\dot\Delta_{j'}a(x) =  
2^{jd}\!\int_{\R^d}\! h(2^j(x-y))
\bigl(\dot S_{j'-1}v^{k}(x)-\dot S_{j'-1}v^{k}(y)\bigr)\d_k\dot\Delta_{j'} a(y)\,dy.
$$
Hence using   the mean value formula and Bernstein inequalities yields
$$
\| [T_{v^{k}},\dot\Delta_{j}]\partial_{k}a\|_{L^p}\lesssim  2^{-j}\|\dot S_{j'-1}\nabla v\|_{L^\infty}
\sum_{|j'-j|\leq4}\|{\d_k\dot\Delta_{j'}a}\|_{L^p}
\lesssim  \|{\nabla v}\|_{L^\infty}
\sum_{|j'-j|\leq4}\|{\dot\Delta_{j'}a}\|_{L^p}.$$
Bounding the  third and last term in \eqref{eq:Rj}  follows from  Proposition \ref{p:op}.
Regarding the second term, we may write
$$
T_{\partial_{k}\dot\Delta_{j}a}v^{k}=\sum_{j'\geq j-3}\dot S_{j'-1}\partial_k\ddj a\,\dot\Delta_{j'}v^k,
$$
hence using Bernstein inequality, 
$$
\|T_{\partial_{k}\dot\Delta_{j}a}v^{k}\|_{L^p} \lesssim \sum_{j'\geq j-3}
2^{j-j'}\|\ddj a\|_{L^p}\,\|\nabla\dot\Delta_{j'}v\|_{L^\infty},
$$
and  convolution inequality for series thus ensures \eqref{eq:transport2} for that term.
\medbreak
Finally, we have
$$
\ddj R(v^k,\d_ka)=\sum_{|j'-j|\leq1} \d_k\dot\Delta_{j'} a\: (\dot\Delta_{j'-1}+\dot\Delta_{j'}+\dot\Delta_{j'+1})v^k,
$$
whence, by virtue of Bernstein inequality,
$$
\|R(v^k,\d_ka)\|_{L^p} \lesssim  \sum_{|j'-j|\leq1} \|\dot\Delta_{j'} a\|_{L^p} \|\nabla v\|_{L^\infty},
$$
and that term is thus also bounded by the r.h.s. of \eqref{eq:transport2}.
\medbreak
Let us resume to \eqref{eq:transport1}. 
Using \eqref{eq:aLp} and \eqref{eq:transport2},
multiplying by $2^{js}$ then summing up over $j$ and using the notations of Remark \ref{r:heat} yields 
$$ \|a\|_{\tilde L_t^\infty(\dot B^s_{p,r})}+\lambda \|a\|_{\tilde L_t^1(\dot B^s_{p,r})}
\leq \|a_0\|_{\dot B^s_{p,r}}+\|f\|_{\wt L^1_t(\dot B^s_{p,r})}+C\int_0^tV'\|a\|_{\dot B^s_{p,r}}\,d\tau.$$
Then applying Gronwall's lemma gives the desired inequality for $a.$
\end{proof}


\section{The local existence in critical spaces}\label{s:local}

This section is dedicated to solving \eqref{eq:nsbaro} locally in time, in critical spaces.
For simplicity, we focus on the case where the density goes to $1$ at $\infty.$ 
Setting   $a\eqdefa\varrho-1$ and   looking for reasonably smooth solutions with positive density,  System \eqref{eq:nsbaro} is equivalent to 
\begin{equation}\label{eq:nsc}
  \left\{
    \begin{aligned}
      & \d_ta+u\cdot\nabla a=-(1+a)\div u,\\
      &\d_t u + u\cdot\nabla u-\frac{\cA u}{1+a} +\nabla  G(a)
      =\frac1{1+a}\div\bigl(2\wt\mu(a) D(u)+\wt\lambda(a)\div u\:\Id\bigr),
        \end{aligned} \right.
\end{equation}
where\footnote{In our approach, the exact value of functions $G,$ $\wt\lambda$ and $\wt\mu$ will not matter. We shall only  need enough 
smoothness,  and vanishing at $0$ for $\wt\lambda$ and $\wt\mu.$}
  $G'(a)\eqdefa\frac{P'(1+a)}{1+a},$ $\cA\eqdefa\mu\Delta+(\lambda+\mu)\nabla\div$  with $\lambda\eqdefa\lambda(1)$ and $\mu\eqdefa\mu(1),$ 
$$\wt\mu(z)\eqdefa\mu(1+z)-\mu(1)\ \hbox{ and }\ \wt\lambda(z)\eqdefa\lambda(1+z)-\lambda(1).$$ 
The scaling invariance properties for $(a,u)$ are those  pointed out
for  \eqref{eq:nsbaro}. 
Critical norms  for the initial data are thus invariant for all $\ell>0$ by 
$$
a_0(x)\leadsto a_0(\ell x)\quad\hbox{and}\quad 
u_0(x)\leadsto \ell u_0(\ell x).
$$
In all that follows, we shall only consider \emph{homogeneous Besov spaces} 
 having the above scaling invariance and last
index $1.$ There are good reasons for that choice, which will be explained throughout. 
Remembering \eqref{eq:scaling}, we thus  take 
\begin{equation}\label{eq:regularity}
a_0\in \dot B^{\frac d{p}}_{p,1}\quad\hbox{and}\quad
u_0\in \dot B^{\frac d{p}-1}_{p,1}.
\end{equation}
In order to guess what is the relevant solution space, we just  use the fact that
  $a$ is governed by a transport equation and that 
 $u$ may be seen as the solution to  the following   \emph{Lam\'e equation}:
\begin{equation}\label{eq:lame}
\d_tu-\cA u=f,\qquad u|_{t=0}=u_0.
\end{equation}
In the whole space case, the  solutions to \eqref{eq:lame} also satisfy 
the estimates of Theorem \ref{th:heat} and Remark \ref{r:heat}  whenever the following ellipticity 
condition is fulfilled:
\begin{equation}\label{eq:ellipticity}
\mu>0\quad\hbox{and}\quad \nu\eqdefa \lambda+2\mu>0.
\end{equation}
Indeed, if we denote by $\cP$ and $\cQ$ the orthogonal projectors over divergence-free and potential vector fields, 
then we have
$$
\d_t\cP u-\mu\Delta\cP u=\cP f\quad\hbox{and}\quad \d_t\cQ u-\nu\Delta\cQ u=\cQ f.
$$
In particular,  applying Theorem \ref{th:heat} yields for all $t\geq0$:
$$\displaylines{
\|\cP u(t)\|_{\dot B^s_{p,1}}+\mu\int_0^t\|\nabla^2\cP u\|_{\dot B^s_{p,1}}\,d\tau
\leq C\biggl(\|\cP u_0\|_{\dot B^s_{p,1}}+\int_0^t\|\cP f\|_{\dot B^s_{p,1}}\,d\tau\biggr),\cr
\|\cQ u(t)\|_{\dot B^s_{p,1}}+\nu\int_0^t\|\nabla^2\cQ u\|_{\dot B^s_{p,1}}\,d\tau
\leq C\biggl(\|\cQ u_0\|_{\dot B^s_{p,1}}+\int_0^t\|\cQ f\|_{\dot B^s_{p,1}}\,d\tau\biggr)\cdotp}
$$
As  $\cP$ and $\cQ$ are  continuous on  $\dot B^s_{p,1}$ (being
$0$ order multipliers) we conclude that
\begin{equation}\label{eq:maxreglame}
\|u(t)\|_{\dot B^s_{p,1}}+\min(\mu,\nu)\int_0^t\|\nabla^2u\|_{\dot B^s_{p,1}}\,d\tau
\leq C\biggl(\|u_0\|_{\dot B^s_{p,1}}+\int_0^t\|f\|_{\dot B^s_{p,1}}\,d\tau\biggr)\cdotp
\end{equation}
So, in short,  starting from $u_0\in\dot B^{\frac dp-1}_{p,1},$ we expect  
$u$ in \eqref{eq:nsc} to be in  $$
E_p(T)\eqdefa\bigl\{u\in\cC([0,T];\dot B^{\frac dp-1}_{p,1}), 
\ \d_tu,\nabla ^2u\in L^1(0,T;\dot B^{\frac dp-1}_{p,1})\bigr\}\cdotp
$$
In particular $\nabla u$ has exactly the regularity needed in Theorem \ref{th:transport} 
to ensure the conservation of the initial regularity for $a,$ and we thus expect $a\in\cC([0,T];\dot B^{\frac dp}_{p,1}).$
\medbreak
The rest of this section is devoted to  the proof of  the following  statement: 
\begin{theorem}\label{th:small1} Let the viscosity coefficients $\lambda$ and $\mu$ depend smoothly on $\varrho,$ and satisfy \eqref{eq:ellipticity}.
Assume that $(a_0,u_0)$ fulfills \eqref{eq:regularity} for some  $1\leq p<2d,$ and that  $d\geq2.$ If in addition $1+a_0$ is bounded away from $0$ then 
 \eqref{eq:nsc} has  a unique local-in-time 
 solution\footnote{Owing to Remark \ref{r:heat} and to Theorem \ref{th:transport}
 the constructed solution will have the additional property that 
 $u\in\wt L_T^\infty(\dot B^{\frac dp-1}_{p,1})$ and that $a\in\wt L_T^\infty(\dot B^{\frac dp}_{p,1}).$} 
  and  $(a,u)$ with  
$a$ in $\cC([0,T];\dot B^{\frac dp}_{p,1})$ and
$u$ in $E_p(T).$
\end{theorem}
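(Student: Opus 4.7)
The plan is to construct the solution by a classical iteration scheme, derive uniform bounds on a small time interval using the heat and transport estimates proved above, pass to the limit in a weaker topology, and address uniqueness separately. To set up the iteration, let $u^0$ solve the free Lam\'e equation $\d_tu^0-\cA u^0=0$ with data $u_0$ and set $a^0\equiv a_0$. Given $(a^n,u^n)$, define $(a^{n+1},u^{n+1})$ as the solution of the decoupled linear system
\begin{align*}
&\d_ta^{n+1}+u^n\cdot\nabla a^{n+1}=-(1+a^n)\,\div u^n,\\
&\d_tu^{n+1}-\cA u^{n+1}=-u^n\cdot\nabla u^n+\frac{a^n}{1+a^n}\cA u^n-\nabla G(a^n)+\frac1{1+a^n}\div\bigl(2\wt\mu(a^n)D(u^n)+\wt\lambda(a^n)\,\div u^n\,\Id\bigr),
\end{align*}
with initial data $(a_0,u_0)$.

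For the \textbf{uniform bounds}, I would apply Theorem \ref{th:transport} with $s=\frac dp$, $p_1=p$ to the density equation and the Lam\'e estimate \eqref{eq:maxreglame} to the velocity equation, combined with Corollary \ref{c:op}, the algebra property of Remark \ref{r:algebra}, the composition result of Proposition \ref{p:comp}, and the embedding $\dot B^{\frac dp}_{p,1}\hookrightarrow L^\infty$ (which preserves the positivity of $1+a^n$ on a short time). An induction then shows that for $T$ small enough---depending only on the norms of the data, on $\mu,\nu$ and on the lower bound of $1+a_0$---the sequence $(a^n,u^n)$ remains in a fixed ball of $\cC([0,T];\dot B^{\frac dp}_{p,1})\times E_p(T)$. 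The crucial quantity is $V^n(T)\eqdefa\int_0^T\|\nabla u^n(\tau)\|_{\dot B^{\frac dp}_{p,1}}\,d\tau$, which drives the exponential factor in Theorem \ref{th:transport} and is made small by choosing $T$ small, thanks to the maximal regularity control of $u^n$.

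For \textbf{convergence}, the density equation having no smoothing effect, estimates for the differences $(\da^n,\du^n)\eqdefa(a^{n+1}-a^n,u^{n+1}-u^n)$ necessarily lose one derivative. I would show contraction in the weaker space $\cC([0,T];\dot B^{\frac dp-1}_{p,1})\times\bigl(\cC([0,T];\dot B^{\frac dp-2}_{p,1})\cap L^1_T(\dot B^{\frac dp}_{p,1})\bigr)$, so that $(a^n,u^n)$ converges to some $(a,u)$ in this weaker topology; the Fatou property combined with the uniform bounds of the previous step then restores the full critical regularity, and checking that the limit indeed solves \eqref{eq:nsc} is routine.

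\textbf{Uniqueness and main obstacle.} For two solutions $(a_i,u_i)_{i=1,2}$ sharing the same data, one writes the system satisfied by $(\da,\du)\eqdefa(a_2-a_1,u_2-u_1)$ and estimates it in the same weaker space as above, closing the argument with Gronwall. The main obstacle lies precisely here: the paraproduct and remainder terms arising from Bony's decomposition \eqref{eq:bony} applied to products like $\du\cdot\nabla a_i$ and $\du\cdot\nabla u_i$ at the reduced regularity must be controlled through Proposition \ref{p:op}, and the summability restrictions on $s_1+s_2$ involving the exponent $\frac dp-1$ translate into the constraint $p<2d$. The hypothesis $d\geq 2$ enters analogously through the transport estimate of Theorem \ref{th:transport} at regularity $\frac dp-1$, namely to guarantee $-d/p'<\frac dp-1$. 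Once Gronwall yields $\da=\du=0$ on a short interval, uniqueness propagates to the full existence time by a standard connectedness argument.
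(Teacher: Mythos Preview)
Your iterative scheme is exactly the first of the two proofs the paper gives (the ``Eulerian'' approach), and your uniform bounds step is correct: product laws at regularity $\frac dp-1$ indeed yield the constraint $p<2d$. The genuine gap is in the convergence and uniqueness steps, where you lose one derivative and work with $\du$ in $L^1_T(\dot B^{\frac dp}_{p,1})\cap L^\infty_T(\dot B^{\frac dp-2}_{p,1})$. Among the terms in the equation for $\du$ you will find $I(a^1)\,\cA\du$ (and, with variable viscosities, $\frac1{1+a^1}\div(\wt\mu(a^1)D(\du)+\cdots)$), where $\cA\du$ lives only in $\dot B^{\frac dp-2}_{p,1}$. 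To put the product back into $\dot B^{\frac dp-2}_{p,1}$ via Proposition~\ref{p:op}, the remainder term $R(I(a^1),\cA\du)$ requires $\frac dp+\bigl(\frac dp-2\bigr)>0$, i.e.\ $p<d$; and to have $\dot B^{\frac dp-2}_{p,1}$ be a genuine Banach space (needed for the contraction) one wants $\frac dp-2>-\frac dp$, again $p<d$, which in particular forces $d\geq3$. So your scheme, as written, proves existence only for $1\leq p<d$ and $d\geq3$, not for the full range $1\leq p<2d$, $d\geq2$ announced in the theorem. The paper states this explicitly and, to reach the full range, switches to a second proof based on Lagrangian coordinates: rewriting the system in terms of the flow kills the transport part of the density equation, so stability estimates can be carried out \emph{without} losing a derivative, and a direct fixed point in $E_p(T)$ closes under the weaker constraint $p<2d$.

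A secondary point: even within the restricted range, the endpoint $p=d$ (or $d=2$, $p\leq2$) for uniqueness is not handled by plain Gronwall. The product $\da\,\cA u^2$ with $\da\in\dot B^0_{d,1}$ and $\cA u^2\in\dot B^0_{d,1}$ only lands in $\dot B^{-1}_{d,\infty}$, so one must work in $\tilde L^1_T(\dot B^1_{d,\infty})$ for $\du$, bridge back to $L^1_T(\dot B^1_{d,1})$ via a logarithmic interpolation inequality, and conclude with Osgood's lemma rather than Gronwall.
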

We propose two different proofs for Theorem \ref{th:small1}. 
The first one uses an iterative scheme for approximating \eqref{eq:nsc} which consists
in solving a linear transport equation and the Lam\'e equation with appropriate right-hand sides. 
Taking advantage of  Theorem \ref{th:transport} and of \eqref{eq:maxreglame},  it is easy 
to bound the sequence in the expected solution space on some  fixed time interval $[0,T]$ with $T>0.$
However, because the whole system is not fully parabolic, the strong convergence of the sequence is shown 
 for a weaker norm corresponding to `a loss of one derivative'. For that reason, that
approach works only\footnote{Let us emphasize however that one may modify the iterative scheme for constructing solutions, then resort to compactness arguments
 to get existence for the full range $1\leq p<2d$ and $d\geq2$. }  if $1\leq p<d$ and $d\geq3.$ 
The same restriction occurs as regards the uniqueness issue, although 
the limit case $p=d,$ or $d=2$ and $p\leq2$ is tractable  by taking advantage of a logarithmic interpolation inequality
combined with Osgood lemma (see the end of this section).

\smallbreak
The second proof consists in rewriting \eqref{eq:nsc} in \emph{Lagrangian coordinates.} 
Then the density becomes essentially time independent, and one just has to concentrate on the
velocity equation that is of parabolic type for small enough time, and can thus be solved by the
contracting mapping argument.

\subsection{The classical  proof in Eulerian coordinates}

We here present the direct approach for solving \eqref{eq:nsc}. Our 
proof covers only the case $d\geq3$ and $1\leq p<d$ as regards existence, 
and $1\leq p\leq d$ with  $d\geq2$ for uniqueness (variations on the method would allow to get existence for the full range $1\leq p<2d$ with $d\geq2,$ though). 
To simplify the presentation, we  assume that $\lambda$ and $\mu$ are density independent
so that \eqref{eq:nsc} rewrites
$$
\left\{\begin{array}{l}
\d_ta+u\cdot\nabla a=-(1+a)\div u,\\
\d_tu-\cA u=-u\cdot\nabla u- I(a)\cA u-\nabla(G(a)),\end{array}
\right.
$$
with $I(a)\eqdefa \frac a{1+a}$ and $G'(a)\eqdefa \frac{P'(1+a)}{1+a}\cdotp$
\medbreak
Furthermore, we suppose that for a small enough  constant $c=c(p,d,G),$
\begin{equation}\label{eq:a0}
\|a_0\|_{\dot B^{\frac dp}_{p,1}}\leq c.
\end{equation}

\subsubsection*{Step 1: An iterative scheme} 

We set $a_0^n\eqdefa \dot S_na_0$ and $u_0^n\eqdefa\dot S_nu_0,$ and
define the first term $(a^0,u^0)$ of the sequence of approximate solutions to be
$$
a^0\eqdefa a^0_0\quad\hbox{and}\quad u^0\eqdefa e^{t\cA} u_0^0,
$$
where $(e^{t\cA})_{t\geq0}$ stands for the semi-group of operators associated to 
\eqref{eq:lame}.\medbreak
Next, once $(a^n,u^n)$ has been constructed, we define
$a^{n+1}$ and $u^{n+1}$ to be the solutions to the following \emph{linear} transport and Lam\'e equations:
\begin{equation}\label{eq:nscn}
\left\{\begin{array}{l}
\d_ta^{n+1}+u^n\cdot\nabla a^{n+1}=-(1+a^n)\div u^n,\\
\d_tu^{n+1}-\cA u^{n+1}=-u^n\cdot\nabla u^n- I(a^n)\cA u^n-\nabla(G(a^n)),\end{array}
\right. \end{equation}
supplemented with initial data   $a_0^{n+1}$ and $u_0^{n+1}.$

\subsubsection*{Step 2: Uniform estimates in the case $1\leq p<2d$ and $d\geq2$} 

As the data are smooth, it is not difficult to check (by induction) that
$a^n$ and $u^n$ are smooth and globally defined.
We claim that there exists some $T>0$ such that $(a^n)_{n\in\N}$
is bounded in $\cC([0,T];\dot B^{\frac dp}_{p,1})$ and $(u^n)_{n\in\N}$ is bounded in
the space $E_p(T).$
Indeed, Theorem \ref{th:transport} and the fact  that  $\dot B^{\frac dp}_{p,1}$ is stable by product  imply that  for some $C\geq1,$
$$\displaylines{
\|a^{n+1}(T)\|_{\dot B^{\frac dp}_{p,1}}\leq  \|a_0^{n+1}\|_{\dot B^{\frac dp}_{p,1}}+C\int_0^T
(1+\|a^n\|_{\dot B^{\frac dp}_{p,1}})\|\div u^n\|_{\dot B^{\frac dp}_{p,1}}\,dt\hfill\cr\hfill
+C\int_0^T\|\nabla u^n\|_{\dot B^{\frac dp}_{p,1}}\|a^{n+1}\|_{\dot B^{\frac dp}_{p,1}}\,dt.}
$$
Let  $U^n(T)\eqdefa\|\nabla u^n\|_{L^1_T(\dot B^{\frac dp}_{p,1})}.$
Applying Gronwall's lemma  and  using the definition of $a_0^{n+1},$ we thus get
\begin{equation}\label{eq:an+1}
\|a^{n+1}\|_{L^\infty_T(\dot B^{\frac dp}_{p,1})}\leq Ce^{CU^n(T)}\|a_0\|_{\dot B^{\frac dp}_{p,1}}+
(1+\|a^n\|_{L^\infty_T(\dot B^{\frac dp}_{p,1})})\Bigl(e^{CU^n(T)}-1\Bigr)\cdotp
\end{equation}
Therefore, assuming  that
$a_0$ fulfills \eqref{eq:a0} with some small enough $c,$ that
\begin{equation}\label{eq:an0}
\|a^{n}\|_{L^\infty_T(\dot B^{\frac dp}_{p,1})}\leq 4Cc
\end{equation}
and that 
\begin{equation}\label{eq:Un}
CU^n(T)\leq\log(1+c),
\end{equation}
we conclude from \eqref{eq:an+1} that 
$a^{n+1}$ also satisfies \eqref{eq:an0} \emph{for the same $T$}.
At this point, let us observe that as $\dot B^{\frac dp}_{p,1}$ is continuously embedded
in $L^\infty,$ one may take $c$ so small as 
\begin{equation}\label{eq:pasvide}
\|a^{n}\|_{L^\infty_T(\dot B^{\frac dp}_{p,1})}\leq 4Cc
\quad\hbox{implies}\quad
\|a^{n}\|_{L^\infty([0,T]\times\R^d)}\leq 1/2.
\end{equation}
Let us now prove estimates for the velocity. {}From \eqref{eq:maxreglame},  we get for some
constant $C$ depending only on $\lambda$ and $\mu,$
$$
\|u^{n+1}\|_{E_p(T)}
\leq  C\biggl(\|u_0\|_{\dot B^{\frac dp-1}_{p,1}}+
\int_0^T\|u^n\cdot\nabla u^n+I(a^n)\cA u^n+\nabla(G(a^n))\|_{\dot B^{\frac dp-1}_{p,1}}\,dt\biggr)\cdotp
$$
The terms in the r.h.s. may be bounded by means of Propositions \ref{p:op}
 and \ref{p:comp} (remembering \eqref{eq:pasvide})   if $d\geq2$ and $1\leq p< 2d$. We get for some 
  $C'=C'(p,d,G)$:
$$\displaylines{\|u^{n+1}\|_{E_p(T)}\leq  C'\biggl( \|u_0\|_{\dot B^{\frac dp-1}_{p,1}}
+\bigl(\|a^n\|_{L^\infty_T(\dot B^{\frac dp}_{p,1})}+\|u^n\|_{L^\infty_T(\dot B^{\frac dp-1}_{p,1})}\bigr)\|\nabla u^n\|_{L^1_T(\dot B^{\frac dp}_{p,1})}
\hfill\cr\hfill+T\|a^n\|_{L_T^\infty(\dot B^{\frac dp}_{p,1})}\biggr)\cdotp}
$$
Using \eqref{eq:an0} and the definition of $U^n,$ this implies that
$$
\|u^{n+1}\|_{E_p(T)}\leq  C'\bigl( \|u_0\|_{\dot B^{\frac dp-1}_{p,1}}+(4Cc+U^n(T))\|u^n\|_{E_p(T)}
+4CcT\bigr).
$$
Therefore, assuming that \eqref{eq:Un} is fulfilled and taking smaller $c$ if needed,
we get
$$
\|u^{n+1}\|_{E_p(T)}\leq\frac12\|u^n\|_{E_p(T)}+C'(\|u_0\|_{\dot B^{\frac dp-1}_{p,1}}+4CcT),
$$
and thus, if
\begin{equation}\label{eq:un1}
\|u^n\|_{E_p(T)}\leq 2C'\bigl(\|u_0\|_{\dot B^{\frac dp-1}_{p,1}}+4cCT\bigr)
\end{equation}
then $u^{n+1}$ also satisfies \eqref{eq:un1}.
\medbreak
To complete the proof,  we still have to justify that
\eqref{eq:Un} is fulfilled at rank $n+1.$
From the definition of $\|\cdot\|_{E_p(T)},$ embedding and \eqref{eq:un1} (at rank $n+1$), we know that
there exists some constant $C''$ so that 
$$
U^{n+1}(T)\leq C''\bigl(\|u_0\|_{\dot B^{\frac dp-1}_{p,1}} +cT\bigr).
$$
Hence there exists a constant $c'>0$ such that if $T$ and $u_0$ satisfy
\begin{equation}\label{eq:smallu0}
\|u_0\|_{\dot B^{\frac dp-1}_{p,1}} +cT\leq c'
\end{equation}
then both \eqref{eq:Un} and \eqref{eq:un1} are  fulfilled at rank $n+1.$
\medbreak
If  $\|u_0\|_{\dot B^{\frac dp-1}_{p,1}}\geq c'$  then 
 we split $u^n$ into $u_L^n+\tilde u^n$ with $u_L^n(t)\eqdefa e^{t\cA}u_0^n.$  
   Denoting $u_L\eqdefa e^{t\cA}u_0$ and observing that $u_L^n=\dot S_n u_L,$ we discover that
$$
U^n(T)\leq \|\nabla u^n_L\|_{L^1_T(\dot B^{\frac dp}_{p,1})}
+\|\nabla\tilde u^n\|_{L^1_T(\dot B^{\frac dp}_{p,1})}\leq
 C\|\nabla u_L\|_{L^1_T(\dot B^{\frac dp}_{p,1})}+\|\nabla\tilde u^n\|_{L^1_T(\dot B^{\frac dp}_{p,1})}.
 $$
 The  term with $u_L$ goes  to $0$ for $T$ tending to $0$  with a speed of convergence that may be described according to \eqref{eq:slightlybetter}. 
To handle   the second  term, we  observe that  $\tilde u^{n+1}$ satisfies  
 $$\displaylines{\partial_t\tilde u^{n+1}-\cA\tilde u^{n+1}=-\tilde u^n\cdot\nabla u^n-u_L^n\cdot\nabla\tilde u^n
-u_L^n\cdot\nabla u_L^n-\nabla(G(a^n))-I(a^n)\cA u^n.}$$
Because $\wt u^{n+1}(0)=0,$  combining \eqref{eq:maxreglame}, product laws in Besov spaces and \eqref{eq:a0}, we   get
\begin{multline}\label{eq:ubar}
\|\tilde u^{n+1}\|_{E_p(T)}\leq C\biggl(\int_0^T\|\tilde u^n\|_{\dot B^{\frac dp-1}_{p,1}}\|\nabla u^n\|_{\dot B^{\frac dp}_{p,1}}\,dt+
\int_0^T\!\|u_L\|_{\dot B^{\frac dp}_{p,1}}\|\tilde u^n\|_{\dot B^{\frac dp}_{p,1}}\,dt\\
+\|u_L\|_{L_T^1(\dot B^{\frac dp+1}_{p,1})}\|u_L\|_{L_T^\infty(\dot B^{\frac dp-1}_{p,1})}
+\|a^n\|_{L_T^\infty(\dot B^{{\frac dp}}_{p,1})}\|u^n\|_{L_T^1(\dot B^{{\frac dp}-1}_{p,1})}+T\|a^n\|_{L_T^\infty(\dot B^{{\frac dp}}_{p,1})}\biggr)\cdotp
\end{multline}
Arguing by interpolation yields for any $\beta>0,$
$$
\int_0^T\!\!\|u_L\|_{\dot B^{\frac dp}_{p,1}}\|\tilde u^n\|_{\dot B^{\frac dp}_{p,1}}\,dt
\leq \beta \|u_L\|_{L_T^\infty(\dot B^{\frac dp-1}_{p,1})}
\|\tilde u^n\|_{L_T^1(\dot B^{\frac dp\!+\!1}_{p,1})}+C\beta^{-1}\|u_L\|_{L_T^1(\dot B^{\frac dp\!+\!1}_{p,1})}
\|\tilde u^n\|_{L_T^\infty(\dot B^{\frac dp-1}_{p,1})}.
$$
Besides,  as $ \|u_L\|_{L_T^\infty(\dot B^{\frac dp-1}_{p,1})}\leq C \|u_0\|_{\dot B^{\frac dp-1}_{p,1}},$
Inequality \eqref{eq:ubar} implies that
$$
\displaylines{
\|\tilde u^{n+1}\|_{E_p(T)}\leq C\biggl(\bigl(U^n(T)+\beta \|u_0\|_{\dot B^{\frac dp-1}_{p,1}}+\beta^{-1}\|u_L\|_{L_T^1(\dot B^{\frac dp+1}_{p,1})}
+\|a^n\|_{L^\infty_T(\dot B^{\frac dp})}\bigr)\|\tilde u^{n}\|_{E_p(T)}\hfill\cr\hfill+\|u_0\|_{\dot B^{\frac dp-1}_{p,1}}\|u_L\|_{L_T^1(\dot B^{\frac dp+1}_{p,1})}
+(T+\|u_L\|_{L_T^1(\dot B^{\frac dp+1}_{p,1})})\|a^n\|_{L_T^\infty(\dot B^{{\frac dp}}_{p,1})}\biggr)\cdotp}
$$
Choosing $\beta=1/(4C\|u_0\|_{\dot B^{\frac dp-1}_{p,1}}),$ remembering 
\eqref{eq:an0} and \eqref{eq:Un} (taking $c$ smaller if needed), and that 
\begin{equation}\label{eq:uL}
(1+\|u_0\|_{\dot B^{\frac dp-1}_{p,1}})\|u_L\|_{L_T^1(\dot B^{\frac dp+1}_{p,1})}\leq c,
\end{equation}
we conclude that there exists $C'''$ so that
$$
\|\tilde u^{n+1}\|_{E_p(T)}\leq \frac12\|\tilde u^{n}\|_{E_p(T)}+C'''c.
$$
Hence $\|\tilde u^{n}\|_{E_p(T)}\leq 2C'''c$ implies that $\|\tilde u^{n+1}\|_{E_p(T)}\leq2C'''c,$
and thus \eqref{eq:un1} is fulfilled at rank $n+1$ if $c$ has been chosen small enough. 
\medbreak
Finally, let us notice that there exists some $T>0$ so that 
$$
\sum_j\Bigl(1-e^{-c_02^{2j}T}\Bigr)2^{j(\frac dp-1)}\|\ddj u_0\|_{L^p}\leq \frac{c}{1+\|u_0\|_{\dot B^{\frac dp-1}_{p,1}}}\cdotp
$$
Hence \eqref{eq:slightlybetter} ensures that \eqref{eq:uL} is satisfied for this choice of $T.$

\subsubsection*{Step 3: Convergence in the case $1\leq p<d$ and $d\geq3$}

Let $\da^n\eqdefa a^{n+1}-a^n$ and $\du^n\eqdefa u^{n+1}-u^n.$
The couple $(\da^{n+1},\du^{n+1})$ satisfies 
\begin{equation}\label{nslocconv}
\!\!\left\{
\begin{array}{l}
\partial_t\da^{n+1}+u^{n+1}\cdot\nabla\da^{n+1}=\sum_{i=1}^3\dF_i^n,\\
\partial_t\du^{n+1}-\cA\du^{n+1}=\sum_{i=1}^{5}\dG_i^n,
\end{array}\right.
\end{equation}
$$\displaylines{\hbox{with}\hfill
\dF_1^n\eqdefa-\du^n\cdot\nabla a^{n+1},\qquad
\dF_2^n\eqdefa -\da^n\,\div u^{n+1},\qquad
\dF_3^n\eqdefa -(1+a^n)\div\du^n,\hfill\cr
\dG_1^n\eqdefa\bigl(I(a^{n})-I(a^{n+1})\bigr)\cA u^{n+1},\quad
\dG_2^n\eqdefa -I(a^n)\,\cA\du^n,\quad
\dG_3^n\eqdefa\nabla(G(a^{n})-G(a^{n+1})),\hfill\cr
\dG_4^n\eqdefa-u^{n+1}\cdot\nabla\du^n,\qquad
\dG_5^n\eqdefa-\du^n\cdot\nabla u^n.}
$$
Owing to the first equation,
one can perform estimates for $(\da^n,\du^n)$ only in a space with one less derivative, namely 
in $\cC([0,T];\dot B^{\frac dp-1}_{p,1})\times F_p(T)$ with
$$
F_p(T)\eqdefa\cC([0,T];\dot B^{\frac dp-2}_{p,1})\cap L^1(0,T;\dot B^{\frac dp}_{p,1}).
$$ 
Now,  using the same type of computations  as in Step 3, we get
for all $t\in[0,T],$
$$\displaylines{
\|\da^{n+1}(t)\|_{\dot B^{\frac dp-1}_{p,1}}\leq \|\da^{n+1}(0)\|_{\dot B^{\frac dp-1}_{p,1}}
+C\int_0^t\|\nabla u^{n+1}\|_{\dot B^{\frac dp}_{p,1}}
\bigl(\|\da^n\|_{\dot B^{\frac dp-1}_{p,1}}+\|\da^{n+1}\|_{\dot B^{\frac dp-1}_{p,1}}\bigr)\,d\tau\hfill\cr\hfill
+\int_0^t\bigl(1+C\|a^n\|_{\dot B^{\frac dp}_{p,1}}+C\|a^{n+1}\|_{\dot B^{\frac dp}_{p,1}}\bigr)\|\du^n\|_{\dot B^{\frac dp}_{p,1}}\,d\tau.}
$$
Using the bounds provided by the previous step, we thus get, taking $c$ smaller if needed, 
\begin{equation}\label{eq:dan0}
\|\da^{n+1}\|_{L^\infty_t(\dot B^{\frac dp-1}_{p,1})}\leq \|\da^{n+1}(0)\|_{\dot B^{\frac dp-1}_{p,1}}
+\frac18\|\da^{n}\|_{L^\infty_t(\dot B^{\frac dp-1}_{p,1})}+2\|\du^n\|_{F_p(t)}.
\end{equation}
As in the previous step, bounding $\du^{n+1}$ in $F_p(T)$ follows from \eqref{eq:maxreglame} 
and product laws. 
However, as less regularity is available, one has to make the stronger assumption
\begin{equation}\label{eq:condpd}
d\geq3\quad\hbox{and}\quad 1\leq p<d.
\end{equation}
Taking $c$ smaller if needed,  we eventually get thanks to \eqref{eq:an0}, \eqref{eq:Un}
and \eqref{eq:un1} 
$$
\|\du^{n+1}\|_{F_p(T)}\leq C \|\du^{n+1}(0)\|_{\dot B^{\frac dp-2}_{p,1}}
+\frac18\bigl(\|\da^{n}\|_{L^\infty_T(\dot B^{\frac dp-1}_{p,1})}+\|\du^n\|_{F_p(T)}\bigr).
$$
Combining with \eqref{eq:dan0} yields
$$\displaylines{
\|\da^{n+1}\|_{L^\infty_T(\dot B^{\frac dp-1}_{p,1})} + 4\|\du^{n+1}\|_{F_p(T)}
\leq C\bigl(\|\da^{n+1}(0)\|_{\dot B^{\frac dp-1}_{p,1}} + 4\|\du^{n+1}(0)\|_{\dot B^{\frac dp-2}_{p,1}}\bigr)
\hfill\cr\hfill+\frac58\bigl(\|\da^{n}\|_{L^\infty_T(\dot B^{\frac dp-1}_{p,1})} + 4\|\du^{n}\|_{F_p(T)}\bigr).}
$$
Summing up over $n\in\N,$ we conclude that 
$(a^n-a^0)_{n\in\N}$ and $(u^n-u^0)_{n\in\N}$ converge in $\cC([0,T];\dot B^{\frac dp-1}_{p,1})$ and 
in $F_p(T),$ respectively.

\subsubsection*{Step 4: Checking that the limit is a solution and upgrading regularity}

{}From Step 3, we  know that there exists $a$ and $u$ so that
$$
a^n-a_0\to a-a_0\quad\hbox{in}\quad L^\infty(0,T;\dot B^{\frac dp-1}_{p,1})\quad\hbox{and}\quad
u^n-u_0\to u-u_0\quad\hbox{in}\quad F_p(T).
$$
The bounds of  Step 2  combined with Banach-Alaoglu theorem imply  that in addition
$$a^n\rightharpoonup a\ \hbox{ in }\ L^\infty(0,T;\dot B^{\frac dp}_{p,1})\ \hbox{ weak } *\quad\hbox{and}\quad
u^n\rightharpoonup u\ \hbox{ in }\ L^\infty(0,T;\dot B^{\frac dp-1}_{p,1})\ \hbox{ weak } *.
$$
Routine verifications thus allow to pass to the limit in \eqref{eq:nscn}.

\medbreak
The previous step tells us that  $\nabla^2u$ is in  $L^1(0,T;\dot B^{\frac dp-2}_{p,1}).$ 
To upgrade the regularity exponent by $1,$ let us write that  for all $J\in\N$:
$$\displaylines{
\sum_{|j|\leq J}\int_{0}^T2^{j(\frac dp-1)}\|\ddj\nabla^2u\|_{L^p}\,dt
\leq \sum_{|j|\leq J}\int_{0}^T2^{j(\frac dp-1)}\|\ddj\nabla^2u^n\|_{L^p}\,dt
\hfill\cr\hfill+2^J\sum_{|j|\leq J}\int_{0}^T2^{j(\frac dp-2)}\|\ddj\nabla^2u-\ddj\nabla^2u^n\|_{L^p}\,dt.}
$$
 The first term is  bounded  by the r.h.s. of  \eqref{eq:un1} while, by virtue of Step 3, the second one tends to $0$ when $n$ goes $0.$ 
Hence, letting $J$  tend to $+\infty$ ensures that $\|\nabla^2 u\|_{L^1_T(\dot B^{\frac dp-1}_{p,1})}$ is finite. Next, as 
 $(a,u)$ satisfies \eqref{eq:nsc}, Theorem \ref{th:transport} and Inequality  \eqref{eq:maxreglame} imply that $a\in\wt L^\infty_T(\dot B^{\frac dp}_{p,1})$ and that 
$u\in\wt L^\infty_T(\dot B^{\frac dp-1}_{p,1}),$ which, combined with the fact that 
$a-a_0\in \cC([0,T];\dot B^{\frac dp-1}_{p,1})$ and $u-u_0 \in\cC([0,T];\dot B^{\frac dp-2}_{p,1})$ implies that 
$a\in \cC([0,T];\dot B^{\frac dp}_{p,1})$ and $u\in \cC([0,T];\dot B^{\frac dp-1}_{p,1}).$

\subsubsection*{Step 5: Uniqueness}

Consider two solutions $(a^1,u^1)$ and $(a^2,u^2)$ of \eqref{eq:nsc} with the above
regularity. The difference 
 $(\da,\du)\eqdefa(a^2\!-\!a^1,u^2\!-\!u^1)$
satisfies
\begin{equation}\label{nslocuniq}
\!\!\left\{
\begin{array}{l}
\partial_t\da+u^2\cdot\nabla\da=\sum_{i=1}^3\dF_i,\\
\partial_t\du-\cA\du=\sum_{i=1}^{5}\dG_i,
\end{array}\right.
\end{equation}
$$\displaylines{\hbox{with}\hfill
\dF_1\eqdefa-\du\cdot\nabla a^1,\qquad
\dF_2\eqdefa -\da\,\div u^2,\qquad
\dF_3\eqdefa -(1+a^1)\div\du,\hfill\cr
\dG_1\eqdefa\bigl(I(a^1)-I(a^2)\bigr)\cA u^2,\quad
\dG_2\eqdefa -I(a^1)\,\cA\du,\quad
\dG_3\eqdefa\nabla(G(a^1)-G(a^2)),\hfill\cr
\dG_4\eqdefa-u^2\cdot\nabla\du,\qquad
\dG_5\eqdefa-\du\cdot\nabla u^1.}
$$
Mimicking the computations of Step 3, it is easy to see that if \eqref{eq:condpd} is fulfilled then 
$(\da,\du)\equiv 0$ in $\cC([0,T];\dot B^{\frac dp-1}_{p,1})\times F_p(T).$ 
\smallbreak
 It turns out that the limit case $d=2$ or $p=d$   tractable even though
\begin{equation}\label{eq:uniq1}
\Bigl(\da\in\dot B^0_{d,1}\ \hbox{ and }\ \cA u^2\in\dot B^0_{d,1}\Bigr)
\quad\hbox{implies}\quad \da\cA u^2\in\dot B^{-1}_{d,\infty}\quad\hbox{only}.
\end{equation}
Now, applying Theorem \ref{th:transport} and product laws (see Proposition \ref{p:op}) gives 
$$
\|\da\|_{L^\infty_T(\dot B^{0}_{d,\infty})}\leq\Bigl(
 \|\da(0)\|_{\dot B^{0}_{d,\infty}}+
 (1+\|a^1\|_{L_T^\infty(\dot B^1_{d,1})})\|\du\|_{L_T^1(\dot B^1_{d,1})}\Bigr)
 e^{\|u^2\|_{L_T^1(\dot B^2_{d,1})}}.
$$
Regarding $\du,$ owing to \eqref{eq:uniq1}, one has to apply  Remark \ref{r:heat} 
rather than   Theorem \ref{th:heat}, which enables us to  control  the following quantity:
$$
\|\du\|_{\tilde L_T^1(\dot B^1_{d,\infty})}\eqdefa
\sup_j 2^j\|\ddj\du\|_{L_T^1(L^d)},
$$
which is slightly weaker than
$\|\du\|_{L_T^1(\dot B^1_{d,1})}$. 
\smallbreak
Inserting   the following logarithmic interpolation inequality (see \cite{D2}):
\begin{equation}\label{eq:loginterpo}
\|\du\|_{L_T^1(\dot B^1_{d,1})}\lesssim
\|\du\|_{\tilde L_T^1(\dot B^1_{d,\infty})}
\log\biggl(e+\frac{\|\du\|_{\tilde L_T^1(\dot B^{0}_{d,\infty})}
+\|\du\|_{\tilde L_T^1(\dot B^{2}_{d,\infty})}}
{\|\du\|_{\tilde L_T^1(\dot B^1_{d,\infty})}}\biggr)
\end{equation}
 in the estimate for $\da$ and using Osgood lemma
(see e.g. \cite{BCD}, Chap. 3), we end up  with 
$$
\|\da\|_{L^\infty_t(\dot B^{0}_{d,\infty})}\!+\!
\|\du\|_{L^\infty_t(\dot B^{-1}_{d,\infty})\cap
\tilde L^1_t(\dot B^{1}_{d,\infty})}
\!\lesssim\! \Bigl(\|\da(0)\|_{\dot B^{0}_{d,\infty}}
\!+\!\|\du(0)\|_{\dot B^{-1}_{d,\infty}}\Bigr)^{{\rm exp}(-\int_0^t\alpha\,d\tau)}
$$
where $\alpha$ is in $L^1(0,T)$ and depends  only on the high norms 
of the two solutions. This yields uniqueness on $[0,T].$ 


\subsection{The Lagrangian approach}

We now propose another proof of the local well-posedness of \eqref{eq:nsc}, 
which will provide us with the statement of Theorem \ref{th:small1} in its full generality. 
It is based on the Lagrangian formulation of the system under consideration. 

To make it more precise, we  need to introduce more  notation.
First, we agree that for a $C^1$ function $F:\R^d\rightarrow \R^d\times\R^m$ then 
$\div F:\R^d\rightarrow\R^m$ is defined by
$$
(\div F)^j\eqdefa\sum_{i=1}^d\partial_iF_{ij}\quad\hbox{for }\ 1\leq j\leq m,
$$ 
and that  for $A=(A_{ij})_{1\leq i,j\leq d}$ and  $B=(B_{ij})_{1\leq i,j\leq d}$
 two $d\times d$ matrices, $$A:B=\Tr AB=\sum_{i,j} A_{ij} B_{ji}.$$ 
 The notation $\adj(A)$ designates the
 adjugate matrix that is the transposed cofactor matrix of $A.$ 
 Of course if $A$ is invertible then we have
 $\adj(A)=(\det A)\:A^{-1}.$
 Finally, given some matrix $A,$ we define the ``twisted'' deformation tensor
 and divergence operator (acting on vector fields $z$) by the formulae
 $$
 D_A(z)\eqdefa\frac12\bigl(Dz\cdot A+{}^T\!A\cdot\nabla z\bigr)\quad\hbox{and}\quad
 \divA z\eqdefa{}^T\!A:\nabla z=Dz:A.
  $$
  We recall the following classical result (see the proof in e.g. \cite{D7}).
  \begin{lemma}\label{l:div} Let  $K$ be   a $C^1$ scalar function over
$\R^d$ and $H,$  a $C^1$  vector-field.
  Let $X$ be a $C^1$ diffeomorphism such that  $J\eqdefa\det(D_yX)>0.$ Then we have
\begin{eqnarray}
\label{eq:div2}
&&\overline{\nabla_xK}=J^{-1}\divy(\adj(D_yX)\bar K),\\\label{eq:div1}
&&\overline{\divx  H}=J^{-1}\divy(\adj(D_yX)\bar H).\end{eqnarray}
\end{lemma}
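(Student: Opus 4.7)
The plan is to reduce both identities to the classical \emph{Piola identity}
$$\divy\bigl(\adj(D_yX)\bigr)=0,$$
which asserts that the columns of the cofactor matrix are divergence-free (interpreted in the paper's convention that $\div$ acts on the first index of a matrix field). Once this identity is available, both \eqref{eq:div2} and \eqref{eq:div1} follow by a direct product-rule expansion combined with the algebraic relation $\adj(D_yX)\cdot D_yX=D_yX\cdot\adj(D_yX)=J\,\Id$.

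For \eqref{eq:div2}, I would expand
$$\divy\bigl(\adj(D_yX)\bar K\bigr)^j=\sum_i\partial_{y_i}\bigl(\adj(D_yX)_{ij}\,\bar K\bigr)=\bar K\bigl(\divy \adj(D_yX)\bigr)^j+\sum_i\adj(D_yX)_{ij}\,\partial_{y_i}\bar K.$$
The first piece vanishes by Piola. For the second, the chain rule gives $\partial_{y_i}\bar K=\sum_k(\partial_{x_k}K)\circ X\cdot\partial_{y_i}X_k$, so the remaining sum becomes $\sum_k(\partial_{x_k}K)\circ X\sum_i(D_yX)_{ki}\adj(D_yX)_{ij}$, and the inner sum equals $J\,\delta_{kj}$ by the adjugate identity. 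This produces $J\,\overline{\partial_{x_j}K}$, which after division by $J$ yields \eqref{eq:div2}. The argument for \eqref{eq:div1} is essentially the same: expand $\divy(\adj(D_yX)\bar H)=\sum_{i,j}\partial_{y_i}(\adj(D_yX)_{ij}\bar H^j)$, kill the Piola term, and contract the remaining indices via the chain rule $\partial_{y_i}\bar H^j=\sum_k(\partial_{x_k}H^j)\circ X\cdot\partial_{y_i}X_k$ and the relation $\sum_i(D_yX)_{ki}\adj(D_yX)_{ij}=J\delta_{kj}$ to recover $J\,\overline{\divx H}$.

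The only genuine ingredient beyond bookkeeping is the Piola identity itself. I would prove it component by component: for each fixed $j$, the $j$-th column $C^j(y)=(\adj(D_yX)_{ij})_{1\le i\le d}$ is a polynomial in the entries of $D_yX$, and a short computation (expanding $\det$ along a column, or differentiating $\det(D_yX+\varepsilon M)$ and exploiting equality of mixed partials $\partial_{y_i}\partial_{y_k}X_\ell=\partial_{y_k}\partial_{y_i}X_\ell$) shows $\sum_i\partial_{y_i}C^j_i\equiv 0$. Alternatively one can give a slicker derivation by testing the identity against an arbitrary smooth compactly supported function and using the change of variables formula, but the pointwise algebraic proof is cleaner in the $C^1$ setting assumed here.

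The main obstacle, if any, is really just keeping track of conventions: the paper's $\div$ acts on the first index of a matrix, and the adjugate satisfies $A\adj(A)=\adj(A)A=(\det A)\Id$, so one has to be careful which factor is transposed when pushing the chain rule through. Beyond that, the proof is a half-page linear-algebraic calculation once Piola is in hand.
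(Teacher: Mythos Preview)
Your argument via the Piola identity is correct and is the standard route to these formulae. Note that the paper does not actually supply a proof of this lemma; it merely cites \cite{D7}, so there is no in-paper argument to compare against.

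One small caveat: your remark that ``the pointwise algebraic proof [of Piola] is cleaner in the $C^1$ setting assumed here'' is slightly off. Computing $\divy(\adj(D_yX))$ pointwise requires differentiating the cofactors of $D_yX$, hence second derivatives of $X$; the cancellation that makes Piola work relies precisely on the symmetry $\partial_{y_i}\partial_{y_k}X_\ell=\partial_{y_k}\partial_{y_i}X_\ell$. For a diffeomorphism that is only $C^1$, the product-rule expansion you write down is formal. The clean fix is the one you mention as an alternative: test against a smooth compactly supported $\phi$, use the change-of-variables formula $\int \phi\, J\,\overline{\divx H}\,dy=\int (\phi\circ X^{-1})\,\divx H\,dx$, integrate by parts in $x$, and change variables back. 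That argument only uses $X\in C^1$ and $J>0$. Alternatively, prove the identity for $C^2$ maps by your pointwise calculation and pass to $C^1$ by approximation. Either way the substance of your proof is fine; just be careful about which version of the argument actually matches the stated hypotheses.
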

  Let $X$ be the flow associated to the vector-field $u,$ that is
the solution to 
\begin{equation}\label{lag}
X(t,y)=y+\int_0^tu(\tau,X(\tau,y))\,d\tau.
\end{equation}
Let 
$\bar\varrho(t,y)\eqdefa\varrho(t,X(t,y))\ \hbox{ and }\ \bar u(t,y)=u(t,X(t,y)).$
Formally,   we see from  the chain rule and Lemma \ref{l:div} above that $(\varrho,u)$ satisfies \eqref{eq:nsbaro}
if and only if  $(\bar\varrho,\bar u)$  fulfills
 \begin{equation}\label{eq:lagrangian}
\left\{\begin{array}{l}
\d_t(J\bar\varrho)=0\\
\varrho_0\d_t\bar u-\div\Bigl(\adj(DX)\bigl(2\mu(\bar\varrho)D_A(\bar u)
+\lambda(\bar\varrho)\divA\bar u\,\Id+P(\bar\varrho)\Id\bigr)\Bigr)=0\end{array}\right.
\end{equation}
with  $J\eqdefa\det DX,$  $A\eqdefa(D_yX)^{-1}$  and  
\begin{equation}\label{eq:lag}
X(t,y)=y+\int_0^t\bar u(\tau,y)\,d\tau.
\end{equation}
The first equation means that $\varrho=J^{-1}\varrho_0,$ 
and the velocity equation thus recasts in: 
$$
L_{\varrho_0}(\bar u)= \varrho_0^{-1}\div\bigl(I_1(\bar u,\bar u)+I_2(\bar u,\bar u)
+I_3(\bar u,\bar u)+I_4(\bar u)\bigr)
$$
 with 
 \begin{equation}\label{eq:L}
 L_{\varrho_0}(u)\eqdefa\d_t u-\varrho_0^{-1}\div\bigl(2\mu(\varrho_0)D(u)+\lambda(\varrho_0)\div u\,\Id\bigr)
 \end{equation}
 and 
 $$
 \begin{array}{lll}
 I_1(v,w)&\eqdefa&(\adj(DX_v)-\Id)\bigl(\mu(J_v^{-1}\varrho_0)(D w\, A_v+{}^T\!A_v\,\nabla w)
+\lambda(J_v^{-1}\varrho_0)({}^T\!A_v:\nabla w)\Id\bigr)\\
 I_2(v,w)&\eqdefa&(\mu(J_v^{-1}\varrho_0)-\mu(\varrho_0))(D w\, A_v+{}^T\!A_v\,\nabla  w)
 +(\lambda(J_v^{-1}\varrho_0)-\lambda(\varrho_0))({}^T\!A_v:\nabla w)\Id\\
 I_3(v,w)&\eqdefa&\mu(\varrho_0)\bigl(D w(A_v-\Id)+{}^T\!(A_v-\Id)\nabla w\bigr)
 +\lambda(\varrho_0)({}^T\!(A_v-\Id):\nabla w)\Id \\
 I_4(v)&\eqdefa&-\adj(DX_v)P(\varrho_0J_v^{-1}),
 \end{array}
$$
where $X_v$ is given by \eqref{lag} with $v$ instead of $u,$ $A_v\eqdefa (DX_v)^{-1}$
and $J_v\eqdefa\det DX_v.$
\medbreak
So finally,  in order to solve \eqref{eq:lagrangian} locally, it suffices to show that the map 
\begin{equation}\label{eq:Phi}
\Phi: v\longmapsto  u
\end{equation}
with $u$ the solution to 
$$\left\{\begin{array}{l}
L_{\varrho_0}(u)= \varrho_0^{-1}\div\bigl(I_1( v, v)+I_2( v, v)
+I_3(v,v)+I_4(v)\bigr),\\
u|_{t=0}=u_0\end{array}\right.$$
 has a fixed point in $E_p(T)$ for small enough $T.$ 
 \medbreak
In order to treat  the case where $\varrho$  is just bounded away 
from zero,   we need to generalize \eqref{eq:maxreglame} to the following
 Lam\'e system with \emph{nonconstant} coefficients:
\begin{equation}\label{eq:lame-var}
\d_tu-2a\div(\mu D(u))-b\nabla(\lambda\div u)=f,
\end{equation}
where $a,$ $b,$ $\lambda$ and $\mu$ satisfy  the following uniform ellipticity condition: 
\begin{equation}\label{eq:ellipticity1}
\alpha\eqdefa\min\Bigl(\inf_{(t,x)\in[0,T]\times\R^d} (a\mu)(t,x),\inf_{(t,x)\in[0,T]\times\R^d}
(2a\mu+b\lambda)(t,x)\Bigr)>0.
\end{equation}
In \cite{D7}, the following statement has been proved.
\begin{proposition}\label{p:lamerough} 
Let  $a,$  $b,$ $\lambda$ and $\mu$  be  bounded functions  satisfying  \eqref{eq:ellipticity1}. Assume that
$a\nabla \mu,$ $b\nabla\lambda,$ $\mu\nabla a$ and $\lambda\nabla b$ are in $L^\infty(0,T;\dot B^{\frac dp-1}_{p,1})$ for some $1<p<\infty.$
There exist    $\eta>0$ and $\alpha>0$ such that if for some $m\in\Z$ we have 
\begin{align}\label{eq:positivebis}
&\min\Bigl(\inf_{(t,x)\in[0,T]\times\R^d} \dot S_m(2a\mu+b\lambda)(t,x),
\inf_{(t,x)\in[0,T]\times\R^d} \dot S_m(a\mu)(t,x)\Bigr)
\geq\Frac\alpha2,\\
\label{eq:smallabis}
&\|(\Id-\dot S_m)(\mu\nabla a,a\nabla \mu,\lambda\nabla b,b\nabla\lambda)\|_{L^\infty_T(\dot B^{\frac dp-1}_{p,1})}\leq \eta\alpha
\end{align}
then the solutions to \eqref{eq:lame-var} satisfy for all $t\in[0,T],$
$$\displaylines{\quad
\|u\|_{L^\infty_t(\dot B^s_{p,1})}+\alpha\|u\|_{L^1_t(\dot B^{s+2}_{p,1})}
\hfill\cr\hfill\leq C \bigl(\|u_0\|_{\dot B^s_{p,1}}+\|f\|_{L_t^1(\dot B^s_{p,1})}\bigr)
\exp\biggl(\frac C{\alpha}\int_0^t\|\dot S_m(\mu\nabla a,a\nabla \mu,\lambda\nabla b,b\nabla\lambda)\|^2_{\dot B^{\frac dp}_{p,1}}\,d\tau\biggr)\quad}
$$
whenever $-\min(d/p,d/p')<s\leq d/p-1.$
\end{proposition}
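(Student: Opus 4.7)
The strategy is to localize in frequency via $\ddj$ and freeze the coefficients at scale $2^{-m}$ via $\dot S_m$, then to perform an $L^p$ energy estimate on each dyadic block. Ellipticity of the frozen low-frequency operator (hypothesis \eqref{eq:positivebis}) provides the coercive second-order term, while the smallness of the high-frequency remnants (hypothesis \eqref{eq:smallabis}) lets us absorb perturbations into that coercive term.

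First I would expand by Leibniz:
\begin{equation*}
\d_t u - a\mu\,\Delta u - (a\mu+b\lambda)\,\nabla\div u = f + 2aD(u)\nabla\mu + b(\div u)\nabla\lambda.
\end{equation*}
Applying $\ddj$ and inserting $\dot S_m$ in front of each second-order coefficient, I rewrite the result as
\begin{equation*}
\d_t\ddj u - \dot S_m(a\mu)\Delta\ddj u - \dot S_m(a\mu+b\lambda)\nabla\div\ddj u = \ddj f + R_j,
\end{equation*}
where $R_j$ collects (i) the commutators $[\ddj,\dot S_m(a\mu)]\Delta u$ and $[\ddj,\dot S_m(a\mu+b\lambda)]\nabla\div u$, (ii) the high-frequency coefficient contributions $(\Id-\dot S_m)(a\mu)\,\ddj\Delta u$ and its $\nabla\div$ analogue, and (iii) the first-order Leibniz terms involving $\mu\nabla a$, $a\nabla\mu$, $\lambda\nabla b$, $b\nabla\lambda$.

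For the block estimate I would test against $|\ddj u|^{p-2}\ddj u$ and integrate by parts. Since $\dot S_m(a\mu)$ varies slowly on the scale $2^{-j}$ of the Fourier support of $\ddj u$ whenever $j\gg m$, a Bernstein argument combined with the pointwise Lam\'e ellipticity from \eqref{eq:positivebis} yields, after dividing by $\|\ddj u\|_{L^p}^{p-1}$,
\begin{equation*}
\frac{d}{dt}\|\ddj u\|_{L^p} + c\alpha\,2^{2j}\|\ddj u\|_{L^p} \leq \|\ddj f\|_{L^p} + \|R_j\|_{L^p}.
\end{equation*}
The low blocks $j\lesssim m$ are handled by the same energy method, using only coercivity of the frozen operator and treating the scale-$2^{-m}$ variation of the coefficient as a first-order perturbation. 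The remainder $R_j$ is then estimated as in the proof of Theorem \ref{th:transport}: Bony's decomposition gives $\|[\ddj,\dot S_m(a\mu)]\Delta u\|_{L^p}\lesssim c_j(t)2^{-js}\|\dot S_m(\mu\nabla a,a\nabla\mu,\lambda\nabla b,b\nabla\lambda)\|_{\dot B^{d/p}_{p,1}}\|u\|_{\dot B^{s+1}_{p,1}}$ with $\sum_j c_j=1$; the high-frequency pieces are bounded by $\eta\alpha\,c_j\,2^{(2-s)j}\|u\|_{\dot B^s_{p,1}}$ via \eqref{eq:smallabis} and standard embeddings, and are absorbed on the left for $\eta$ small; the Leibniz first-order terms are controlled by the product laws of Proposition \ref{p:op}. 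The restriction $-\min(d/p,d/p')<s\leq d/p-1$ is precisely what makes all these product and commutator estimates close.

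Multiplying by $2^{js}$, summing in $j\in\Z$, and using the interpolation $\|u\|_{\dot B^{s+1}_{p,1}}\leq\|u\|_{\dot B^s_{p,1}}^{1/2}\|u\|_{\dot B^{s+2}_{p,1}}^{1/2}$ followed by Young's inequality to trade a fraction of $\|u\|_{\dot B^{s+2}_{p,1}}$ against $\alpha^{-1}\|\dot S_m(\cdots)\|_{\dot B^{d/p}_{p,1}}^2\|u\|_{\dot B^s_{p,1}}$ absorbs the commutator contribution into the coercive term on the left and produces the quadratic factor in the Gronwall exponent. Integrating in time and applying Gronwall's lemma then yields the stated estimate. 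The main obstacle is the $L^p$ energy estimate for the \emph{vector} Lam\'e block: $-c_1\Delta-c_2\nabla\div$ couples components through $\nabla\div$ so Lemma \ref{l:heat} does not apply directly. I would bypass this by splitting $\ddj u=\cP\ddj u+\cQ\ddj u$, on which the frozen operator decouples into the scalar heat-type operators $-c_1\Delta$ and $-(c_1+c_2)\Delta$; since $\cP,\cQ$ commute with $\dot S_m(a\mu)$ only modulo a smoothing commutator, this generates one additional controllable contribution to $R_j$.
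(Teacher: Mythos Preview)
The paper does not prove Proposition~\ref{p:lamerough}; it simply cites \cite{D7} for the proof. So there is no in-paper argument to compare against. That said, your outline is essentially the strategy carried out in \cite{D7}: localize via $\ddj$, freeze the second-order coefficients at scale $2^{-m}$ via $\dot S_m$, use ellipticity of the frozen operator for the coercive term, treat the commutators by the paraproduct machinery of Theorem~\ref{th:transport}, absorb the $(\Id-\dot S_m)$ pieces using \eqref{eq:smallabis}, and close by interpolation plus Gronwall to produce the quadratic exponent.

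Two remarks on points you flag or gloss over. First, your identification of the main obstacle is correct: for $p\neq 2$ the direct energy method on the vector block $-c_1\Delta-c_2\nabla\div$ does not obviously yield coercivity, because $c_2$ need not be nonnegative. Your $\cP/\cQ$ fix is the right one, and the additional commutators $[\cP,\dot S_m(a\mu)]$, $[\cQ,\dot S_m(a\mu+b\lambda)]$ are indeed first-order (they gain one derivative against $\nabla\dot S_m(a\mu)$ by standard Fourier-multiplier commutator estimates) and fold into the same pile as the Leibniz terms. Second, after the split you still need the scalar ``generalized Bernstein'' inequality $\int c\,|\nabla w|^2|w|^{p-2}\,dx\gtrsim \alpha\,2^{2j}\|w\|_{L^p}^p$ for $w$ spectrally supported in the annulus $|\xi|\sim 2^j$ and $c\geq\alpha/2$; this is the key technical ingredient that replaces Lemma~\ref{l:heat} in the variable-coefficient setting, and is what \cite{D7} invokes. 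With that lemma in hand, your sketch closes.
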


In order to show that $\Phi$ in \eqref{eq:Phi} admits a fixed point in $E_p(T),$ 
we introduce, as in the previous subsection,  the solution $u_L$ 
in $E_p(T)$  to
 $$
 L_1 u_L=0,\qquad  u|_{t=0}=u_0.
 $$
 We want to apply Banach fixed point theorem to $\Phi$  in some suitable 
 closed ball $\bar B_{E_p(T)}(u_L,R).$ 
 Let $v$ be in  $\bar B_{E_p(T)}(u_L,R)$ and $u\eqdefa\Phi(v).$  
 Denoting $\tilde u\eqdefa u-u_L,$ we see that 
\begin{equation}\label{eq:tildeu}
 \left\{\begin{array}{l}
 L_{\varrho_0}\tilde u=\varrho_0^{-1}\div\bigl(I_1(v,v)+I_2(v,v)
 +I_3(v,v)+I_4(v)\bigr)+(L_1-L_{\varrho_0})u_L,\\
 \tilde u|_{t=0}=0.\end{array}\right.
 \end{equation}
The existence of  some $m\in\Z$ so that 
 $$
\displaylines{\min\biggl(\inf_{\R^d}\dot S_m\Bigl(2\frac{\mu(\varrho_0)}{\varrho_0}+\frac{\lambda(\varrho_0)}{\varrho_0}\Bigr),
\inf_{\R^d}\dot S_m\Bigl(\frac{\mu(\varrho_0)}{\varrho_0}\Bigr)\biggr)>\frac\alpha2\cr
\hbox{and}\quad
\Bigl\|(\Id-\dot S_m)\Bigl(\frac{\mu(\varrho_0)}{\varrho_0^2}\nabla\varrho_0,\frac{\mu'(\varrho_0)}{\varrho_0}\nabla\varrho_0,\frac{\lambda(\varrho_0)}{\varrho_0^2}\nabla\varrho_0,\frac{\lambda'(\varrho_0)}{\varrho_0}\nabla\varrho_0\Bigr)\Bigr\|_{\dot B^{\frac dp-1}_{p,1}}\leq\eta\alpha}
$$
 is ensured by the fact that all the coefficients (minus some constant) 
 belong to the space $\dot B^{\frac dp}_{p,1}$  which is defined in terms of  a convergent series
 and embeds continuously in the set of bounded continuous functions.
 Hence, if one can show that  the right-hand side of \eqref{eq:tildeu} is 
 in  $L^1(0,T;\dot B^{\frac dp-1}_{p,1})$ (which will be carried out in the next step) 
 then we will be allowed to apply 
  Proposition \ref{p:lamerough}   to bound  $\tilde u$ in $E_p(T).$


\subsubsection*{First step: Stability of $\bar B_{E_p(T)}(u_L,R)$}
Let $v\in \bar B_{E_p(T)}(u_L,R)$ and $\wt u$ be given by \eqref{eq:tildeu}. 
Let $a_0\eqdefa\varrho_0-1.$  Proposition \ref{p:lamerough}, product laws in Besov spaces and Proposition \ref{p:comp} imply that 
 \begin{multline}\label{eq:ut1}
 \|\tilde u\|_{E_p(T)}\leq Ce^{C_{\varrho_0}T}
\Bigl(\|(L_1-L_{\varrho_0})u_L\|_{L_T^1(\dot B^{\frac dp-1}_{p,1})}\\
+\bigl(1+\|a_0\|_{\dot B^{\frac dp}_{p,1}}\bigr)
\bigl(\|I_4(v)\|_{L_T^1(\dot B^{\frac dp}_{p,1})}+\sum_{i=1}^3 \|I_i(v,v)\|_{L_T^1(\dot B^{\frac dp}_{p,1})}  \bigr)\!\Bigr)
 \end{multline}
 for some constant $C_{\varrho_0}$ depending only on $\varrho_0.$
\medbreak
In what follows, we assume that $T$ and $R$ have been chosen so that, 
for a small enough positive constant $c,$
\begin{equation}\label{eq:smallv}
\int_0^T\|\nabla v\|_{\dot B^{\frac dp}_{p,1}}\,dt\leq c.
\end{equation}
Now, using the decomposition
$$\displaylines{\quad
(L_1-L_{\varrho_0})u_L=(\varrho_0^{-1}-1)\div\bigl(2\mu(\varrho_0)D(u_L)
+\lambda(\varrho_0)\div u_L\,\Id\bigr)\hfill\cr\hfill
+\div\bigl(2(\mu(\varrho_0)-\mu(1))D(u)+(\lambda(\varrho_0)-\lambda(1))\div u\,\Id\bigr),\quad}
$$
and Proposition \ref{p:comp},  we see that 
$(L_1-L_{\varrho_0})u_L\in L^1(0,T;\dot B^{\frac dp-1}_{p,1})$ and 
\begin{equation}\label{eq:L1}
\|(L_1-L_{\varrho_0})u_L\|_{L_T^1(\dot B^{\frac dp-1}_{p,1})}\lesssim 
\|a_0\|_{\dot B^{\frac dp}_{p,1}}(1+\|a_0\|_{\dot B^{\frac dp}_{p,1}})
\|Du_L\|_{L_T^1(\dot B^{\frac dp}_{p,1})}.
\end{equation}
Likewise, flow and composition estimates (see the appendix) ensure  that
\begin{align}\label{eq:II2}
\|I_i(v,w)\|_{L_T^1(\dot B^{\frac dp-1}_{p,1})}&\lesssim
(1+\|a_0\|_{\dot B^{\frac dp}_{p,1}})\|Dv\|_{L_T^1(\dot B^{\frac dp}_{p,1})}
\|Dw\|_{L_T^1(\dot B^{\frac dp}_{p,1})}\quad\hbox{for $i=1,2,3,$}\\\label{eq:II5}
\|I_4(v)\|_{L_T^1(\dot B^{\frac dp}_{p,1})}&\lesssim T(1+\|a_0\|_{\dot B^{\frac dp}_{p,1}})(1+\|Dv\|_{L_T^1(\dot B^{\frac dp}_{p,1})}).
\end{align}
So plugging the above inequalities in \eqref{eq:ut1} and keeping in mind
that  $v$  satisfies \eqref{eq:smallv}, we get after decomposing
$v$ into $\tilde v+u_L$: 
$$
\displaylines{\|\tilde u\|_{E_p(T)}\leq Ce^{C_{\varrho_0,m}T}
(1+\|a_0\|_{\dot B^{\frac dp}_{p,1}})^2\Bigl((T+\|a_0\|_{\dot B^{\frac dp}_{p,1}}\|Du_L\|_{L_T^1(\dot B^{\frac dp}_{p,1})})
\hfill\cr\hfill+\|Du_L\|_{L_T^1(\dot B^{\frac dp}_{p,1})}^2
+\bigl(\|Du_L\|_{L_T^1(\dot B^{\frac dp}_{p,1})}+\|D\tilde v\|_{L_T^1(\dot B^{\frac dp}_{p,1})}\bigr)\|D\tilde v\|_{L_T^1(\dot B^{\frac dp}_{p,1})}\Bigr).}
$$
Now, because $\tilde v\in\bar B_{E_p(T)}(0,R),$
$$
\displaylines{
\|\tilde u\|_{E_p(T)}\leq Ce^{C_{\varrho_0}T}(1+\|a_0\|_{\dot B^{\frac dp}_{p,1}})^2
\Bigl((T+\|a_0\|_{\dot B^{\frac dp}_{p,1}}\|Du_L\|_{L_T^1(\dot B^{\frac dp}_{p,1})})\hfill\cr\hfill
+(R+\|Du_L\|_{L_T^1(\dot B^{\frac dp}_{p,1})})\|Du_L\|_{L_T^1(\dot B^{\frac dp}_{p,1})}+R^2\Bigr).}
$$
Therefore, if we first choose $R$ so that for a small enough constant $\eta,$
\begin{equation}\label{eq:eta}(1+\|a_0\|_{\dot B^{\frac dp}_{p,1}})^2R\leq \eta
\end{equation}
 and then take $T$ so that 
\begin{equation}\label{eq:smalltime}
C_{\varrho_0}T\leq\log2,\quad T\leq R^2,\quad
\|a_0\|_{\dot B^{\frac dp}_{p,1}}\|Du_L\|_{L_T^1(\dot B^{\frac dp}_{p,1})}\leq R^2,\quad
\|Du_L\|_{L_T^1(\dot B^{\frac dp}_{p,1})}\leq R,
\end{equation}
then we may conclude that $\Phi$ maps $\bar B_{E_p(T)}(u_L,R)$ into itself.


\subsubsection*{Second  step: contraction estimates}
Let us now establish that, under Condition \eqref{eq:smalltime}, the map 
$\Phi$ is contractive. 
We consider two vector-fields $v^1$ and $v^2$ in $\bar B_{E_p(T)}(u_L,R),$ and 
set $u^1\eqdefa\Phi(v^1)$ and $u^2\eqdefa\Phi(v^2).$ Let $\du\eqdefa u^2-u^1$ and $\dv\eqdefa v^2-v^1.$
We have 
$$\displaylines{
L_{\varrho_0}\du=\varrho_0^{-1}\div\Bigl((I_1(v^2,v^2)-I_1(v^1,v^1))\hfill\cr\hfill+(I_2(v^2,v^2)-I_2(v^1,v^1)) +(I_3(v^2,v^2)-I_3(v^1,v^1)) +(I_4(v^2)-I_4(v^1))
\Bigr)\cdotp}$$
So  applying Proposition \ref{p:lamerough} (recall that $C_{\varrho_0}T\leq\log2$), we get
 \begin{multline}\label{eq:ut2}
 \!\!\!\!\!\|\du\|_{E_p(T)}\leq C(1+\|a_0\|_{\dot B^{\frac dp}_{p,1}})
 \Bigl(\sum_{i=1}^3\|I_i(v^2,v^2)-I_i(v^1,v^1)\|_{L_T^1(\dot B^{\frac dp}_{p,1})}\\
 +\|I_4(v^2)-I_4(v^1)\|_{L_T^1(\dot B^{\frac dp}_{p,1})}\Bigr)\cdotp
 \end{multline}
In order to deal with the first  term of the right-hand side, we use the decomposition
$$
\begin{array}{lll}I_1(v^2,v^2)-I_1(v^1,v^1)&=&\lambda(J_{v^2}^{-1}\varrho_0)\bigl({}^T\!A_{v^2}:\nabla v^2\bigr)\bigl(\adj(DX_{v^2})-\adj(DX_{v^1})\bigr)\\
&+&\bigl(\adj(DX_{v^1})-\Id\bigr)\bigl(\lambda(J_{v^2}^{-1}\varrho_0)-\lambda(J_{v^1}^{-1}\varrho_0)\bigr)\bigl({}^T\!A_{v^2}:\nabla v^2\bigr)\\
&+&\bigl(\adj(DX_{v^1})-\Id\bigr)\lambda(J_{v^1}^{-1}\varrho_0)\bigl(({}^T\!A_{v^2}-{}^T\!A_{v^1}):\nabla v^1+{}^T\!A_{v^2}:\nabla\dv\bigr)\\
&+&\hbox{terms pertaining to }\ \mu.\end{array}
$$
Taking advantage of product laws in Besov spaces, of  Proposition \ref{p:comp} 
and of the flow estimates in the appendix, we deduce that for some constant $C_{\varrho_0}$
depending only on~$\varrho_0$:
$$
\|I_1(v^2,v^2)-I_1(v^1,v^1)\|_{L_T^1(\dot B^{\frac dp}_{p,1})}\leq C_{\varrho_0}\|(Dv^1,Dv^2)\|_{L_T^1(\dot B^{\frac dp}_{p,1})}
\|D\dv\|_{L_T^1(\dot B^{\frac dp}_{p,1})}.
$$
Similar estimates may be proved for the next two terms 
 of the right-hand side of \eqref{eq:ut2}. Concerning the last one, we use the decomposition
 $$
 I_4(v^2)-I_4(v^1)\!=\!\bigl(\adj(DX_{v^1})-\adj(DX_{v^2})\bigr)\!P(J_{v^2}^{-1}\varrho_0)-\adj(DX_{v^1})\!
 \bigl(P(J_{v^2}^{-1}\varrho_0)-P(J_{v^1}^{-1}\varrho_0)\bigr).
 $$
 Hence
 $$
 \|I_4(v^2)-I_4(v^1)\|_{L_T^1(\dot B^{\frac dp}_{p,1})}\leq C(1+\|a_0\|_{\dot B^{\frac dp}_{p,1}})T\|D\dv\|_{L_T^1(\dot B^{\frac dp}_{p,1})}.
 $$
 We end up with 
$$
\displaylines{
 \|\du\|_{E_p(T)}\leq C(1+\|a_0\|_{\dot B^{\frac dp}_{p,1}})^2\bigl(T+\|(Dv^1,Dv^2)\|_{L_T^1(\dot B^{\frac dp}_{p,1})}
 \bigr)\|D\dv\|_{L_T^1(\dot B^{\frac dp}_{p,1})}.
}$$
Given that $v^1$ and $v^2$ are in $\bar B_{E_p(T)}(u_L,R),$
our hypotheses over $T$ and $R$ (with  smaller $\eta$ in \eqref{eq:eta} if need be) thus ensure that, 
$$
 \|\du\|_{E_p(T)}\leq \frac12 \|\dv\|_{E_p(T)}.
 $$
Hence  $\Phi$ admits a unique fixed point in $\bar B_{E_p(T)}(u_L,R).$


\subsubsection*{Third step: Regularity of the density}

Set  $\varrho\eqdefa J_u^{-1}\varrho_0.$
By construction $(\varrho,u)$ satisfies
\eqref{eq:lagrangian} and  $a\eqdefa\varrho-1$ is given by  
$$
a=(J_u^{-1}-1)a_0+a_0.
$$
{}From the appendix,  as $Du\in L^1(0,T;\dot B^{\frac dp}_{p,1}),$  we have  $J_u^{-1}-1$
 belongs to $\cC([0,T];\dot B^{\frac dp}_{p,1}).$
Hence $a$  is in $\cC([0,T];\dot B^{\frac dp}_{p,1}),$ too.
Because $\dot B^{\frac dp}_{p,1}$ is continuously embedded in $L^\infty,$    the density remains 
bounded away from $0$ on $[0,T]$ (taking $T$ smaller if needed).


\subsubsection*{Last step: Uniqueness and continuity of the flow map}

Let the data $(\varrho_0^1,u_0^1)$ and $(\varrho_0^2,u_0^2)$ fulfill  the assumptions
of Theorem \ref{th:small1}, and let  $(\varrho^1,u^1)$ and $(\varrho^2,u^2)$  be 
the corresponding solutions. Setting $\du\eqdefa u^2-u^1,$ we see that
$$\displaylines{
L_{\varrho_0^1}(\du)=(L_{\varrho_0^1}-L_{\varrho_0^2})(u^2)
+(\varrho_0^1)^{-1}\div\Bigl(\sum_{j=1}^3\bigl((I_j^2(u^2,u^2)-I_j^2(u^1,u^1)\bigr)+(I_4^2(u^2)-I_4^2(u^1))\Bigr)\hfill\cr\hfill
+(\varrho_0^1)^{-1}\div\Bigl(\sum_{j=1}^3((I_j^2-I_j^1)(u^1,u^1)+(I_4^2-I_4^1)(u^1)\Bigr),}$$
where $I_1^i,$ $I_2^i,$ $I_3^i$ and $I_4^i$ correspond to the quantities that have been defined just above \eqref{eq:Phi}, 
with density $\varrho_0^i.$ Compared to the  second step, the only definitely new terms are  $(L_{\varrho_0^1}-L_{\varrho_0^2})(u^2)$ 
and the last line. As regards $(L_{\varrho_0^1}-L_{\varrho_0^2})(u^2),$ we have for $t\leq T,$
$$
\|(L_{\varrho_0^1}-L_{\varrho_0^2})(u^2)\|_{L_t^1(\dot B^{\frac dp-1}_{p,1})}
\leq C_{\varrho_0^1,\varrho_0^2}\|\dr_0\|_{\dot B^{\frac dp}_{p,1}} \|Du^2\|_{L_t^1(\dot B^{\frac dp}_{p,1})}.
$$
The other new terms satisfy analogous estimates. 
Hence, applying Proposition \ref{p:lamerough} 
yields if  $\dr_0$ is small enough: $$
 \displaylines{
 \|\du\|_{E_p(t)}\leq C_{\varrho_0^1}\bigl((t+\|Du^1\|_{L_t^1(\dot B^{\frac dp}_{p,1})}
 +\|\du\|_{E_p(t)})\|\du\|_{E_p(t)}
 \hfill\cr\hfill+\|\du_0\|_{\dot B^{\frac dp}_{p,1}}+\|\dr_0\|_{\dot B^{\frac dp}_{p,1}}(t+ \|Du^1\|_{L_t^1(\dot B^{\frac dp}_{p,1})})\bigr).}
$$
 An obvious bootstrap argument thus shows that if $t,$  $\du_0$ and $\dr_0$ are small enough then 
 $$
  \|\du\|_{E_p(t)}\leq 2C_{\varrho_0}\bigl(\|\du_0\|_{\dot B^{\frac dp}_{p,1}}+\|\dr_0\|_{\dot B^{\frac dp}_{p,1}}\bigr).
  $$
    As regards the density, we have
$\da=J_{u^1}^{-1} \da_0+(J_{u^2}^{-1}-J_{u^1}^{-1})a_0^2.$
 Hence for all $t\in[0,T],$ 
 $$
 \|\da(t)\|_{\dot B^{\frac dp}_{p,1}}
 \leq C(1+\|Du^1\|_{L_t^1(\dot B^{\frac dp}_{p,1})})\|\da_0\|_{\dot B^{\frac dp}_{p,1}}\|D\du\|_{L_t^1(\dot B^{\frac dp}_{p,1})}.
 $$
So we  get  uniqueness and continuity of the flow map on a small  time interval. 
  Then iterating the proof yields uniqueness on the initial time interval $[0,T],$ as well as
  Lipschitz continuity of the flow map.

 It is now easy to conclude to  Theorem \ref{th:small1} in its full generality, as
a mere corollary of the following proposition which states the equivalence
of  Systems \eqref{eq:nsc} and \eqref{eq:lagrangian} in our functional  setting (see the proof in \cite{D7}).
 \begin{proposition}\label{p:equiv}
  Let $1\leq p< 2d.$ Assume that the couple $(\varrho,u)$ with $(\varrho-1)\in\cC([0,T];\dot B^{\frac dp}_{p,1})$
and $u\in E_p(T)$ is a solution to \eqref{eq:nsc} such that
\begin{equation}\label{eq:smallu}
\int_0^T\|\nabla u\|_{\dot B^{\frac dp}_{p,1}}\,dt\leq c.
\end{equation}  
Let $X$ be the flow of $u$ defined in \eqref{lag}. 
Then the couple $(\bar\varrho,\bar u)\eqdefa(\varrho\circ X,u\circ X)$ belongs
to the same functional space as $(\varrho,u),$ and satisfies \eqref{eq:lagrangian}. 
\medbreak
Conversely, if  $(\bar\varrho-1,\bar u)$ belongs to 
$\cC([0,T];\dot B^{\frac dp}_{p,1})\times E_p(T)$ and $(\bar\varrho,\bar u)$ satisfies \eqref{eq:lagrangian} 
and, for a small enough constant $c,$
\begin{equation}\label{eq:smallbaru}
\int_0^T\|\nabla \bar u\|_{\dot B^{\frac dp}_{p,1}}\,dt\leq c
\end{equation} then 
the map
$X_t\eqdefa X(t,\cdot)$ defined in \eqref{eq:lag} is a $ C^1$ diffeomorphism on $\R^d$
and the couple $(\varrho,u)(t)\eqdefa(\bar\varrho(t)\circ X_t^{-1},\bar u(t)\circ X_t^{-1})$ satisfies
\eqref{eq:nsc} and has the same regularity as $(\bar\varrho,\bar u).$
\end{proposition}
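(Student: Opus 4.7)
The plan is to pass between the two systems via the chain rule together with Lemma \ref{l:div}, using the smallness conditions \eqref{eq:smallu} and \eqref{eq:smallbaru} to guarantee that the flow $X_t$ is a $C^1$ diffeomorphism of $\R^d$ whose differential is a small perturbation of the identity in the critical space $\dot B^{\frac dp}_{p,1}$. Once this is in hand, composition by $X_t$ or $X_t^{-1}$ preserves the Besov regularity of the solutions, and the two systems become term-by-term equivalent.

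For the forward implication, I would first solve the ODE \eqref{lag} via Theorem \ref{th:transport} (or the classical Cauchy--Lipschitz argument, using $\dot B^{\frac dp}_{p,1}\hookrightarrow L^\infty$); the smallness \eqref{eq:smallu} then yields a bound of the form $\|DX - \Id\|_{L^\infty_T(\dot B^{\frac dp}_{p,1})}\leq Cc$, so $J\eqdefa\det DX$ stays close to $1$ and $X_t$ is a global $C^1$ diffeomorphism. The flow and composition estimates in the appendix then give $\bar\varrho - 1 \in \cC([0,T]; \dot B^{\frac dp}_{p,1})$ and $\bar u \in E_p(T)$. To derive \eqref{eq:lagrangian}, I would apply the chain rule and Lemma \ref{l:div}: differentiating $\bar\varrho=\varrho\circ X$ in time and combining with Jacobi's identity $\d_t J = J\,\divA \bar u$ (together with $\overline{\divx u}=\divA \bar u$) collapses the continuity equation $\d_t\varrho+\div(\varrho u)=0$ into $\d_t(J\bar\varrho)=0$; and the momentum equation, once composed with $X$, is rewritten through identities \eqref{eq:div1}--\eqref{eq:div2} to move all spatial derivatives onto $\adj(DX)$, producing exactly the second line of \eqref{eq:lagrangian}.

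For the converse direction, I would run the same flow/composition argument in reverse: the smallness \eqref{eq:smallbaru} ensures that $X_t$ defined in \eqref{eq:lag} is a $C^1$ diffeomorphism of $\R^d$ with $A-\Id=(DX)^{-1}-\Id$ small in $\dot B^{\frac dp}_{p,1}$, so that the inverse map $X_t^{-1}$ is also a Besov-small perturbation of the identity. Composition by $X_t^{-1}$ then transfers $(\bar\varrho,\bar u)$ to $(\varrho,u)$ within the same regularity class, and applying the chain rule and Lemma \ref{l:div} \emph{backwards} converts the two equations of \eqref{eq:lagrangian} into those of \eqref{eq:nsc}.

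The main technical obstacle is the composition estimate in the critical space $\dot B^{\frac dp}_{p,1}$: this index is borderline with respect to $L^\infty$, so a generic Lipschitz change of variables would not preserve the space -- one really needs the diffeomorphism to be an $\dot B^{\frac dp}_{p,1}$-perturbation of the identity, with composition bounds that depend quantitatively on $\|DX-\Id\|_{L^\infty_T(\dot B^{\frac dp}_{p,1})}$. This is precisely why the smallness assumptions \eqref{eq:smallu} and \eqref{eq:smallbaru} appear in the statement, and why the careful flow and composition estimates deferred to the appendix are the true engine of the proof.
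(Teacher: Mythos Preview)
Your sketch is correct and matches the approach of \cite{D7}, to which the paper explicitly defers for the proof of this proposition (no proof is given here). Two minor remarks: first, the flow ODE \eqref{lag} is not a transport equation, so Theorem \ref{th:transport} is not the right tool---the Cauchy--Lipschitz argument you mention parenthetically is the relevant one, with the Lipschitz bound coming from $\nabla u\in L^1_T(\dot B^{\frac dp}_{p,1})\hookrightarrow L^1_T(L^\infty)$. Second, the appendix of the present paper records only the flow estimates (bounds on $DX-\Id$, $A-\Id$, $\adj(DX)-\Id$, $J^{\pm1}-1$), not the composition estimates asserting that $f\mapsto f\circ X_t$ and $f\mapsto f\circ X_t^{-1}$ preserve $\dot B^{\frac dp}_{p,1}$ and $E_p(T)$; these require a separate argument (paralinearization of the composition, carried out in \cite{D7} and \cite{DM}) and are, as you correctly flag in your last paragraph, the genuine technical core.
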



\section{The global existence issue}\label{s:global}

This section is devoted to the proof of global existence 
of strong solutions for small perturbations of the constant state $(\varrho,u)=(1,0),$ 
 under the stability assumption $P'(1)>0.$ For simplicity,  we  assume that
the viscosity functions $\lambda$ and $\mu$ are constant. 
 
 Let us emphasize that the approach  we used so far to solve \eqref{eq:nsc}
 cannot provide us with  global-in-time estimates  (even if both $a_0$ and $u_0$ are small)
 because we completely ignored the coupling between the mass and momentum 
 equation through the pressure term and looked at it as a low order source term, 
 just writing 
  $$
\d_tu-\cA u=-u\cdot\nabla u-I(a)\cA u-\nabla(G(a)).
$$ 
Then, applying   Inequality \eqref{eq:maxreglame} and product laws in Besov spaces led to 
\begin{equation}\label{eq:uu}
\|u\|_{E_p(t)}\leq C\biggl(\|u_0\|_{\dot B^{\frac dp-1}_{p,1}}
+\|u\|_{E_p(t)}\bigl(\|a\|_{L^\infty_t(\dot B^{\frac dp}_{p,1})}+\|u\|_{E_p(t)}\bigr)
+\Int_0^t\|a\|_{\dot B^{\frac dp}_{p,1}}\,d\tau\biggr)\cdotp
\end{equation}
At the same time, as $a$ is  a solution to a transport equation, 
we can only get   bounds on  
$\|a\|_{L_t^\infty(\dot B^{\frac dp}_{p,1})}$ and  the last term
 of \eqref{eq:uu} is thus out of control for  $t\to+\infty.$


\subsection{The linearized compressible Navier-Stokes system, and main result}

The key to proving global results is a refined analysis of the linearized system \eqref{eq:nsc}   about  $(a,u)=(0,0)$
taking the coupling between the mass and momentum equation through the pressure term  into account. 
The system in question reads: 
\begin{equation}\label{eq:LPH}
\left\{\begin{array}{l}\d_ta+\div u=f,\\
\d_tu-\mu\Delta u-(\lambda+\mu)\nabla\div u+P'(1)\nabla a=g.
\end{array}\right.
\end{equation}
 Applying the orthogonal projectors    $\cP$ and $\cQ$  over divergence-free and potential vector-fields, respectively, 
 to the second equation,
and setting $\alpha\eqdefa P'(1)$ and $\nu\eqdefa\lambda+2\mu$, System \eqref{eq:LPH} translates into
\begin{equation}\label{eq:LPHa}
\left\{\begin{array}{l}
\d_ta+\div\cQ u=f,\\
\d_t \cQ u-  \nu\Delta \cQ u+\alpha\nabla a=\cQ g,\\
\d_t \cP u-\mu \Delta \cP u=\cP g.
\end{array}\right.
\end{equation}
We see that $\cP u$ satisfies an ordinary heat equation, which is  uncoupled from $a$ and $\cQ u.$
For studying  the coupling between $a$ and $\cQ u,$ it
is convenient to set $v\eqdefa|D|^{-1}\div u$ (with $\cF(|D|^s u)(\xi)\eqdefa |\xi|^s\wh u(\xi)$), 
keeping in mind  that, according  to  \eqref{eq:fm},  bounding $v$ or $\cQ u$  is equivalent,
as one can go from $v$ to $\cQ u$ or from $\cQ u$ to $v$ by means of
a $0$ order homogeneous Fourier multiplier. 

For notational simplicity, we assume from now on 
that\footnote{Which is not restrictive as  the rescaling
\begin{equation}\label{eq:change}\textstyle
a(t,x)= \tilde a\bigl(\frac\alpha\nu\, t, \frac{\sqrt{\alpha}}\nu\: x\bigr) \quad\hbox{and}\quad
 u(t,x)=\sqrt{\alpha}\:\tilde u \bigl(\frac\alpha\nu\, t, \frac{\sqrt{\alpha}}\nu\: x\bigr) \end{equation}
 ensures that $(\wt a,\wt u)$ satisfies \eqref{eq:LPHa} with $\alpha=\nu=1.$}  $\alpha=\nu=1.$
Hence  $(a,v)$ satisfies the following $2\times 2$ system:
\begin{equation}\label{eq:nscl}
\left\{\begin{array}{l}
\d_ta+|D| v=f,\\
\d_tv-\Delta v-|D|a=h\eqdefa|D|^{-1}\div g.
\end{array}\right.\end{equation}
Taking the Fourier transform with respect to $x,$ and denoting  $\rho\eqdefa|\xi|$ with $\xi\in\R^d$ the 
Fourier variable, System  \eqref{eq:nscl} translates  into 
\begin{equation}\label{eq:nsclfourier}
\frac d{dt}\left(\begin{array}{c}\hat{a}\\\hat{v}\end{array}\right)=
M_\rho\left(\begin{array}{c}\hat{a}\\\hat{v}\end{array}\right)+\left(\begin{array}{c}\hat{f}\\\hat{h}\end{array}\right)  \quad\hbox{with}\quad
M_\rho\eqdefa \left(\begin{array}{cc}0&-\rho\\ \rho&-\rho^2\end{array}\right)\cdotp
\end{equation}
\begin{itemize}
\item In the low frequency regime $\rho<2,$ $M_\rho$ has  two \emph{complex conjugated} eigenvalues:
$$
\lambda_\pm(\rho)\eqdefa -\frac{\rho^2}{2}(1\pm i S(\rho))\quad\hbox{with}\quad 
 S(\rho)\eqdefa\sqrt{\frac{4}{\rho^2}-1}
$$
which have real part $-\rho^2/2,$ exactly as for the heat equation with diffusion $1/2.$
\item  In the  high frequency regime $\rho>2,$ there are two distinct real eigenvalues:
$$
\lambda_\pm(\rho)\eqdefa -\frac{\rho^2}{2}(1\pm R(\rho))\quad\hbox{with}\quad  R(\rho)\eqdefa\sqrt{1-\frac{4}{\rho^2}}\,\cdotp
$$
 As $1-R(\rho)\sim  2/\rho^2$ for $\rho\to+\infty,$ we have 
 $\lambda_+(\rho)\sim -\rho^2$ and $\lambda_-(\rho)\sim -1\cdotp$
In other words, a parabolic and a damped mode coexist. 
\end{itemize}
Optimal a priori estimates may be easily derived by computing 
the explicit solution of  \eqref{eq:nscl} explicitly in the Fourier space. 
Below, we present an alternative method which is  generalizable to much more complicated systems where explicit computations are no longer possible (see e.g. \cite{DD}).

Fix some $\rho\geq0$ and consider the corresponding solution $(A,V)$ of  \eqref{eq:nsclfourier}  in the case $\wh f=\wh h=0$. 
We easily get the following three identities:
\begin{eqnarray}
&&\frac12\frac d{dt}|A|^2+\rho \Re(A\bar V)=0,\\
&&\frac12\frac d{dt}|V|^2+\rho^2|V|^2-\rho\Re(A\bar V)=0,\\
&&\frac d{dt} \Re(A\bar V)+\rho|V|^2-\rho|A|^2+\rho^2\Re(A\bar V)=0,
\end{eqnarray}
from which we deduce
\begin{equation}\label{eq:Lrho0}
\frac12\frac d{dt}\cL_\rho^2+\rho^2|(A,V)|^2=0
\quad\hbox{with}\quad
\cL_\rho^2\eqdefa 2|(A,V)|^2+|\rho A|^2-2\rho\Re(A\bar V).
\end{equation}
Using Young inequality, we discover that there exists some constant $C_0>0$ independent of $\rho$ so that
\begin{equation}\label{eq:equivLrho}
C_0^{-1}\cL_\rho^2\leq |(A,\rho A,V)|^2\leq C_0\cL_\rho^2.
\end{equation}
Combining with \eqref{eq:Lrho0}, we conclude that there exists a universal constant $c_0>0$
so that 
\begin{equation}\label{eq:Lrho}
\cL_\rho^2(t)\leq e^{-c_0\min(1,\rho^2)t}\cL_\rho^2(0)\quad\hbox{for all }\  t\geq0.
\end{equation}
In the case of general  source terms $\wh f$ and $\wh h$ in \eqref{eq:nsclfourier}, 
using Duhamel formula and applying the above inequality to 
$A(t)=\wh a(t,\rho)$ and $V(t)=\wh v(t,\rho)$ thus leads to 
\begin{equation}\label{eq:bhf}
|(\wh a,\rho\wh a,\wh v)(t)|+\min(1,\rho^2)\!\int_0^t\! |(\wh a,\rho\wh a,\wh v)|\,d\tau\leq
C\biggl(|(\wh a_0,\rho\wh a_0,\wh v_0)|+\!\int_0^t \! |(\wh f,\rho\wh f,\wh h)|\,d\tau\biggr)\cdotp
\end{equation}
Note that as 
$$
\d_t\wh v+\rho^2\wh v= \wh h+\rho\wh a,
$$
we also have 
$$
\rho^2\int_0^t|\wh v(\tau)|\,d\tau\leq |\wh v_0|+\int_0^t|\wh h(\tau)|\,d\tau+\int_0^t|\rho \wh a(\tau)|\,d\tau.
$$
And thus, bounding the last term  according to \eqref{eq:bhf}, 
we get  the following  inequality which provides the full parabolic smoothing 
for $v$:
\begin{multline}\label{eq:bhfbis}
|(\wh a,\rho\wh a,\wh v)(t)|+\min(\rho,\rho^2)\int_0^t |\wh a|\,d\tau
+\rho^2\int_0^t|\wh v|\,d\tau\\\leq
C\biggl(|(\wh a_0,\rho\wh a_0,\wh v_0)|+\int_0^t  |(\wh f,\rho\wh f,\wh h)|\,d\tau\biggr)\cdotp
\end{multline}
{}From Inequality \eqref{eq:Lrho}  and Fourier-Plancherel theorem, 
it is easy to obtain estimates of $L^2$ type for the solutions to \eqref{eq:LPH}. 
Optimal informations will be obtained if splitting
the unknowns into  frequency packets of comparable sizes. 
To this end, one may apply $\ddk$ to \eqref{eq:LPH} and get 
\begin{equation}\label{eq:LPHk}
\left\{\begin{array}{l}\d_t\ddk a+\div\ddk \cQ u=\ddk f,\\
\d_t\ddk\cQ u-\Delta \ddk \cQ u+\nabla\ddk a=\ddk \cQ g,\\ 
\d_t\ddk \cP u-\mu\Delta\ddk \cP u=\ddk\cP g.
\end{array}\right.
\end{equation}
In the case with no source term then using \eqref{eq:Lrho} combined  with  Fourier-Plancherel 
theorem readily yields for some universal constant $C_0,$ and $c_0$ depending only on $\mu$,
$$
\|(\ddk a,\ddk\nabla a,\ddk u)(t)\|_{L^2}\leq C_0e^{-c_0\min(1,2^{2k})t}
\|(\ddk a,\ddk\nabla a,\ddk u)(0)\|_{L^2}.
$$
Then, for general source terms, using Duhamel's formula and repeating the computations leading
to \eqref{eq:bhfbis}, we end up with
\begin{multline}\label{eq:bhfk}
\|(\ddk a,\ddk\nabla a,\ddk u)(t)\|_{L^2}+\min(1,2^k)\int_0^t\|\ddk\nabla a \|_{L^2}\,d\tau
+2^{2k}\int_0^t\|\ddk u\|_{L^2}\,d\tau\\\leq C\biggl(\|(\ddk a_0,\ddk\nabla a_0,\ddk u_0)\|_{L^2}
+\int_0^t\|(\ddk f,\ddk\nabla f,\ddk g)\|_{L^2}\,d\tau\biggr)\cdotp
\end{multline}
Multiplying both sides by $2^{ks},$ taking the supremum on $[0,t]$ then 
summing up on $k\geq k_0$ or $k\leq k_0,$ we conclude to the following:
\begin{proposition} \label{p:lph} Let $s\in\R$ and $(a,u)$ satisfy \eqref{eq:LPH} with $P'(1)=\nu=1.$
Let  $k_0\in\Z.$
Then we have for some constant $C$ depending only on $k_0$ and $\mu,$ and all $t\geq0,$
$$\|(a,u)\|^{\ell}_{\wt L_t^\infty(\dot B^{s}_{2,1})}
+\|(a,u)\|^{\ell}_{L_t^1(\dot B^{s+2}_{2,1})}
\leq C\bigl(\|(a_0,u_0)\|^{\ell}_{\dot B^{s}_{2,1}}+\|(f,g)\|^{\ell}_{L_t^1(\dot B^{s}_{2,1})}\bigr),
$$
$$\displaylines{
\|a\|^{h}_{\wt L_t^\infty(\dot B^{s+1}_{2,1})}+\|a\|^{h}_{L_t^1(\dot B^{s+1}_{2,1})}+
\|u\|^{h}_{\wt L_t^\infty(\dot B^{s}_{2,1})}+\|u\|^{h}_{L_t^1(\dot B^{s+2}_{2,1})}
\hfill\cr\hfill\leq\bigl(
\|a_0\|^{h}_{\dot B^{s+1}_{2,1}}+\|u_0\|^{h}_{\dot B^{s}_{2,1}}+
\|f\|^{h}_{L_t^1(\dot B^{s+1}_{2,1})}+\|g\|^{h}_{L_t^1(\dot B^{s}_{2,1})}\bigr),}
$$
where we  used the notation
 \begin{equation}\label{eq:notation}\|z\|^{\ell}_{\dot B^\sigma_{p,1}}=
\sum_{k\leq k_0}2^{k\sigma}\|\ddk z\|_{L^p}\quad\hbox{and}\quad 
\|z\|^{h}_{\dot B^\sigma_{p,1}}=\sum_{k\geq k_0}2^{k\sigma}\|\ddk z\|_{L^p}.
\end{equation}
\end{proposition}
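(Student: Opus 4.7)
The proposition is essentially a dyadic-summation consequence of the block-by-block inequality \eqref{eq:bhfk} established just above the statement. That inequality itself combines three ingredients already in place: the Lyapunov functional estimate \eqref{eq:Lrho} applied to the $\ddk$-localized coupled system in $(a,\cQ u)$, Lemma \ref{l:heat} for the decoupled heat equation governing $\ddk\cP u$, and Duhamel's formula to put back the source terms $(f,g)$. Granting \eqref{eq:bhfk}, the plan is to multiply by $2^{ks}$, take $\sup_{[0,t]}$ (respectively $\int_0^t\,d\tau$) before summing over $k$ in the appropriate range, and exploit the fact that the two operative coefficients on the left are $2^{2k}\int\|\ddk u\|_{L^2}$ (full heat-type smoothing, always) and $\min(1,2^k)\int\|\ddk\nabla a\|_{L^2}=\min(2^k,2^{2k})\int\|\ddk a\|_{L^2}$ (heat-type at low frequencies, damping-type at high frequencies), exactly matching the spectral dichotomy of $M_\rho$.

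\textbf{Low-frequency piece.} For $k\leq k_0$ one observes that $\min(2^k,2^{2k})\geq c_{k_0}\,2^{2k}$ with a constant depending only on $k_0$ (the bound is trivial for $k\leq 0$ and costs a finite multiplicative loss for $0<k\leq k_0$). Multiplying \eqref{eq:bhfk} by $2^{ks}$ and summing over $k\leq k_0$ then converts both the $a$- and $u$-contributions on the left into the $\wt L_t^\infty(\dot B^{s}_{2,1})\cap L^1_t(\dot B^{s+2}_{2,1})$ norm of $(a,u)$ restricted to low frequencies, as required. On the right-hand side the terms $2^{ks}\|\ddk\nabla f\|_{L^2}=2^{k(s+1)}\|\ddk f\|_{L^2}$ are majorised by $C(k_0)\,2^{ks}\|\ddk f\|_{L^2}$ for $k\leq k_0$, so only the $\dot B^{s}_{2,1}$ norm of $(f,g)$ survives in the low-frequency bound.

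\textbf{High-frequency piece.} For $k\geq k_0$ the weighted sum $\sum_{k\geq k_0}2^{ks}(\cdot)$ applied to \eqref{eq:bhfk} produces $\|u\|^{h}_{\wt L_t^\infty(\dot B^{s}_{2,1})}+\|u\|^{h}_{L^1_t(\dot B^{s+2}_{2,1})}$ directly from the $\|\ddk u\|_{L^2}$ and $2^{2k}\int\|\ddk u\|_{L^2}$ contributions. For the density one uses the $\|\ddk\nabla a\|_{L^2}$ term, which carries an extra factor $2^k$: weighting by $2^{ks}$ yields $2^{k(s+1)}\|\ddk a\|_{L^2}$, i.e.\ the $\dot B^{s+1}_{2,1}$ regularity, both in time-$\sup$ and, after using $\min(1,2^k)\geq c_{k_0}$ for $k\geq k_0$, in $L^1_t$. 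This reflects the purely damping (not smoothing) character of the high-frequency mode for $a$. Symmetrically, on the right-hand side the $\ddk\nabla f$ term produces $\|f\|^{h}_{L^1_t(\dot B^{s+1}_{2,1})}$, while the bare $\ddk f$ term is absorbed into it via $2^{ks}\leq 2^{-k_0}\,2^{k(s+1)}$ for $k\geq k_0$.

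\textbf{Main difficulty.} There is no substantive obstacle beyond bookkeeping: once \eqref{eq:bhfk} is in hand, the proof reduces to choosing the right sub-range of frequencies and the right moment (before or after time integration) at which to apply the weighted $\ell^1$-summation that defines the $\wt L^{a}_t(\dot B^\sigma_{2,1})$ norms recalled in Remark \ref{r:heat}. The only place where one must be careful is in tracking the $k_0$-dependent constants arising from $\min(1,2^k)$ and from trading $\dot B^{s+1}$- against $\dot B^{s}$-norms of the forcing on each regime; all such losses are finite because $k_0$ is fixed.
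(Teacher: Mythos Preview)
Your proposal is correct and follows exactly the route taken in the paper: the proposition is obtained from the block estimate \eqref{eq:bhfk} by multiplying by $2^{ks}$, taking the supremum on $[0,t]$, and summing over $k\leq k_0$ (resp.\ $k\geq k_0$), using that $\min(1,2^k)\,2^k\geq c_{k_0}2^{2k}$ for $k\leq k_0$ and $\min(1,2^k)\geq c_{k_0}$ for $k\geq k_0$. Your bookkeeping of the $\nabla a$-, $\nabla f$-, and bare $a$-, $f$-terms (absorbing the lower-order ones into the higher via $2^{ks}\leq 2^{-k_0}2^{k(s+1)}$ for $k\geq k_0$ and $2^{k(s+1)}\leq 2^{k_0}2^{ks}$ for $k\leq k_0$) is precisely what is implicit in the paper's one-line summary preceding the statement.
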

The high frequencies  inequality means that  in order to get
optimal estimates, it is suitable  to work with the same 
regularity for $\nabla a$ and $u.$ In contrast, for low frequencies, one has to work in the same space for $a$ and $u,$ a fact which does not follow from  our rough  scaling considerations \eqref{scaling}  but is  fundamental to keep the pressure term under control in \eqref{eq:nsc}.
\medbreak
Granted with the above proposition, it is now natural to look at \eqref{eq:nsc}
as System \eqref{eq:LPH} with right-hand side
$$
f=-\div(au)\quad\hbox{and}\quad
g=-u\cdot\nabla u-I(a)\cA u-k(a)\nabla a
\quad\hbox{where }\ k(a)\eqdefa G'(a)-G'(0).
$$
The problem is that $f$ will cause 
a loss of one derivative as there is no smoothing effect for $a$ in high frequency. 
A second limitation of Proposition \ref{p:lph} is that
it concerns Besov spaces related to  $L^2$  whereas we know the system to be locally well-posed
in more general Besov spaces (see Theorem \ref{th:small1}). 
To overcome the first problem, let us include the convection terms in our linear analysis, 
thus considering:
\begin{equation}\label{eq:LPHbis}
\left\{\begin{array}{l}\d_ta+v\cdot\nabla a+\div u=f,\\
\d_tu+v\cdot\nabla u-\mu\Delta u-(\lambda+\mu)\nabla\div u+\alpha\nabla a=g,
\end{array}\right.
\end{equation}
where $v$ stands for a given time-dependent vector field. 
\begin{proposition} \label{p:lphbis} Let $-d/2<s\leq d/2$ and $(a,u)$ satisfy \eqref{eq:LPHbis} 
with $\alpha=\nu=1.$
Let  $k_0\in\Z.$
Then we have for some constant $C$ depending only on $k_0$ and $\mu,$ and all $t\geq0,$
$$\displaylines{\|(a,\nabla a,u)\|_{\wt L_t^\infty(\dot B^{s}_{2,1})}+\|a\|^{\ell}_{L_t^1(\dot B^{s+2}_{2,1})}
+\|\nabla a\|^{h}_{L_t^1(\dot B^{s}_{2,1})}
+\|u\|_{L_t^1(\dot B^{s+2}_{2,1})}\hfill\cr\hfill\leq C\biggl(\|(a_0,\nabla a_0,u_0)\|_{\dot B^{s}_{2,1}}
+\|(f,\nabla f,g)\|_{L_t^1(\dot B^{s}_{2,1})}+\int_0^t\|\nabla v\|_{\dot B^{\frac d2}_{2,1}}\|(a,\nabla a,u)\|_{\dot B^{s}_{2,1}}\,d\tau
\biggr)\cdotp}$$
\end{proposition}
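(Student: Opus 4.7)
The plan is to mimic the analysis of Proposition \ref{p:lph} at the level of each dyadic block, the novelty being the two convection terms, which we treat by the commutator technique from Theorem \ref{th:transport}. Applying $\ddk$ to \eqref{eq:LPHbis}, and using $\alpha=\nu=1$ for simplicity, the localized system reads
\begin{align*}
\d_t\ddk a+v\cdot\nabla\ddk a+\div\ddk u&=\ddk f+\dot R_k^1,\\
\d_t\ddk u+v\cdot\nabla\ddk u-\mu\Delta\ddk u-(\lambda+\mu)\nabla\div\ddk u+\nabla\ddk a&=\ddk g+\dot R_k^2,
\end{align*}
with $\dot R_k^1\eqdefa[v\cdot\nabla,\ddk]a$ and $\dot R_k^2\eqdefa[v\cdot\nabla,\ddk]u$. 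The exact commutator bound worked out in the proof of Theorem \ref{th:transport} (with $p=p_1=2$, hence the sharp range $-d/2<s\leq d/2$) gives
$$
\|(\dot R_k^1,\dot R_k^2)(t)\|_{L^2}\leq C\,c_k(t)\,2^{-ks}\,\|\nabla v(t)\|_{\dot B^{d/2}_{2,1}}\,\|(a,u)(t)\|_{\dot B^{s}_{2,1}},\qquad \sum_k c_k(t)=1.
$$

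Next I would reproduce the Lyapunov argument of \eqref{eq:Lrho0} \emph{in physical space} on each dyadic block, taking $L^2$ inner products of the two localized equations with $\ddk a$, $\ddk u$, and $\nabla\ddk a$ respectively, then forming the functional
$$
\cL_k^2\eqdefa 2\|\ddk a\|_{L^2}^2+2\|\ddk u\|_{L^2}^2+\|\ddk\nabla a\|_{L^2}^2-2\int_{\R^d}\ddk\nabla a\cdot\ddk u\,dx.
$$
This is equivalent to $\|(\ddk a,\ddk\nabla a,\ddk u)\|_{L^2}^2$, and the algebra is identical to that leading to \eqref{eq:Lrho0} except for three kinds of new contributions: the transport terms yield an error $\|\div v\|_{L^\infty}\cL_k^2$ after integration by parts; the source terms contribute as in Proposition \ref{p:lph}; the commutators contribute the bound above. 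Dividing by $\cL_k$ (to convert the $L^2$-energy identity into an $L^\infty$-in-time estimate without Gronwall iteration on the convection), integrating and using the embedding $\dot B^{d/2}_{2,1}\hookrightarrow L^\infty$ to absorb $\|\div v\|_{L^\infty}$ into $\|\nabla v\|_{\dot B^{d/2}_{2,1}}$, I obtain
$$
\|(\ddk a,\ddk\nabla a,\ddk u)\|_{L_t^\infty(L^2)}+c_0\min(1,2^{2k})\!\int_0^t\!\|(\ddk a,\ddk\nabla a,\ddk u)\|_{L^2}\,d\tau\leq \text{(data+source+conv.)}.
$$
To recover full parabolic smoothing of $u$ at all frequencies I would feed the previous $L^1$-in-time bound on $\ddk\nabla a$ into the second equation viewed as a heat equation for $\ddk u$, exactly as in the passage from \eqref{eq:bhf} to \eqref{eq:bhfbis}.

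Finally, I would multiply by $2^{ks}$ and sum over $k\in\Z$, splitting the $a$-estimate via the threshold $k_0$: the factor $\min(1,2^{2k})$ reduces to $2^{2k}$ for $k\leq k_0$ (producing $\|a\|^{\ell}_{L^1_t(\dot B^{s+2}_{2,1})}$, i.e.\ parabolic smoothing of $a$ at low frequencies) and is bounded below for $k\geq k_0$ (producing only $\|\nabla a\|^{h}_{L^1_t(\dot B^{s}_{2,1})}$, i.e.\ pure damping of $\nabla a$ at high frequencies). The commutator contributions sum via $\sum c_k=1$ into the final term $\int_0^t\|\nabla v\|_{\dot B^{d/2}_{2,1}}\|(a,\nabla a,u)\|_{\dot B^{s}_{2,1}}d\tau$.

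The main obstacle is the bookkeeping in the Lyapunov step: one must arrange the three inner products so that the dangerous term $-\int \ddk\nabla a\cdot\ddk u$ in $\cL_k^2$ produces \emph{both} the $L^1_t$ damping of $\nabla a$ at high frequencies and the $L^1_t$ smoothing at low frequencies, while the transport perturbations remain under the $\|\nabla v\|_{\dot B^{d/2}_{2,1}}$ umbrella and no Gronwall factor is generated (so that the last term in the statement appears linearly, ready for a bootstrap). This is exactly what the equivalence $\cL_k\sim \|(\ddk a,\ddk\nabla a,\ddk u)\|_{L^2}$ together with the commutator summation rule ensures, in the range $-d/2<s\leq d/2$.
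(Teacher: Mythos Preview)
Your overall strategy matches the paper's: localize, build the Lyapunov functional $\cL_k$, extract dissipation, then recover the full parabolic smoothing of $u$ by treating $\nabla a$ as a source in the Lam\'e equation. There are, however, two concrete problems.

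First, the sign of the cross term in your Lyapunov functional is wrong. With the equations $\d_t a_k+\div u_k=\cdots$ and $\d_t u_k-\cA u_k+\nabla a_k=\cdots$, the contribution of the cross term to $\frac{d}{dt}(u_k\mid\nabla a_k)$ coming from the pressure is $(-\nabla a_k\mid\nabla a_k)=-\|\nabla a_k\|_{L^2}^2$; hence only $+(u_k\mid\nabla a_k)$ (or a positive multiple thereof) produces the required \emph{dissipation} $+\|\nabla a_k\|_{L^2}^2$ on the left-hand side. This is consistent with \eqref{eq:Lrho0}, since $-2\rho\,\Re(A\bar V)$ corresponds in physical space to $-2(a\mid\div u)=+2(\nabla a\mid u)$. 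With your choice $-2\int\nabla a_k\cdot u_k$, the same computation yields $+\|\nabla a_k\|_{L^2}^2$ on the \emph{right-hand} side, and the damping of $a$ at high frequencies is lost. The fix is simply to flip the sign.

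Second, the commutator bound you quote, $\|(\dot R_k^1,\dot R_k^2)\|_{L^2}\lesssim c_k2^{-ks}\|\nabla v\|_{\dot B^{d/2}_{2,1}}\|(a,u)\|_{\dot B^s_{2,1}}$, does not by itself cover the piece of the energy coming from $\|\nabla a_k\|_{L^2}^2$ and from the cross term: there you must control $(\nabla\dot R_k^1\mid\nabla a_k)$ and $(\nabla\dot R_k^1\mid u_k)$, i.e.\ essentially $\|\nabla\dot R_k^1\|_{L^2}$, and $\dot R_k^1$ is \emph{not} spectrally localized near $2^k$. The paper handles this by introducing the second commutator $\wt R_k^i(v,a)\eqdefa[\d_i\ddk,\,v\cdot\nabla]a$ and proving directly that $\|\wt R_k(v,a)\|_{L^2}\lesssim c_k2^{-ks}\|\nabla v\|_{\dot B^{d/2}_{2,1}}\|\nabla a\|_{\dot B^s_{2,1}}$ (same proof as \eqref{eq:transport2}, with $\d_i\ddk$ replacing $\ddk$). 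Equivalently, in your setup one has $\d_i\dot R_k^1=-\wt R_k^i+(\d_i v)\cdot\nabla a_k$, and both pieces are then controlled by $\|\nabla v\|_{\dot B^{d/2}_{2,1}}\|\nabla a\|_{\dot B^s_{2,1}}$. This is the missing ingredient that makes the final convection term carry $\|(a,\nabla a,u)\|_{\dot B^s_{2,1}}$ rather than just $\|(a,u)\|_{\dot B^s_{2,1}}$. Once these two points are fixed, your outline is exactly the paper's proof.
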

\begin{proof}
Applying $\ddk$ to \eqref{eq:LPHbis} yields:
$$
\left\{\begin{array}{l}\d_ta_k+\ddk(v\cdot\nabla a)+\div u_k=f_k,\\
\d_tu_k+\ddk(v\cdot\nabla u)-\mu\Delta u_k-(\lambda+\mu)\nabla\div u_k+\nabla a_k=g_k,
\end{array}\right.
$$
with $a_k\eqdefa\ddk a,$ $u_k\eqdefa\ddk u,$ $f_k\eqdefa\ddk f$  and $g_k\eqdefa\ddk g.$
\medbreak
Keeping in mind the proof of Proposition \ref{p:lph}, we introduce
$$
\cL_k^2\eqdefa2\|(a_k,u_k)\|_{L^2}^2+\|\nabla a_k\|_{L^2}^2+ (u_k\mid\nabla a_k)_{L^2}.
$$
Now, remembering  that $\lambda+2\mu=1,$ we get
\begin{multline}\label{eq:Lk0}\frac12\frac d{dt}\cL_k^2+\mu\|\nabla\cP u_k\|_{L^2}^2+\|(\nabla\cQ u_k,\nabla a_k)\|_{L^2}^2=\bigl(g_k\mid (2u_k\!+\!\nabla a_k)\bigr)_{L^2}+2(f_k\mid a_k)_{L^2}\\
+\bigl(\nabla f_k\mid \nabla a_k\bigr)_{L^2}-2\bigl(\ddk(v\cdot\nabla a) \mid a_k\bigr)_{L^2}
-2\bigl(\ddk(v\cdot\nabla u)|u_k\bigr)_{L^2}\\\bigl(\ddk\nabla(v\cdot\nabla a)\mid\nabla a_k\bigr)_{L^2}
-\bigl(\ddk(v\cdot\nabla u)\mid\nabla a_k\bigr)_{L^2}-\bigl(\ddk\nabla(v\cdot\nabla a)\mid u_k\bigr)_{L^2}.
\end{multline}
Let us  explain how to bound  the convection terms. To handle 
the second and third terms of the second line, we proceed as explained below, taking $b\in\{a,u^1,\cdots,u^d\}$. 
\smallbreak
Integrating by parts and setting   $$R_k(v,b)\eqdefa\ddk(v\cdot\nabla b)-v\cdot\nabla \ddk b,$$  we discover  that 
\begin{align*}
\bigl(\ddk(v\cdot\nabla b)\mid b_k\bigr)_{L^2}&=\int (v\cdot\nabla b_k)\cdot b_k\,dx
+\int R_k(v,b)\,b_k\,dx\\&\leq -\frac12\int |b_k|^2\div v\,dx +\|R_k(v,b)\|_{L^2}\|b_k\|_{L^2}.
\end{align*}
Bounding the last term according to \eqref{eq:transport2},  we thus get 
$$
\bigl|\bigl(\ddk(v\cdot\nabla b)\mid b_k\bigr)_{L^2}\bigr|\leq Cc_k2^{-ks}\|\nabla v\|_{\dot B^{\frac d2}_{2,1}}
\|b\|_{\dot B^s_{2,1}}\|b_k\|_{L^2}
$$
with $(c_k)_{k\in\Z}$ in the unit sphere  of $\ell^1(\Z).$
\medbreak\noindent
Next, we  use the fact that for $i\in\{1,\cdots,d\},$ 
$$
\d_i\ddk(v\cdot\nabla a)=v\cdot\nabla \d_i a_k + \wt R_k^i(v,a)
\quad\hbox{with }\  \wt R_k^i(v,a)\eqdefa [\d_i\ddk,v]\cdot\nabla a.
$$
By adapting the proof of \eqref{eq:transport2}, it is easy to prove that 
$$
\| \wt R_k(v,a)\|_{L^2} \leq Cc_k2^{-ks}\|\nabla v\|_{\dot B^{\frac d2}_{2,1}}
\|\nabla a\|_{\dot B^s_{2,1}}.
$$
Then using an integration by parts, exactly as above, we conclude that 
$$
\Bigl|\bigl(\ddk\nabla(v\cdot\nabla a)|\nabla a_k\bigr)_{L^2}\Bigr|\leq Cc_k2^{-ks}\|\nabla v\|_{\dot B^{\frac d2}_{2,1}}
\|\nabla a\|_{\dot B^s_{2,1}}\|\nabla a_k\|_{L^2}.
$$
Finally, to handle the last two convection terms, we use the fact that 
$$
\displaylines{
\bigl(\ddk(v\cdot\nabla u)\mid\nabla a_k\bigr)_{L^2}+\bigl(\ddk\nabla(v\cdot\nabla a)\mid u_k\bigr)_{L^2}
\hfill\cr\hfill=\bigl(v\cdot\nabla u_k|\nabla a_k\bigr)_{L^2}+\bigl((v\cdot\nabla)\nabla a_k)|u_k\bigr)_{L^2}
+\bigl(R_k(v,u)\mid\nabla a_k\bigr)_{L^2} +\bigl(\wt R_k(v,a)\mid u_k\bigr)_{L^2}.}
$$
Integrating by parts  in the first two terms of the second line and using again 
\eqref{eq:transport2} to bound the last two terms eventually leads to 
$$\displaylines{
\Bigl|\bigl(\ddk(v\cdot\nabla u)\mid\nabla a_k\bigr)_{L^2}+\bigl(\ddk\nabla(v\cdot\nabla a)\mid u_k\bigr)_{L^2}\Bigr|
\hfill\cr\hfill\leq C c_k2^{-ks}\|\nabla v\|_{\dot B^{\frac d2}_{2,1}}\bigl(\|\nabla a\|_{\dot B^s_{2,1}}\|u_k\|_{L^2}
+\|u\|_{\dot B^s_{2,1}}\|\nabla a_k\|_{L^2}\bigr).}
$$
Because $\cL_k\approx\|(a_k,\nabla a_k,u_k)\|_{L^2},$  we thus conclude that  
\begin{multline}\label{eq:Lk1}
\frac12\frac d{dt}\cL_k^2+\mu\|\nabla\cP u_k\|_{L^2}^2+\|(\nabla\cQ u_k,\nabla a_k)\|_{L^2}^2
\\\leq  \bigl(\|(f_k,\nabla f_k,g_k)\|_{L^2}+Cc_k2^{-ks}\|\nabla v\|_{\dot B^{\frac d2}_{2,1}}\|(a,\nabla a,u)\|_{\dot B^s_{2,1}}\bigr)\cL_k,
\end{multline}
which after time integration and multiplication by $2^{ks}$ yields 
$$\displaylines{
2^{ks}\cL_k(t)+c_02^{ks}\min(1,2^{2k})\int_0^t\|(a_k,\nabla a_k,u_k)\|_{L^2}\,d\tau
\leq 2^{ks}\cL_k(0)+\int_0^t2^{ks}\|g_k\|_{L^2}\,d\tau\hfill\cr\hfill+\int_0^tc_k\|\nabla v\|_{\dot B^{\frac d2}_{2,1}}\|(a,\nabla a,u)\|_{\dot B^s_{2,1}}\,d\tau.}
$$
Taking the supremum on $[0,t]$ then summing up over $k,$ we thus get
\begin{multline}\label{eq:Lk2}
\|(a,\nabla a,u)\|_{\wt L^\infty_t(\dot B^s_{2,1})}+\int_0^t\|(a,u)\|^\ell_{\dot B^{s+2}_{2,1}}\,d\tau
+\int_0^t\|(\nabla a,u)\|^h_{\dot B^{s}_{2,1}}\,d\tau\\\lesssim \|(a,\nabla a,u)(0)\|_{\dot B^s_{2,1}}
+\int_0^t\|(f,\nabla f,g)\|_{\dot B^{s}_{2,1}}\,d\tau
+\int_0^t\|\nabla v\|_{\dot B^{\frac d2}_{2,1}}\|(a,\nabla a,u)\|_{\dot B^s_{2,1}}\,d\tau.
\end{multline}
Finally, using the fact that 
$$
\d_tu+v\cdot\nabla u-\mu\Delta u-(\lambda+\mu)\nabla\div u=g-\nabla a,
$$
localizing according to $\ddk,$ and arguing  as above, we find out that
$$
\|u\|_{\wt L_t^\infty(\dot B^s_{2,1})}+\int_0^t\|u\|_{\dot B^{s+2}_{2,1}}\lesssim \|u(0)\|_{\dot B^s_{2,1}}
+\int_0^t\|g-\nabla a\|_{\dot B^{s}_{2,1}}\,d\tau
+\int_0^t\|\nabla v\|_{\dot B^{\frac d2}_{2,1}}\|u\|_{\dot B^s_{2,1}}\,d\tau.
$$
Then bounding $\nabla a$ according to \eqref{eq:Lk2} completes the proof of the proposition.\qed
\end{proof}
It turns out to be possible to extend the above proposition  to more general Besov spaces
related to the $L^p$ spaces with $p\not=2.$
The proof relies  on a  paralinearized version 
of System \eqref{eq:LPHbis} combined with a Lagrangian change of variables  (see  \cite{CD,CMZ2}). 
Here, in order to solve \eqref{eq:nsc} globally,  we shall follow  a more elementary approach based on the
paper by B. Haspot \cite{H2} : we use Proposition \ref{p:lph} only for bounding
low frequencies,  and perform a suitable quasi-diagonalization of the system to handle high frequencies. 
This eventually leads to the following statement\footnote{The reader may refer
to \cite{DH} for a slightly more general result.} 
 that will be proved in the rest of this section:
\begin{theorem}\label{th:main} Let $d\geq2.$
Let $p\in [2,\min(4,2d/(d-2)]$ with, additionally, $p\not=4$ if $d=2.$ 
Assume with no loss of generality 
that $P'(1)=1$ and $\nu=1.$  There exists a universal integer $k_0\in\N$  and a small  constant $c=c(p,d,\mu,G)$ such that
if    $a_0\in \dot B^{\frac d{p}}_{p,1}$
and $u_0\in \dot B^{\frac d{p}-1}_{p,1}$ with   
    besides  $(a_0^\ell,u_0^\ell)$  in $\dot B^{\frac d2-1}_{2,1}$
    (with the notation $z^\ell=\dot S_{k_0+1}z$ and $z^h=z-z^\ell$) satisfy 
    \begin{equation}\label{eq:globalsmall}
X_{p,0}\eqdefa \|(a_0,u_0)\|^\ell_{\dot B^{\frac d2-1}_{2,1}}+\|a_0\|^h_{\dot B^{\frac dp}_{p,1}}
+\|u_0\|^h_{\dot B^{\frac d{p}-1}_{p,1}}\leq c\end{equation}
then \eqref{eq:nsc}  has a unique global-in-time  solution $(a,u)$  in the space $X_p$ defined by 
$$\displaylines{
(a,u)^\ell\in \wt\cC_b(\R_+;\dot B^{\frac d2-1}_{2,1})\cap  L^1(\R_+;\dot B^{\frac d2+1}_{2,1}),\quad
a^h\in \wt\cC_b(\R_+;\dot B^{\frac dp}_{p,1})\cap L^1(\R_+;\dot B^{\frac dp}_{p,1}),
\cr u^h\in  \wt\cC_b(\R_+;\dot B^{\frac dp-1}_{p,1})
\cap L^1(\R_+;\dot B^{\frac dp+1}_{p,1})}
$$
where we agree that $\, \wt\cC_b(\R_+;\dot B^{s}_{q,1})\eqdefa \cC(\R_+;\dot B^{s}_{q,1})\cap
 \wt L^\infty(\R_+;\dot B^{s}_{q,1}),$ $\, s\in\R,$ $\,1\leq q\leq\infty.$
 \medbreak
Furthermore, we have for some constant $C=C(p,d,\mu,G),$ 
\begin{equation}\label{eq:globalbound}
\|(a,u)\|_{X_p}\leq C X_{p,0}.
\end{equation}
\end{theorem}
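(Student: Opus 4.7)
I would use a continuation/bootstrap argument based on the local well-posedness of Theorem \ref{th:small1} (or, equivalently, a Friedrichs approximation scheme). The goal is to show that, as long as the hybrid norm
$$X_p(T)\eqdefa\|(a,u)\|^\ell_{\wt L^\infty_T(\dot B^{\frac d2-1}_{2,1})\cap L^1_T(\dot B^{\frac d2+1}_{2,1})}+\|a\|^h_{\wt L^\infty_T(\dot B^{\frac dp}_{p,1})\cap L^1_T(\dot B^{\frac dp}_{p,1})}+\|u\|^h_{\wt L^\infty_T(\dot B^{\frac dp-1}_{p,1})\cap L^1_T(\dot B^{\frac dp+1}_{p,1})}$$
stays below a small universal threshold, one can close the estimate $X_p(T)\leq C X_{p,0}+C X_p(T)^2$, whence \eqref{eq:globalbound} follows by a standard continuity argument. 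Note that the smallness of $X_{p,0}$ ensures, through the embedding $\dot B^{\frac dp}_{p,1}\cap\dot B^{\frac d2-1}_{2,1}\hookrightarrow L^\infty$, that $1+a$ stays bounded away from $0$ uniformly, so that the nonlinear estimates of Section \ref{s:local} are justified.

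\textbf{Low-frequency block.} Writing \eqref{eq:nsc} as the linearized system \eqref{eq:LPHbis} with transport field $v=u$ and source terms $f=-a\div u$ and $g=-I(a)\cA u-k(a)\nabla a$, I would apply Proposition \ref{p:lphbis} with $s=d/2-1$ to the low-frequency part. The symmetric nature of the nonlinearity (roughly, quadratic in $(a,u)$ and their derivatives) combined with product laws from Proposition \ref{p:op} and Corollary \ref{c:op}, together with the composition result for $I$ and $G$ (Proposition \ref{p:comp}), gives
$$\|(f,\nabla f,g)\|^\ell_{L^1_T(\dot B^{\frac d2-1}_{2,1})}\lesssim X_p(T)^2,$$
where the restriction $p\leq 2d/(d-2)$ is precisely what is needed to absorb the $L^p$-based high-frequency components inside the $L^2$-based low-frequency bound via Bernstein/embedding inequalities (the pair $\dot B^{\frac dp-1}_{p,1}\cdot \dot B^{\frac dp}_{p,1}\hookrightarrow \dot B^{\frac d2-1}_{2,1}$ at low frequencies requires this constraint).

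\textbf{High-frequency block -- the main obstacle.} The heart of the proof is the high-frequency estimate for $p\neq 2$, since the energy method of Proposition \ref{p:lphbis} is genuinely $L^2$-based. Here I would follow Haspot's quasi-diagonalization: introduce the effective velocity $w\eqdefa u+\nabla(-\Delta)^{-1}a$. A short computation using \eqref{eq:nsc} shows that at high frequencies $w$ satisfies a pure Lam\'e-type parabolic equation up to lower-order terms,
$$\d_t w-\cA w=\mathrm{(nonlinear\ terms)}+\mathrm{(lower\ order\ in\ }w,a),$$
while $a$ satisfies an effective damped transport equation of the form
$$\d_t a+u\cdot\nabla a+a=\div w+\mathrm{(nonlinear\ terms)}.$$
On $w$ I apply Theorem \ref{th:heat}  (or rather its generalization \eqref{eq:maxreglame} for the Lam\'e operator), which gives the parabolic smoothing $\|w\|^h_{L^\infty_T(\dot B^{\frac dp-1}_{p,1})\cap L^1_T(\dot B^{\frac dp+1}_{p,1})}$. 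On $a$ I apply Theorem \ref{th:transport} with damping $\lambda=1$, yielding $\|a\|^h_{L^\infty_T(\dot B^{\frac dp}_{p,1})\cap L^1_T(\dot B^{\frac dp}_{p,1})}$. Reconstructing $u=w-\nabla(-\Delta)^{-1}a$ and noting that $\nabla(-\Delta)^{-1}$ is a $-1$-order multiplier recovers the desired bound on $u$ in high frequency. The nonlinear right-hand sides are controlled in the corresponding spaces by product/composition laws, and the high-frequency cut-off $k\geq k_0$ with $k_0$ large enough is crucial to absorb the lower-order terms produced by the change of unknown $u\leftrightarrow w$.

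\textbf{Conclusion and uniqueness.} Combining the two blocks gives $X_p(T)\leq C X_{p,0}+C X_p(T)^2$, hence global existence by continuation from the local solution of Theorem \ref{th:small1}. Uniqueness at this level of regularity would be obtained by subtracting two solutions and running the same scheme (possibly at one derivative less, as in Step 5 of Subsection 3.1, with a logarithmic interpolation argument in the borderline case), taking advantage of the fact that the $L^1_T(\mathrm{Lip})$ control on $u$ provided by $X_p(T)<\infty$ suffices for transport estimates on $a$.
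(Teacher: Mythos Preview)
Your proposal is correct and follows essentially the same route as the paper: a priori estimates closing $X_p(T)\le C X_{p,0}+CX_p(T)^2$, low frequencies handled by the linearized analysis of \eqref{eq:LPH}/\eqref{eq:LPHbis}, high frequencies by Haspot's effective-velocity diagonalization, then continuation from the local solution of Theorem \ref{th:small1}. Two cosmetic differences are worth noting. First, for the low-frequency block the paper applies Proposition \ref{p:lph} (no convection) with $f=-\div(au)$, so only $\|(f,g)\|^\ell_{\dot B^{d/2-1}_{2,1}}$ has to be bounded, whereas you invoke Proposition \ref{p:lphbis} and thus must also control $\|\nabla f\|^\ell_{\dot B^{d/2-1}_{2,1}}$ and the commutator term $\int\|\nabla u\|_{\dot B^{d/2}_{2,1}}\|(a,\nabla a,u)\|_{\dot B^{d/2-1}_{2,1}}$; both choices work. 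Second, the paper separates $\cP u$ (heat equation with coefficient $\mu$) from the effective velocity $w=\nabla(-\Delta)^{-1}(a-\div u)=\cQ u+\nabla(-\Delta)^{-1}a$ (heat equation with coefficient $\nu=1$), while your $w=u+\nabla(-\Delta)^{-1}a$ bundles $\cP u$ in and satisfies a Lam\'e equation instead; again this is immaterial since \eqref{eq:maxreglame} covers the Lam\'e case. The delicate nonlinear step---bounding $\|(f,g)\|^\ell_{L^1_t(\dot B^{d/2-1}_{2,1})}$ by $X_p(t)^2$ when the factors live in $L^p$-based spaces with $p>2$---is exactly where the constraint $p\le\min(4,2d/(d-2))$ enters, via paraproduct bounds of the type \eqref{eq:paracont}--\eqref{eq:restecont}; you identified this correctly.
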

 \begin{remark} Condition  \eqref{eq:globalsmall} is satisfied for small $a_0$ and  \emph{large} highly oscillating  velocities:
take  $u_0^\ep:x\mapsto \phi(x) \sin (\ep^{-1} x\cdot\omega)\, n$ with  $\omega$ and
$n$  in ${\mathbb S}^{d-1}$ and $\phi\in\cS(\R^d).$ Then 
$$
\|u_0^\ep\|_{\dot B^{\frac d{p}-1}_{p,1}}\leq C\ep^{1-\frac d{p}}\quad\hbox{if }\ p>d,
$$
and $\|u_0^\ep\|_{\dot B^{\frac d{2}-1}_{2,1}}^\ell$ has fast decay with respect to $\ep.$
Hence such data with small enough~$\ep$  generate global unique solutions
in dimension $d=2,3$.
\end{remark}
\begin{remark}
One may  extend the above global result to  $2d/(d+2)\leq p<2$   provided the following smallness condition is fulfilled: 
$$\|a_0\|_{\dot B^{\frac d2-1}_{2,1}\cap\dot B^{\frac d2}_{2,1}}+\|u_0\|_{\dot B^{\frac d2-1}_{2,1}}\leq \eta.$$
Indeed,  Theorem \ref{th:main1} provides a global small solution in $X_{2}.$ Therefore it is only a matter
of checking that the constructed solution has  additional regularity $X_p.$
This may be achieved by following Steps 3 and 4 of the proof below, knowing already that 
the solution is in $X_{2}.$ The condition that $2d/(d+2)\leq p$ comes from 
the part $u^{\ell}\cdot\nabla a$ of the convection term in the mass equation, as $\nabla u^{\ell}$
is only in $L^1(\R_+;\dot B^{\frac d2}_{2,1}),$ and the regularity to be transported is $\dot B^{\frac dp}_{p,1}.$ 
Hence we need to have $d/p\leq d/2+1$ (see Theorem \ref{th:transport}). 
The same condition appears when handling $k(a)\nabla a$. 
\end{remark}
\begin{remark} Using space $\wt C_b(\R_+;\dot B^s_{2,1})$ rather than just
$C_b(\R_+;\dot B^s_{2,1})$ is not essential in the proof of Theorem \ref{th:main}.
We chose to present that slightly more accurate result, as it will be needed
when investigating time decay estimates, at the end of the survey.
\end{remark}


\subsection{Global a priori estimates}

Consider a smooth solution $(a,u)$ to \eqref{eq:nsc}
satisfying, say,  
\begin{equation}\label{eq:pasdevide}
\|a\|_{L^\infty(\R_+\times\R^d)}\leq 1/2.
\end{equation}
We want to find conditions under which the following quantity: 
$$\displaylines{\quad
X_p(t)\eqdefa\|(a,u)\|^\ell_{\wt L_t^\infty(\dot B^{\frac d2-1}_{2,1})}+\|(a,u)\|^\ell_{L_t^1(\dot B^{\frac d2+1}_{2,1})}
\hfill\cr\hfill+\|a\|^h_{\wt L_t^\infty(\dot B^{\frac dp}_{p,1})}+\|a\|^h_{L_t^1(\dot B^{\frac dp}_{p,1})}
+\|u\|^h_{\wt L_t^\infty(\dot B^{\frac dp-1}_{p,1})}+\|u\|^h_{L_t^1(\dot B^{\frac dp+1}_{p,1})}\quad}
$$
satisfies \eqref{eq:globalbound} for all $t\in\R_+.$
\medbreak
Rewriting System \eqref{eq:nsc} as follows:
 $$
\left\{\begin{array}{l}
\d_ta+\div u= f\eqdefa -\div(au),\\
\d_tu-\cA u+\nabla a =g\eqdefa -u\cdot\nabla u-I(a)\cA u-k(a)\nabla a,
\end{array}\right.
$$
we shall   take advantage of Proposition \ref{p:lph} with $s'=d/2-1$
 to  bound the low frequency part  of $(a,u).$ 
 To handle high frequencies, following \cite{H2}, we shall  use the facts that,  up to low order terms:
\begin{itemize}
\item $\cP u$ satisfies a heat equation (hence parabolic smoothing in any Besov space);
\item The \emph{effective velocity} 
\begin{equation}\label{eq:modified}
w\eqdefa\nabla(-\Delta)^{-1}(a-\div u)\end{equation}
satisfies a heat equation;
\item The high frequencies of  $a$ have exponential decay.
 \end{itemize}
 
\subsubsection*{First step:  Low frequencies}

{}From  Proposition \ref{p:lph},  we readily infer that
\begin{equation}\label{eq:LF}
\|(a,u)\|^\ell_{\wt L_t^\infty(\dot B^{\frac d2-1}_{2,1})}
+\|(a,u)\|^\ell_{L_t^1(\dot B^{\frac d2+1}_{2,1})}\lesssim
\|(a_0,u_0)\|^\ell_{\dot B^{\frac d2-1}_{2,1}}+\|(f,g)\|^\ell_{L_t^1(\dot B^{\frac d2-1}_{2,1})}.
\end{equation}

\subsubsection*{Second step: high frequencies, the incompressible part of the velocity}

To handle $\cP u,$ we just use the fact that 
$$ 
\d_t\cP u -\mu\Delta\cP u=\cP g.
$$
Hence, according to Remark \ref{r:heat} (restricted to high frequencies)
  \begin{equation}\label{eq:HFPu}
\|\cP u\|^h_{\wt L^\infty_t(\dot B^{\frac dp-1}_{p,1})}+\mu \|\cP u\|^h_{L^1_t(\dot B^{\frac dp+1}_{p,1})}
 \leq C \bigl(\|\cP u_0\|_{\dot B^{\frac dp-1}_{p,1}}^h+\|\cP g\|^h_{L^1_t(\dot B^{\frac dp-1}_{p,1})}\bigr).
\end{equation}

\subsubsection*{Third step: high frequencies, the effective  velocity and the density}

On the one hand, the   effective velocity $w$ defined in \eqref{eq:modified} fulfills 
  $$
\d_tw-\Delta w=\nabla(-\Delta)^{-1}(f-\div g) +w-(-\Delta)^{-1}\nabla a.
 $$
 Therefore, Theorem \ref{th:heat} and the fact that $\nabla(-\Delta)^{-1}$ is an homogeneous
 Fourier multiplier of degree $-1$   imply  that 
 \begin{multline}\label{eq:w}
 \|w\|_{\wt E_p(t)}^h\eqdefa\|w\|_{\wt L^\infty_t(\dot B^{\frac dp-1}_{p,1})}^h+\|w\|_{L^1_t(\dot B^{\frac dp+1}_{p,1})}^h\leq C\bigl(\|w_0\|_{\dot B^{\frac dp-1}_{p,1}}^h \\+
 \|f-\div g\|_{L^1_t(\dot B^{\frac dp-2}_{p,1})}^h
 +\|w-(-\Delta)^{-1}\nabla a\|_{L^1_t(\dot B^{\frac dp-1}_{p,1})}^h\bigr).
 \end{multline}
 On the other hand, we have 
 \begin{equation}\label{eq:eqa}
 \d_ta+\div(au)+a=-\div w.
 \end{equation}
 We claim that 
 \begin{equation}\label{eq:a}
 \|a\|_{\wt L^\infty_t(\dot B^{\frac dp}_{p,1})}^h+\|a\|_{L^1_t(\dot B^{\frac dp}_{p,1})}^h
 \leq C\biggl(\|a_0\|^h_{\dot B^{\frac dp}_{p,1}}+\|\div w\|_{L_t^1(\dot B^{\frac dp}_{p,1})}^h
 +\int_0^t\|\nabla u\|_{\dot B^{\frac dp}_{p,1}}\|a\|_{\dot B^{\frac dp}_{p,1}}\,d\tau\biggr)\cdotp
 \end{equation}
 Indeed,  as in the proof of  Theorem \ref{th:transport},  let us apply $\ddk$ to \eqref{eq:eqa}. We get
 $$
 \d_t\ddk a+u\cdot\nabla\ddk a+\ddk a=   -\ddk(a\div u)-\ddk\div w+\dot R_k,                 
 $$
 where, according to \eqref{eq:transport2}, the remainder term $\dot R_k$ satisfies:
 $$
\forall k\in\Z,\; \|\dot R_k\|_{L^p}\leq C c_k 2^{-k\frac dp}\|\nabla u\|_{\dot B^{\frac dp}_{p,1}}\|a\|_{\dot B^{\frac dp}_{p,1}}
 \quad\hbox{with}\quad \sum_{k\in\Z}c_k=1         
 $$
 and 
 $$
 \|a\,\div u\|_{\dot B^{\frac dp}_{p,1}}\leq C\|\div u\|_{\dot B^{\frac dp}_{p,1}}\|a\|_{\dot B^{\frac dp}_{p,1}}. 
 $$
 Therefore evaluating the $L^p$ norm of $\ddk a$ seen as the solution to a transport equation, 
 multiplying by $2^{k\frac dp}$  and summing up over $k\geq k_0$ yields \eqref{eq:a}.
  \medbreak
 Next, let us observe that, owing to the high frequency cut-off, we have for some universal constant $C,$
 \begin{equation}\label{eq:hfdecay}
 \|w\|^h_{\dot B^{\frac dp-1}_{p,1}}\leq C2^{-2k_0} \|w\|^h_{\dot B^{\frac dp+1}_{p,1}}
 \quad\hbox{and}\quad\|(-\Delta)^{-1}\nabla a\|_{\dot B^{\frac dp-1}_{p,1}}^h\leq C2^{-2k_0}
 \|a\|_{\dot B^{\frac dp}_{p,1}}^h.
 \end{equation}
 In consequence, combining \eqref{eq:w} and \eqref{eq:a}, and choosing $k_0$ large enough yields
 \begin{multline}\label{eq:HFQu}
  \|w\|_{\wt E_p(t)}^h+ \|a(t)\|_{\dot B^{\frac dp}_{p,1}}^h+\|a\|_{L^1_t(\dot B^{\frac dp}_{p,1})}^h
  \leq\biggl(\|w_0\|_{\dot B^{\frac dp-1}_{p,1}}^h +
  \|a_0\|^h_{\dot B^{\frac dp}_{p,1}}\\+ \|f-\div g\|_{L^1_t(\dot B^{\frac dp-2}_{p,1})}^h
  +\int_0^t\|\nabla u\|_{\dot B^{\frac dp}_{p,1}}\|a\|_{\dot B^{\frac dp}_{p,1}}\,d\tau\biggr)\cdotp
 \end{multline}
 
 \subsubsection*{Fourth step: end of the proof of the linear estimate}

 Putting Inequality \eqref{eq:HFQu} together with \eqref{eq:LF} and \eqref{eq:HFPu} and observing that
 $$
 \|u\|_{\wt E_p(t)}^h\leq  \|\cP u\|^h_{E_p(t)}+ \|w\|_{\wt E_p(t)}^h+C\bigl( \|a\|_{\wt L^\infty_t(\dot B^{\frac dp-2}_{p,1})}^h+\|a\|_{L^1_t(\dot B^{\frac dp}_{p,1})}^h\bigr),
 $$ 
 we come to the conclusion (if $k_0$ has been taken large enough) that
 $$
 X_p(t)\leq C_{k_0}\biggl(X_p(0)+\int_0^t\bigl(\|(f,g)\|_{\dot B^{\frac d2-1}_{2,1}}^\ell
 +\|f\|_{\dot B^{\frac dp-2}_{p,1}}^h+\|g\|_{\dot B^{\frac dp-1}_{p,1}}^h+
 \|\nabla u\|_{\dot B^{\frac dp}_{p,1}}\|a\|_{\dot B^{\frac dp}_{p,1}}\bigr)\,d\tau\biggr)\cdotp
$$
 
\subsubsection*{Fifth step : nonlinear estimates}

It is only a matter of proving that under hypothesis \eqref{eq:pasdevide}, we have
\begin{equation}\label{eq:rhs}
\int_0^t\bigl(\|(f,g)\|_{\dot B^{\frac d2-1}_{2,1}}^\ell
 +\|f\|_{\dot B^{\frac dp-2}_{p,1}}^h+\|g\|_{\dot B^{\frac dp-1}_{p,1}}^h+
 \|\nabla u\|_{\dot B^{\frac dp}_{p,1}}\|a\|_{\dot B^{\frac dp}_{p,1}}\bigr)\,d\tau
\leq CX_p^2(t).
\end{equation}
As $p\geq2,$ it is clear that the last term of the r.h.s. of \eqref{eq:rhs} is bounded by $CX_p^2(t).$
Next, arguing exactly as in the proof of the local existence, we easily get for $1\leq p<2d,$
$$
\|f\|_{L^1_t(\dot B^{\frac dp-1}_{p,1})}\leq C\|a\|_{L^2_t(\dot B^{\frac dp}_{p,1})}\|u\|_{L^2_t(\dot B^{\frac dp}_{p,1})},
$$
$$\displaylines{
\quad\|g\|_{L^1_t(\dot B^{\frac dp-1}_{p,1})}\leq C\bigl(
 \|u\|_{L^\infty_t(\dot B^{\frac dp-1}_{p,1})}\|\nabla u\|_{L^1_t(\dot B^{\frac dp}_{p,1})}
  \hfill\cr\hfill+\|a\|_{L^\infty_t(\dot B^{\frac dp}_{p,1})}\|\nabla^2u\|_{L^1_t(\dot B^{\frac dp-1}_{p,1})}
  + \|a\|_{L^2_t(\dot B^{\frac dp}_{p,1})}\|\nabla a\|_{L^2_t(\dot B^{\frac dp-1}_{p,1})}\bigr)\cdotp\quad}
$$
Therefore, using the definition of $X_p(t)$ and  embedding (recall that $p\geq2$),   
we get $$\|(f,g)\|^h_{L_t^1(\dot B^{\frac dp-1}_{p,1})}\leq CX_p^2(t).$$ 
So we are left with the proof of 
\begin{equation}\label{eq:nl}
\|(f,g)\|_{L^1_t(\dot B^{\frac d2-1}_{2,1})}^\ell\leq CX_p^2(t).
\end{equation}
Let us admit  the following two inequalities (the first one being proved in \cite{DH} and
the second one being a particular case of Proposition \ref{p:op} followed by suitable embedding, owing to  $1\leq p/2\leq 2$)~:
\begin{equation}\label{eq:paracont}
\|T_{a}b\|_{\dot B^{s-1+\frac d2-\frac dp}_{2,1}}\leq C\|a\|_{\dot B^{\frac dp-1}_{p,1}}\|b\|_{\dot B^s_{p,1}}
\quad\hbox{if }\ d\geq2 \ \hbox{ and }\  \textstyle \frac d{d-1}\leq p\leq\min\bigl(4,\frac{2d}{d-2}\bigr),
\end{equation} 
\begin{equation}\label{eq:restecont}
\|R(a,b)\|_{\dot B^{s-1+\frac d2-\frac dp}_{2,1}}\leq C\|a\|_{\dot B^{\frac dp-1}_{p,1}}\|b\|_{\dot B^s_{p,1}}
\quad\hbox{if }\  \textstyle s>1-\min\bigl(\frac dp,\frac d{p'}\bigr)\ \hbox{ and }\  1\leq p\leq4.
\end{equation}

In order to  prove \eqref{eq:nl} for $f,$ it suffices to bound $(au)^\ell$
in $L^1(0,t;\dot B^{\frac d2}_{2,1})$.
Now,  using Bony's decomposition and the fact that $a=a^\ell+a^h,$ we see that
\begin{equation}\label{eq:deca}
(au)^\ell=\bigl(T_au)^\ell+\bigl(R(a,u)\bigr)^\ell+\bigl(T_ua^\ell\bigr)^\ell+\bigl(T_ua^h\bigr)^\ell.
\end{equation}
The first three terms may be bounded thanks to Prop. \ref{p:op}  and Inequalities \eqref{eq:paracont}, \eqref{eq:restecont}  with $s=\frac dp-1.$ 
Observing that $\|z\|^\ell_{\dot B^\sigma_{r,1}}\leq C\|z\|_{\dot B^\sigma_{r,1}}$ for  any Besov norm, 
we get
$$\begin{array}{lll}
\|(T_au)^\ell\|_{L^1_t(\dot B^{\frac d2}_{2,1})}\leq C\|a\|_{L^\infty_t(\dot B^{\frac dp-1}_{p,1})}\|u\|_{L^1_t(\dot B^{\frac dp+1}_{p,1})},\\
\|(R(a,u))^\ell\|_{L^1_t(\dot B^{\frac d2}_{2,1})}\leq C\|a\|_{L^\infty_t(\dot B^{\frac dp-1}_{p,1})}\|u\|_{L^1_t(\dot B^{\frac dp+1}_{p,1})},\\
\|(T_ua^\ell)^\ell\|_{L^1_t(\dot B^{\frac d2}_{2,1})}\leq C\|u\|_{L^\infty_t(\dot B^{-1}_{\infty,\infty})}\|a^\ell\|_{L^1_t(\dot B^{\frac d2+1}_{2,1})}.
\end{array}
$$
Because $\dot B^{\frac dp-1}_{p,1}$ is embedded in $\dot B^{-1}_{\infty,\infty},$ the above right-hand sides may  be bounded by $CX_p^2(t).$
To handle the last term of \eqref{eq:deca}, we just have to observe that owing to the spectral cut-off, there exists a universal integer $N_0$ so that
$$
\bigl(T_ua^h\bigr)^\ell=\dot S_{k_0+1}\Bigl(\sum_{|k-k_0|\leq N_0} \dot S_{k-1}u\,\ddk a^h\Bigr)\cdotp
$$
Hence $\|T_ua^h\|_{\dot B^{\frac d2}_{2,1}}\approx 2^{k_0\frac d2}\sum_{|k-k_0|\leq N_0} \|\dot S_{k-1}u\,\ddk a^h\|_{L^2}.$
Now, if $2\leq p\leq \min(d,2d/(d-2))$ then we may use for $|k-k_0|\leq N_0$ 
$$
2^{k_0\frac d2} \|\dot S_{k-1}u\,\ddk a^h\|_{L^2}\leq C 2^{k_0}\, \|\dot S_{k-1}u\|_{L^d}\,\bigl(2^{k\frac dp}\|\ddk a^h\|_{L^p}\bigr),
$$
 and if $d\leq p\leq 4$ then 
$$ 2^{k_0\frac d2}\|\dot S_{k-1}u\,\ddk a^h\|_{L^2}\leq C2^{k_0}\bigl(2^{k(\frac dp-1)}\|\dot S_{k-1}u\|_{L^p}\bigr)\bigl(2^{k\frac dp}\|\ddk a^h\|_{L^p}\bigr).
$$
Hence one may conclude that $f$ satisfies \eqref{eq:nl}.
Bounding  $g$  is similar (see \cite{DH}).

\subsubsection*{Last step:  Global estimate}

Putting all the previous estimates together, we get
\begin{equation}\label{eq:blue}
X_p(t)\leq C\bigl(X_p(0)+X_p^2(t)).
\end{equation}
Now it is clear that 
as long as 
\begin{equation}\label{eq:purple}
2CX_p(t)\leq 1,
\end{equation}
 Inequality \eqref{eq:blue}  ensures that 
\begin{equation}\label{eq:red}
X_p(t)\leq 2CX_p(0).
\end{equation}
Using a bootstrap argument, one may
conclude that if $X_p(0)$ is small enough  then \eqref{eq:pasdevide} and 
\eqref{eq:purple}  are satisfied as long as the solution exists. 
Hence  \eqref{eq:red} holds globally in time.

\subsubsection{The proof of Theorem \ref{th:main}}

We just give the important steps. We fix some initial data so that $X_0$ is small enough. First, Theorem \ref{th:small1} implies that 
there exists a unique maximal  solution   $(a,u)$ to \eqref{eq:nsc} on some time interval $[0,T^*[,$ 
with $a\in\cC([0,T^*);\dot B^{\frac dp-1}_{p,1}),$ $\|a\|_{L^\infty(0,T^*\times\R^d)}\leq1/2$ and $u \in \cC([0,T^*);\dot B^{\frac dp-1}_{p,1})                                                                                                                                                                                                                                                                                                                                                                                                                                                                                                                   \cap L^1_{loc}(0,T^*;\dot B^{\frac dp+1}_{p,1}).$
{}From \eqref{eq:nsc} and Proposition \ref{p:lph}, one may check that the additional low frequency information is preserved on $[0,T^*)$: we have
 $$a^\ell\in\cC([0,T^*);\dot B^{\frac d2-1}_{2,1})\cap L^1(0,T^*;\dot B^{\frac d2+1}_{2,1})\quad\hbox{and}\quad u^\ell \in \cC([0,T^*);\dot B^{\frac d2-1}_{2,1})                                                                                                                                                                                                                                                                                                                                                                                                                                                                                                                   \cap L^1_{loc}(0,T^*;\dot B^{\frac d2+1}_{2,1}).$$
Let us assume (by contradiction) that $T^*<\infty.$ 
Then applying \eqref{eq:red} for all $t<T^*$ yields 
$$
\|a\|_{\wt L^\infty_{T^*}(\dot B^{\frac dp}_{p,1})}+\|u\|_{\wt L^\infty_{T^*}(\dot B^{\frac dp-1}_{p,1})}\leq C X_0.
$$
If  $X_0$ is  so small as    \eqref{eq:red}  to imply  that both \eqref{eq:a0}
and \eqref{eq:smallu} are fulfilled on $[0,T^*)$ then, for all $t_0\in[0,T^*),$ 
 one can  solve \eqref{eq:nsc} starting with data $(a(t_0),u(t_0))$ at time $t=t_0$ 
 and get a solution according to  Theorem \ref{th:small1} on the interval $[t_0,T+t_0]$ with 
 \emph{$T$ independent of $t_0$}. Choosing $t_0>T^*-T$ thus shows that  the solution can be continued beyond $T^*,$ a contradiction.\qed


\section{Asymptotic results}

In this section, we focus on two types of asymptotic issues for  small global solutions to \eqref{eq:nsbaro} that
received a lot of attention since the eighties: the low Mach number asymptotic, and 
the long time behavior.  We shall see that essentially optimal  results
may be obtained by very simple arguments from the global result   we established in the previous section.

\subsection{The low Mach number limit}

This subsection is devoted to the rigorous justification of  the convergence of \eqref{eq:nsbaro} to the  incompressible Navier-Stokes equations 
\begin{equation}\label{eq:ins}
\left\{\begin{array}{l}\d_tu+u\cdot\nabla u-\mu\Delta u+\nabla\Pi=0,\\ \div u=0,\end{array}\right.
\end{equation}
in the so-called \emph{ill-prepared} data case, where we only assume that
$\ep^{-1}(\rho_0^\ep-1)$ and $u_0^\ep$ are suitably bounded. 
In particular,   if we set $a^\ep\eqdefa\ep^{-1}(\rho^\ep-1), $
 this means that $(\d_ta^\ep,\d_tu^\ep)|_{t=0}$ is of order $1/\ep,$
and that one cannot exclude  highly oscillating acoustic waves. 
 More concretely, we  have to pass to the limit $\ep\to0$ in:
\begin{equation}  \label{eq:NSeps}
  \left\{    \begin{aligned}
      & \d_t a^\eps+\frac{\div u^\eps}{\eps}=-\div (a^\eps u^\eps),\\
      &\d_t u^\eps + u^\eps\cdot\nabla u^\eps-\frac{\cA u^\eps}{1+\eps a^\eps} +\frac{\nabla  a^\eps}\eps=\frac{k(\eps a^\eps)}\eps\nabla a^\eps
      \\&\hspace{3.5cm}+\frac1{1+\eps a^\eps}\div\bigl(2\wt\mu(\eps a^\eps) D(u^\eps)+\wt\lambda(\eps a^\eps)
       \div u^\eps\Id\bigr).
        \end{aligned}\right.
\end{equation}
Before stating our main results, let us introduce some notation. In this section,
 we agree that for $z\in\cS'(\R^d),$  
\begin{equation}
\label{eq:decompo}  z^{\ell,\beta}\eqdefa\sum_{2^j\beta\leq 2^{j_0}} \ddj z\quad\mbox{and}\quad z^{h,\beta}\eqdefa\sum_{2^j\beta>2^{j_0}} \ddj z, \end{equation}
 for some large enough nonnegative integer $j_0$  depending only on $p,$ $d,$ and on the functions $k,$  $\lambda/\nu,$ $\mu/\nu$ with $\nu\eqdefa\lambda+2\mu.$
 The corresponding  ``truncated'' semi-norms are defined  by 
\begin{equation} \|z\|^{\ell, \beta}_{\dot B^{\sigma}_{p,r}}\eqdefa  \|z^{\ell,\beta}\|_{\dot B^{\sigma}_{p,r}} 
\ \hbox{ and }\   \|z\|^{h,\beta}_{\dot B^{\sigma}_{p,r}}\eqdefa  \|z^{h,\beta}\|_{\dot B^{\sigma}_{p,r}}. \end{equation}
 Keeping in mind the linear analysis we performed for \eqref{eq:LPH}
 in the case $\nu=1$ and $\eps=1,$ and combining with the change of variable
 \begin{equation}\label{eq:changebis}
(a,u)(t,x)\eqdefa\eps (a^\eps,u^\eps)(\eps^2\nu t,\eps\nu x),
\end{equation}
we expect the threshold between low and high frequencies to be at $1/\wt\eps$  with $\wt\eps\eqdefa\eps\nu$,
 and it is thus natural to  consider families of data $(a_0^\eps,u_0^\eps)$ such  that
$$
\|(a_0^\eps,u_0^\eps)\|_{\dot B^{\frac d2-1}_{2,1}}^{\ell,\wt\eps}+
\|a_0^\eps\|^{h,\wt\eps}_{\dot B^{\frac dp}_{p,1}}+\|u_0^\eps\|^{h,\wt\eps}_{\dot B^{\frac dp-1}_{p,1}}
$$
is bounded independently of $\eps.$ 
We expect the corresponding  solutions  of  \eqref{eq:NSeps}
to be uniformly in the space $X^{p}_{\eps,\nu}$  defined by 
\begin{itemize}
\item  $(a^{\ell,\wt\eps} ,u^{\ell,\wt\eps} )\in \wt\cC_b(\R_+;\dot B^{\frac d2-1}_{2,1})\cap L^1(\R_+;\dot B^{\frac d2+1}_{2,1}),$
\item  $a^{h,\wt\eps}\in\wt\cC_b(\R_+;\dot B^{\frac dp}_{p,1})\cap L^1(\R_+; \dot B^{\frac dp}_{p,1}),$
\item $u^{h,\wt\eps}\in\wt\cC_b(\R_+;\dot B^{\frac dp-1}_{p,1})\cap L^1(\R_+;\dot B^{\frac dp+1}_{p,1}),$
\end{itemize}
and  endowed  with the norm:
$$\displaylines{
\|(a,u)\|_{X^{p}_{\eps,\nu}}\eqdefa \|(a,u)\|^{\ell,\wt\eps}_{\wt L^\infty(\dot B^{\frac d2-1}_{2,1})}+\|u \|^{h,\wt\eps}_{\wt L^\infty(\dot B^{\frac dp-1}_{p,1})}
+\wt\eps\|a\|^{h,\wt\eps}_{\wt L^\infty(\dot B^{\frac dp}_{p,1})}
\hfill\cr\hfill+\nu\|(a,u)\|^{\ell,\wt\eps}_{L^1(\dot B^{\frac d2+1}_{2,1})}+\nu\|u\|^{h,\wt\eps} _{L^1(\dot B^{\frac dp+1}_{p,1})}+\eps^{-1}\|a\|^{h,\wt\eps}_{L^1(\dot B^{\frac dp}_{p,1})}.}
$$
One can now state our main result of convergence in the small data case, the reader being referred to \cite{D1a,D1b}
for the large data case and stronger results of convergence. 
\begin{theorem}\label{th:main1} Assume that the fluid domain is either $\R^d$ or $\T^d,$ 
that the initial data $(a_0^\eps, u_0^\eps)$ are as above  and that $p$ is as in Theorem \ref{th:main}. 
 There exists a constant $\eta$ independent of $\eps$ and of $\nu$ such that  if
\begin{equation}\label{eq:smalldata1}
C_0^{\eps,\nu}\eqdefa\|(a_0^\eps,u_0^\eps)\|^{\ell,\wt\eps}_{\dot B^{\frac d2-1}_{2,1}}+\|u_0^\eps\|^{h,\wt\eps}_{\dot B^{\frac dp-1}_{p,1}}+\wt\eps\|a_0^\eps\|^{h,\wt\eps}_{\dot B^{\frac dp}_{p,1}}\leq \eta \nu,
\end{equation}
then System \eqref{eq:NSeps} with initial data $(a_0^\eps, u_0^\eps)$ has a unique global
solution $(a^\eps,u^\eps)$   in the space $X^{p}_{\eps,\nu}$ with, for some constant $C$ independent of $\eps$ and $\nu,$
\begin{equation}\label{eq:ue}
\|(a^\eps,u^\eps)\|_{X^{p}_{\eps,\nu}}\leq CC_0^{\eps,\nu}.
\end{equation}
In addition, $\cQ u^\eps$ converges weakly to $0$  when $\eps$ goes to $0,$ and, if $\cP u^\eps_0\rightharpoonup v_0$ then  $\cP u^\eps$ converges in the sense of distributions  to the unique solution of
\eqref{eq:ins} supplemented with initial data $v_0.$
\end{theorem}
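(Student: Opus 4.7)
The plan splits naturally into a uniform existence/bound step and a passage-to-the-limit step.

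For the first step, I reduce \eqref{eq:ue} to Theorem \ref{th:main} via the rescaling \eqref{eq:changebis}. Define $(\wt a,\wt u)(t,x)\eqdefa \eps(a^\eps, u^\eps)(\eps^2\nu t,\eps\nu x)$; as noted in the footnote preceding \eqref{eq:nscl}, this pair satisfies \eqref{eq:nsc} with $P'(1)=\nu=1$ (the modified pressure $\eps^2 P$ enters only via the bounded nonlinearity $k$). Under the spatial dilation $y=\wt\eps x,$ the Bernstein scaling \eqref{eq:scaling} turns the $\eps$-dependent cut-off at frequency $1/\wt\eps$ into the $\eps$-independent cut-off at frequency $1,$ and a direct verification shows that \eqref{eq:smalldata1} translates into $X_{p,0}\lesssim \eta$ in hypothesis \eqref{eq:globalsmall}. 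Theorem \ref{th:main} then yields a unique global solution $(\wt a,\wt u)\in X_p$ with $\|(\wt a,\wt u)\|_{X_p}\lesssim \eta,$ and undoing the change of variables gives \eqref{eq:ue}.

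For the convergence statements, the $\eps^{-1}\|a^\eps\|^{h,\wt\eps}_{L^1(\dot B^{d/p}_{p,1})}$ and $\nu\|a^\eps\|^{\ell,\wt\eps}_{L^1(\dot B^{d/2+1}_{2,1})}$ pieces of \eqref{eq:ue} force $a^\eps\to 0$ in $L^1_{loc}(\R_+;\dot B^{d/p}_{p,1})$ and the mass equation then forces $\div u^\eps\to 0$ in distributions, hence $\cQ u^\eps\rightharpoonup 0.$ The low-frequency acoustic pair $(a^{\ell,\wt\eps},\cQ u^{\ell,\wt\eps})$ satisfies a wave-type system with generator of order $1/\eps$ and source uniformly bounded in $L^1_t(\dot B^{d/2-1}_{2,1});$ on $\R^d$ one obtains strong decay of $\cQ u^\eps$ in $L^2_{loc}(\R_+;L^q_{loc})$ via dispersive estimates for the acoustic semi-group $e^{\pm it|D|/\eps},$ while on $\T^d$ one argues by Fourier expansion and a non-resonance time-averaging. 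Applying $\cP$ to the momentum equation kills $\nabla a^\eps/\eps$ and leaves a perturbed incompressible Navier--Stokes system for $\cP u^\eps,$ uniformly bounded in the critical space; combined with a bound on $\d_t\cP u^\eps$ in a negative-index space, Aubin--Lions yields strong compactness of $\cP u^\eps$ in $L^2_{loc}.$

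The main obstacle is the treatment of the cross nonlinear terms in $\cP(u^\eps\cdot\nabla u^\eps)-\cP(\cP u^\eps\cdot\nabla\cP u^\eps),$ whose components $\cP(\cQ u^\eps\cdot\nabla u^\eps)$ and $\cP(\cP u^\eps\cdot\nabla\cQ u^\eps)$ carry fast acoustic oscillations of frequency $1/\eps.$ On $\R^d$ these tend to zero in distributions by dispersion of $\cQ u^\eps;$ on $\T^d$ the oscillations average to zero by non-resonance, with any would-be resonant remainder annihilated by $\cP$ or absorbable in the pressure. Once these are disposed of, passing to the limit in the weak formulation of the $\cP u^\eps$ equation yields \eqref{eq:ins} with initial datum $v_0=\lim \cP u_0^\eps,$ and uniqueness in the critical space $L^\infty_t(\dot B^{d/2-1}_{2,1})\cap L^1_t(\dot B^{d/2+1}_{2,1})$ identifies the limit as the unique solution of the incompressible system.
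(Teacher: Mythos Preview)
Your first step (rescaling to reduce \eqref{eq:ue} to Theorem~\ref{th:main}) is exactly what the paper does. For the convergence step, however, you take a genuinely different route. You invoke dispersive estimates for the acoustic semi-group $e^{\pm it|D|/\eps}$ on $\R^d$ and a non-resonance averaging argument on $\T^d$ to kill the cross terms carrying $\cQ u^\eps$. The paper does \emph{not} use dispersion at all: it follows the pure compactness scheme of Lions--Masmoudi. The key algebraic observation there is that $\cQ u^\eps\cdot\nabla\cQ u^\eps=\frac12\nabla|\cQ u^\eps|^2$ is a gradient, hence annihilated by $\cP$; once this quadratic-in-$\cQ u^\eps$ term is removed for free, the remaining cross terms $\cP(\cP u^\eps\cdot\nabla u^\eps)$ and $\cP(\cQ u^\eps\cdot\nabla\cP u^\eps)$ each contain a factor of $\cP u^\eps$, whose strong local convergence (obtained, as you also suggest, from a bound on $\d_t\cP u^\eps$ combined with Ascoli and locally compact Besov embeddings) suffices to pass to the limit against the merely weakly convergent factors $\nabla u^\eps$ and $\cQ u^\eps$. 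Your approach is correct and is in fact the one used in \cite{D1a,D1b} to obtain \emph{stronger} convergence results; but for the distributional convergence actually asserted in the theorem, the paper's compactness-only argument is more economical and has the advantage of treating $\R^d$ and $\T^d$ uniformly without separate dispersive or spectral machinery.
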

\begin{proof}
Performing the change of unknowns given in \eqref{eq:changebis}
and the  change of data
\begin{equation}\label{eq:changedata}
(a_0,u_0)(x)\eqdefa \eps(a^\eps_0,u^\eps_0)(\eps\nu x)
 \end{equation}
reduces the proof of the global existence to the case   $\nu=1$ and $\eps=1,$
which was done in Theorem \ref{th:main}. 
Back  to the original variables will yield the desired uniform estimate \eqref{eq:ue}
under Condition \eqref{eq:smalldata1}. Indeed, we notice that we have
up to some harmless constant:
$$
\|(a_0^\eps,u_0^\eps)\|^{\ell,\wt\eps}_{\dot B^{\frac d2-1}_{2,1}}+\|u_0^\eps\|^{h,\wt\eps}_{\dot B^{\frac dp-1}_{p,1}}
+\wt\eps\|a_0^\eps\|^{h,\wt\eps}_{\dot B^{\frac dp}_{p,1}}=\nu\bigl(\|(a_0,u_0)\|^{\ell,1}_{\dot B^{\frac d2-1}_{2,1}}
+\|u_0\|^{h,1}_{\dot B^{\frac dp-1}_{p,1}}+\|a_0\|_{\dot B^{\frac dp}_{p,1}}^{h,1}\bigr)
$$
and
$$
\|(a^\eps,u^\eps)\|_{X^{p}_{\eps,\nu}}=\nu\|(a,u)\|_{X^{p}_{1,1}}.
$$

Granted with the uniform estimates established in the previous section,
it is now easy to pass to the limit in the system in the sense of distributions, 
by adapting the compactness arguments of  P.-L. Lions and N. Masmoudi  in \cite{LM}. 

More precisely,  consider a family $(a_0^\eps,u_0^\eps)$ of data satisfying \eqref{eq:smalldata1}
and $\cP u^\eps_0\rightharpoonup v_0$ when $\eps$ goes to $0.$
Let $(a^\eps,u^\eps)$  be the corresponding  solution of \eqref{eq:NSeps}
given by  the first part of Theorem \ref{th:main1}.
Because
\begin{equation}\label{eq:conv0}
\|a_0^{\eps}\|_{\dot B^{\frac dp-1}_{p,1}}^{h,\wt\eps}\lesssim \wt\eps\|a_0^{\eps}\|_{\dot B^{\frac dp}_{p,1}}^{h,\wt\eps},
\end{equation}
the data  $(a^{\eps}_0,u_{0}^{\eps})$ are uniformly bounded
in $\dot B^{\frac dp-1}_{p,1}\times \dot B^{\frac dp}_{p,1}.$ 
Likewise, \eqref{eq:ue} ensures that 
  $(a^{\eps},u^{\eps})$ is bounded in the space  $\cC_b(\R_+;\dot B^{\frac dp-1}_{p,1}).$
Therefore there exists a sequence $(\eps_n)_{n\in\N}$ decaying to $0$ so that
$(a^{\eps_n}_0,u_0^{\eps_n})\rightharpoonup (a_0,u_0)
\quad\!\!\hbox{in}\!\!\quad \dot B^{\frac dp-1}_{p,1}$  (with $\cP u_0= v_0$) and 
\begin{equation}\label{eq:conv00} 
(a^{\eps_n},u^{\eps_n})\rightharpoonup (a,u)
\quad\!\!\hbox{in}\!\!\quad L^\infty(\R_+;\dot B^{\frac dp-1}_{p,1})\quad\hbox{weak}\ *.\end{equation}
The strong convergence of the density to $1$  is obvious: we have
$\varrho^{\eps_n}=1+\eps_na^{\eps_n},$
and  $(a^{\eps_n})_{n\in\N}$ is bounded in $L^2(\R_+;\dot B^{\frac dp}_{p,1}).$
In order to justify  that $\div u=0,$ we rewrite the  mass equation as follows:
$$
\div u^{\eps_n}=-\eps_n\div(a^{\eps_n} u^{\eps_n})-\eps_n\d_ta^{\eps_n}.
$$
Given that $a^{\eps_n}$ and $u^{\eps_n}$ are bounded in $L^2(\R_+;\dot B^{\frac dp}_{p,1})$
(use the definition of $X^{p}_{\eps,\nu}$ and interpolation), 
the first term in the right-hand side is  $\cO(\eps_n)$ in $L^1(\R_+;\dot B^{\frac dp-1}_{p,1}).$
As for the last term, it tends to $0$ in the sense of distributions, by virtue of \eqref{eq:conv00}. 
We thus have $\div u^{\eps_n}\rightharpoonup 0,$ whence $\div u=0.$
\medbreak
To  establish that  $u$ is a solution to \eqref{eq:ins}, let us
 project the velocity equation onto divergence-free vector fields:
\begin{equation}\label{eq:conv1}
\d_t\cP u^{\eps_n}-\mu\Delta\cP u^{\eps_n}=-\cP( u^{\eps_n}\cdot\nabla u^{\eps_n})
-\cP\biggl(\frac1{1+\eps_na^{\eps_n}}\cA u^{\eps_n}\biggr)\cdotp
\end{equation}
Because $\cQ u=0,$ the left-hand side weakly converges to $\d_t u-\mu\Delta u.$
To prove that the last term tends to $0,$ we use the fact that having $\wt\eps (a^{\eps})^{h,\wt\eps}$
and $(a^{\eps})^{\ell,\wt\eps}$ bounded in $L^\infty(\dot B^{\frac dp}_{p,1})$ and $L^\infty(\dot B^{\frac dp-1}_{p,1}),$ respectively, implies that, for all $\alpha\in[0,1],$
\begin{equation}\label{eq:aaa}
\wt\eps^\alpha a^{\eps}\quad\hbox{is bounded in }\   L^\infty(\dot B^{\frac dp-1+\alpha}_{p,1}).
\end{equation}
Now,  $\cA u^\eps$ is bounded in $L^1(\dot B^{\frac dp-1}_{p,1})$ and   $p<2d.$ 
Hence,  according to product laws in Besov spaces, composition inequality and \eqref{eq:aaa}, we get
   $(1+\eps a^{\eps})^{-1}\cA u^{\eps}=\cO(\wt\eps^{1-\alpha})$
in $L^1(\dot B^{\frac dp-2+\alpha}_{p,1}),$ whenever $2\max(0,1-\frac dp)<\alpha\leq1.$
Hence the last term of \eqref{eq:conv1} goes strongly to $0$ for some appropriate norm.
\smallbreak
In order to  prove that $\cP(u^{\eps_n}\cdot\nabla u^{\eps_n})\rightharpoonup \cP(u\cdot\nabla u),$ we  note that
$$u^{\eps_n}\cdot\nabla u^{\eps_n}=\frac12\nabla|\cQ u^{\eps_n}|^2+\cP u^{\eps_n}\cdot\nabla u^{\eps_n}
+\cQ u^{\eps_n}\cdot\nabla\cP u^{\eps_n}.$$
Projecting the first term onto divergence free vector fields gives $0,$ and we also know that $\cP u=u.$
Hence we just have to prove that
\begin{equation}\label{eq:conv2}
\cP(\cP u^{\eps_n}\cdot\nabla u^{\eps_n})\rightharpoonup \cP(\cP u\cdot\nabla u)
\quad\hbox{and}\quad
\cP(\cQ u^{\eps_n}\cdot\nabla\cP u^{\eps_n})\rightharpoonup0.
\end{equation}
This requires our proving results of strong convergence for $\cP u^{\eps_n}.$ 
To this end, we shall  exhibit uniform  bounds  for $\d_t\cP u^{\eps_n}$ in a suitable space. 
First, arguing by interpolation, we see that $(\nabla^2 u^{\eps_n})$ is bounded
in $L^m(\dot B^{\frac dp+\frac 2m-3}_{p,1})$ for any $m\geq1.$ 
Choosing $m>1$ so that $\frac2m-3>-d\min(\frac{2}p,1)$ (this is possible as $p<2d$) and remembering
that $(\eps^na^{\eps_n})$ is bounded in $L^\infty(\dot B^{\frac dp}_{p,1}),$ we thus 
get $((1+\eps_na^{\eps_n})^{-1}\cA u^{\eps_n})$ bounded in $L^m(\dot B^{\frac dp+\frac2m-3}_{p,1}).$
Similarly, combining the facts that $(u^{\eps_n})$ and $(\nabla u^{\eps_n})$ are bounded in 
 $L^\infty(\dot B^{\frac dp-1}_{p,1})$ and $L^m(\dot B^{\frac dp+\frac2m-2}_{p,1}),$ respectively, we see that 
$(u^{\eps_n}\cdot\nabla u^{\eps_n})$ is  bounded in $L^m(\dot B^{\frac dp+\frac2m-3}_{p,1}),$ too. 
Computing $\d_t\cP u^{\eps_n}$ from \eqref{eq:conv1}, it is now clear that 
 $(\d_t\cP u^{\eps_n})$ is bounded in  $L^{m}(\dot B^{\frac dp+\frac2m-3}_{p,1}).$
 Hence  $(\cP u^{\eps_n}-\cP u^{\eps_n}_0)$ is bounded in $\cC^{1-\frac1m}(\R_+; \dot B^{\frac dp+\frac2m-3}_{p,1}).$
As $\cP u^{\eps_n}$ is also bounded in $\cC_b(\R_+;\dot B^{\frac dp-1}_{p,1}),$
and as the embedding of $\dot B^{\frac dp-1}_{p,1}$ in $\dot B^{\frac dp+\frac2m-3}_{p,1}$ is locally compact
(see e.g. \cite{BCD}, page 108), we conclude by means of Ascoli theorem that, up to a new extraction,  for all $\phi\in\cS(\R^d)$ and $T>0,$
\begin{equation}\label{eq:conv3}
\phi\cP u^{\eps_n}\longrightarrow \phi\cP u \quad\hbox{in}\quad  \cC([0,T];\dot B^{\frac dp+\frac2m-3}_{p,1}).
\end{equation}
Interpolating  with the bounds in $\cC_b(\R_+;\dot B^{\frac dp-1}_{p,1}),$ we can upgrade
the strong convergence in \eqref{eq:conv3}
 to the space $\cC([0,T];\dot B^{\frac dp-1-\alpha}_{p,1})$ for all small enough $\alpha>0,$
and all $T>0.$ Combining with the properties of weak convergence
for $\nabla u^{\eps_n}$ to $\nabla u,$ and $\cQ u^{\eps_n}$ to $0$ that may be deduced
from the bounds of $u^{\eps_n},$ it is now easy to conclude to \eqref{eq:conv2}.
One can use for instance the fact that for all $m>1,$ we have
$$
\nabla u^{\ep_n}\rightharpoonup\nabla u \ \hbox{ in }\ L^m(\dot B^{\frac dp+\frac2m-2}_{p,1})
\ \hbox{weak }\ *\quad\hbox{and}\quad
\cQ u^{\ep_n}\rightharpoonup 0 \ \hbox{ in }\ L^m(\dot B^{\frac dp+\frac2m-1}_{p,1})
\ \hbox{weak }\ *.
$$

\end{proof}


\subsection{Time decay estimates}

In the present subsection, we show that under a mild additional 
decay assumption that is satisfied if the data are in $L^1(\R^d)$ for instance, the $L^2$ norm (the $\dot B^0_{2,1}$ norm in fact) 
of the global solutions constructed in Theorem \ref{th:main}  decays like $t^{-\frac d4}$ for $t\to+\infty,$
exactly as  for the linearized equations.
This fact has been first observed by A. Matsumura and T. Nishida in \cite{MN}
in the case of solutions with high Sobolev regularity. 
The adaptation to the $L^2$ type critical regularity framework has been carried out recently by 
M. Okita in \cite{Okita}, in dimension  $d\geq3.$
Below, we give a more accurate description of the time decay, emphasizing a better
decay for   high frequencies. This is the key to   handling  any dimension $d\geq2.$
For simplicity, we concentrate on the $L^2$ type framework, even though we expect 
similar results to be true in the
more general  $L^p$ framework of Theorem \ref{th:main}.
\begin{theorem}\label{th:decay}
Let the data $(a_0,u_0)$ satisfy the assumptions of Theorem \ref{th:main} with $p=2$ and assume with 
no loss of generality that $P'(1)=1$ and that $\nu=1.$ Denote $\langle \tau\rangle\eqdefa\sqrt{1+\tau^2}$
and $\alpha\eqdefa\min(\frac d4+2,\frac d2+\frac12-\ep)$ with $\ep>0$ arbitrarily small. 
 There exists a positive constant $c$ so that if in addition 
\begin{equation}\label{eq:D0}
D_0\eqdefa\sup_{k\leq k_0}\bigl(\|\cF(\ddk a_0)\|_{L^\infty}+\|\cF(\ddk u_0)\|_{L^\infty}\bigr)\leq c
\end{equation}
then  the global solution $(a,u)$ 
given by Theorem \ref{th:main} satisfies for all $t\geq0,$
\begin{equation}\label{eq:D}
D(t)\leq C\bigl(D_0+\|(a_0,\nabla a_0,u_0)\|_{\dot B^{\frac d2-1}_{2,1}}\bigr)
\end{equation}
$$\displaylines{\hbox{with }\  
D(t)\eqdefa\sup_{s\in(-\frac d2,2]}\|\langle\tau\rangle^{\frac d4+\frac s2}(a,u)\|_{L^\infty_t(\dot B^s_{2,1})}^\ell
+\|\langle\tau\rangle^{\alpha}(\nabla a,u)\|_{\wt L^\infty_t(\dot B^{\frac d2-1}_{2,1})}^h
+\|\tau\nabla  u\|_{\wt L^\infty_t(\dot B^{\frac d2}_{2,1})}^h.}$$
\end{theorem}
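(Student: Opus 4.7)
The plan is to run a bootstrap on $D(t)$, assuming the global-in-time solution $(a,u)\in X_2$ provided by Theorem \ref{th:main} is already at our disposal with $\|(a,u)\|_{X_2}\leq CX_{2,0}\ll 1$. We treat the right-hand side $F_1\eqdefa-\div(au)$ and $F_2\eqdefa-u\cdot\nabla u-I(a)\cA u-k(a)\nabla a$ as a source term in the linearized system \eqref{eq:LPH} and then apply a time-weighted Duhamel formula. The target estimate \eqref{eq:D} will follow if (i) the linear decay rates match the weights built into $D(t)$, and (ii) the nonlinear contribution through Duhamel is \emph{quadratic} in $D(t)+\|(a,u)\|_{X_2}$.

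For the low-frequency part of $D(t)$, the spectral computation behind \eqref{eq:Lrho} yields $\|\ddk e^{tL}U_0\|_{L^2}\lesssim e^{-c_0 2^{2k}t}\|\ddk U_0\|_{L^2}$ for $k\leq k_0$. The crucial gain from \eqref{eq:D0} is the pointwise Fourier bound $\|\cF(\ddk U_0)\|_{L^\infty}\lesssim D_0$, which combined with Plancherel and the compact support of $\cF(\ddk U_0)$ gives
$$\|\ddk e^{tL}U_0\|_{L^2}\lesssim D_0\,2^{kd/2}\,e^{-c_0 2^{2k}t}.$$
Multiplying by $2^{ks}$ and summing over $k\leq k_0$ produces $\|e^{tL}U_0\|_{\dot B^s_{2,1}}^\ell\lesssim D_0(1+t)^{-d/4-s/2}$ for any $s>-d/2$. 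For the nonlinear source, Bernstein's inequality trades $\sigma$ derivatives against $\sigma/2$ units of time decay, so that the low-frequency contribution of $F$ through Duhamel is controlled by an integral of the form $\int_0^t(1+t-\tau)^{-d/4-s/2-\sigma/2}\|F(\tau)\|_{\dot B^{-\sigma}_{2,1}}^\ell\,d\tau$ for suitably chosen $\sigma\in(0,d/2+s)$.

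For the high-frequency pieces we revisit the Lyapunov functional $\cL_k$ from \eqref{eq:Lk0}, now tracked with time weights: it shows that $(\nabla a)^h$ is damped exponentially and $u^h$ enjoys full parabolic smoothing, so the linear semigroup decays at any prescribed polynomial rate in the high-frequency norm. The precise exponent $\alpha=\min(d/4+2,\,d/2+1/2-\ep)$ is then pinned down by the decay of the nonlinearity through Duhamel: the value $d/4+2$ arises from integrating the $(1+\tau)^{-d/4}$ decay of the low-frequency part of $(a,u)$ against the exponential damping of the high-frequency semigroup (the two extra units being bought by that exponential), while $d/2+1/2-\ep$ is the best rate compatible with the self-interaction of the parabolic-smoothed norm of $u^h$, the $\ep$ absorbing a borderline loss in two dimensions. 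The third piece $\tau\nabla u^h$ at regularity $d/2$ is deduced from the previous ones by trading one power of $\tau$ for one extra space derivative, in accordance with the parabolic character of $u^h$.

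The main obstacle is the nonlinear bookkeeping: each product $au$, $u\cdot\nabla u$, $I(a)\cA u$, $k(a)\nabla a$ has to be split into its four frequency blocks (low--low, low--high, high--low, high--high) via Bony's decomposition \eqref{eq:bony}, and each block estimated in the Besov norm dictated by the Duhamel kernel at play. Proposition \ref{p:op} and Corollary \ref{c:op} reduce this to counting decay rates of bilinear contributions, but low--low interactions output $(1+\tau)^{-d/2-s/2}$ at regularity $s$ in the low-frequency Duhamel integral, which is only borderline integrable when $d=2$; this is precisely where the small $\ep>0$ in the definition of $\alpha$ becomes necessary. Once all the quadratic pieces have been absorbed and the composition estimate (Proposition \ref{p:comp}) handles $I(a)$ and $k(a)$, the bootstrap closes by taking $X_{2,0}$ and $D_0$ small enough to ensure that $D(t)^2\ll D(t)$, which delivers \eqref{eq:D}.
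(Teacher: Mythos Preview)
Your architecture matches the paper's---bootstrap on $D(t)$, Duhamel plus semigroup decay for low frequencies, the Lyapunov functional $\cL_k$ for high frequencies, and a parabolic trick for $\|\tau\nabla u\|^h$---but the execution differs in two places and your account of $\alpha$ is off. For the low-frequency nonlinear term you propose $\dot B^{-\sigma}_{2,1}$ and the four Bony blocks; the paper instead observes that the relevant Duhamel input is $\sup_{k\le k_0}\|\cF(\ddk F)\|_{L^\infty}\le\|F\|_{L^1}$, so plain Cauchy--Schwarz (e.g.\ $\|u\cdot\nabla a\|_{L^1}\le\|u\|_{L^2}\|\nabla a\|_{L^2}\lesssim\langle\tau\rangle^{-d/2-1/2}D^2(t)$) suffices and no paraproduct is needed. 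The resulting integrand is integrable for every $d\ge2$, so the $\ep$ plays \emph{no} role in this step, contrary to your claim. For the high-frequency piece the paper does not put $u\cdot\nabla a$, $u\cdot\nabla u$ into the source but keeps them on the left as in Proposition~\ref{p:lphbis}; after localisation they become the commutators $R_k(u,a)$, $R_k(u,u)$, $\wt R_k(u,a)$, and the time weights are distributed \emph{inside} (one estimates $\|R_k(\tau u,\tau^{\alpha-1}a)\|_{L^\infty_t(L^2)}$). The constraint on $\alpha$ then comes from bounding $\|\tau^{\alpha-1}a\|^\ell$ in $\tilde L^\infty(\dot B^{d/2})$ and $\tilde L^\infty(\dot B^{d/2-1})$ by the low-frequency part of $D(t)$: since the supremum there is restricted to $s\le2$, this forces $\alpha-1\le d/4+1$ (whence $d/4+2$), while for $d\le6$ the required regularity $d/2$ sits at the edge of the range $(-d/2,2]$ and must be replaced by $d/2-2\ep$, producing the $d/2+1/2-\ep$. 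So $\alpha$ is fixed by how much low-frequency decay of $a$ can be cashed in, not by $u^h$ self-interaction. Finally, for $\|\tau\nabla u\|^h_{\tilde L^\infty(\dot B^{d/2})}$ the paper makes the trade precise: it writes the equation satisfied by $t\cA u$ and applies the $\tilde L^\infty$ maximal regularity of Remark~\ref{r:heat} to the Lam\'e semigroup.
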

\begin{proof}
Throughout the proof, we shall use repeatedly  that 
for $0<\sigma_1\leq\sigma_2,$ we have:
\begin{equation}\label{eq:fact1}
\int_0^t\langle t-\tau\rangle^{-\sigma_1}\langle \tau\rangle^{-\sigma_2}\,d\tau
\lesssim\langle t\rangle^{-\sigma_1}\quad\hbox{if in addition }\ \sigma_2>1.
\end{equation}

\subsubsection*{Step 1: Bounds for the low frequencies} 
Denoting by $E(D)$ the semi-group associated to \eqref{eq:LPH}, we have for all $k\in\Z,$
\begin{equation}\label{eq:decay0}
\left(\begin{array}{c}\ddk a(t)\\\ddk u(t)\end{array}\right)
=e^{tE(D)}\left(\begin{array}{c}\ddk a_0\\\ddk u_0\end{array}\right)
-\int_0^te^{(t-\tau)E(D)}\left(\begin{array}{c}\ddk f_1(\tau)\\\ddk(f_2\!+\!f_3\!+\!f_4)(\tau)\end{array}\right)d\tau
\end{equation}
with $f_1\eqdefa \div(au),$ $f_2\eqdefa u\cdot\nabla u,$ $f_3\eqdefa k(a)\nabla a$ and $f_4\eqdefa I(a)\cA u.$
\medbreak
{}From an explicit computation of  the action of $e^{tE(D)}$ in Fourier variables (see e.g. \cite{CD}), we
discover that there exist positive constants $c$ and $C$ depending 
only on $k_0$ and such that 
$$|\cF(e^{tA(D)} U)(\xi)|\leq Ce^{-c_0t|\xi|^2}|\cF U(\xi)|\quad\hbox{for all }\  |\xi|\leq2^{k_0}.$$ 
Therefore,  for all $k\leq k_0,$
\begin{align*}\|e^{tE(D)}\ddk U\|_{L^2}^2&\lesssim \int e^{-2c_0|\xi|^2t}|\cF \ddk U(\xi)|^2\,d\xi\\
&\lesssim\|\cF\ddk U\|_{L^\infty}^2 2^{kd}\,e^{-c_02^{2k}t}.\end{align*}
We thus get up to a change of $c_0,$
\begin{equation}\label{eq:decay1}
t^{\frac d4+\frac s2}\sum_{k\leq k_0}2^{ks}\|e^{tE(D)}\ddk U\|_{L^2}\lesssim 
\Bigl(\sup_{k\leq k_0}\|\cF\ddk U\|_{L^\infty}\Bigr)\sum_{k\leq k_0}(\sqrt t\,2^k)^{\frac d2+s}\,e^{-c_02^{2k}t}.
\end{equation}
As for any $\sigma>0$ there  exists a constant $C_\sigma$ so that
\begin{equation}\label{eq:decay1a}
\sup_{t\geq0}\sum_{k\in\Z}t^{\frac\sigma2}2^{k\sigma}e^{-c_02^{2k}t}\leq C_\sigma,
\end{equation}
we get from \eqref{eq:decay1} that for $s>-d/2,$
$$
\sup_{t\geq0}\, t^{\frac d4+\frac s2}\|e^{tE(D)}U\|_{\dot B^s_{2,1}}^\ell\leq C_s  \sup_{k\leq k_0} \|\cF\ddk U\|_{L^\infty}.
$$
It is also obvious that  for $s>-d/2,$
$$
\|e^{tE(D)}U\|_{\dot B^s_{2,1}}^\ell\lesssim \|U\|_{\dot B^s_{2,1}}^\ell\lesssim \sup_{k\leq k_0} \|\cF\ddk U\|_{L^\infty}.
$$
Hence we conclude  that 
\begin{equation}\label{eq:decay2}
\sup_{t\geq0}\langle t\rangle^{\frac d4+\frac s2}\|e^{tE(D)}U\|_{\dot B^s_{2,1}}^\ell\lesssim  \sup_{k\leq k_0} \|\cF\ddk U\|_{L^\infty}.
\end{equation}
Next, we claim that for all $s\in(-d/2,2]$ and $i\in\{1,\cdots,4\},$ we have
\begin{equation}\label{eq:decay3}
\int_0^t\langle t-\tau\rangle^{-\frac d4-\frac s2} \sup_{k\leq k_0} \|\cF\ddk f_i(\tau)\|_{L^\infty}\,d\tau\lesssim\langle t\rangle^{-\frac d4-\frac s2}
\bigl(D^2(t)+X^2(t)\bigr)
\end{equation}
with $\,X(t)\eqdefa\|(a,\nabla a,u)\|_{\wt L^\infty_t(\dot B^{\frac d2-1}_{2,1})}+\Int_0^t\bigl(\|a\|_{\dot B^{\frac d2+1}_{2,1}}^\ell
+\|a\|_{\dot B^{\frac d2}_{2,1}}^h+\|u\|_{\dot B^{\frac d2+1}_{2,1}}\bigr)\,d\tau.$\smallbreak
Of course, as the Fourier transform maps $L^1$ to $L^\infty,$ it suffices to prove \eqref{eq:decay3} with $\|f_i\|_{L^1}$ instead of $\sup_{k\leq k_0} \|\cF\ddk f_i\|_{L^\infty}.$
\medbreak
To bound the term with $f_1,$ we use the following decomposition:
$$
f_1=u\cdot\nabla a+ a\, \div u^\ell + a\,\div u^h.
$$
Now, from Cauchy-Schwarz inequality and the definition of $D(t),$ one may write
\begin{align*}
\int_0^t\langle t-\tau\rangle^{-\frac d4-\frac s2}\|(u\cdot\nabla a)(\tau)\|_{L^1}\,d\tau
&\leq \Bigl(\sup_{0\leq\tau\leq t}\langle \tau\rangle^{\frac d4}\|u(\tau)\|_{L^2}\Bigr) 
\Bigl(\sup_{0\leq\tau\leq t}\langle \tau\rangle^{\frac d4+\frac12}\|\nabla a(\tau)\|_{L^2}\Bigr)\\
&\hspace{4cm}\times\int_0^t\langle t-\tau\rangle^{-\frac d4-\frac s2}\langle\tau\rangle^{-\frac{d}2-\frac12}\,d\tau\\
&\lesssim\langle t\rangle^{-\frac d4-\frac s2} D^2(t),
\end{align*}
where we used   \eqref{eq:fact1} and the fact that $0<\frac d4+\frac s2\leq \frac d2+\frac12\cdotp$
\medbreak
Bounding the term with $a\,\div u^\ell$ is totally similar.
Regarding the term with $a\,\div u^h,$ we use that if $t\geq2,$
$$\displaylines{
\int_0^t\langle t-\tau\rangle^{-\frac d4-\frac s2}\|(a\div u^h)(\tau)\|_{L^1}\,d\tau\lesssim 
\int_0^1\langle t-\tau\rangle^{-\frac d4-\frac s2} \|a(\tau)\|_{L^2}\|\div u^h(\tau)\|_{L^2}\,d\tau
\hfill\cr\hfill+\int_1^t\langle t-\tau\rangle^{-\frac d4-\frac s2}\langle \tau\rangle^{-1-\frac{d}4}
\bigl(\langle\tau\rangle^{\frac d4}\|a(\tau)\|_{L^2}\bigr)\bigl(\tau\|\div u^h(\tau)\|_{L^2}\bigr)\,d\tau.}
$$
Therefore, as $-d/2<s\leq2,$ we get 
$$\displaylines{
\langle t\rangle^{\frac s2+\frac d4}\int_0^t\langle t-\tau\rangle^{-\frac d4-\frac s2}\|(a\div u^h)(\tau)\|_{L^1}\,d\tau\lesssim 
\Bigl(\sup_{\tau\in[0,t]}  \|a(\tau)\|_{L^2}\Bigr) \int_0^t \|\div u^h(\tau)\|_{L^2}\,d\tau
\hfill\cr\hfill+\Bigl(\sup_{\tau\in[0,t]} \langle\tau\rangle^{\frac d4}\|a(\tau)\|_{L^2}\Bigr)
\Bigl(\sup_{\tau\in[0,t]} \tau\|\div u^h(\tau)\|_{L^2}\Bigr),}
$$
and \eqref{eq:decay3} is thus satisfied by the term with  $a\,\div u^h$ if $t\geq2,$
the case $t\leq2$ being obvious as $\langle t\rangle\approx1$ and 
$\langle t-\tau\rangle\approx1$ for $0\leq\tau\leq t\leq 2$ and one may write
$$
\int_0^t\|a\,\div u^h\|_{L^1}\,d\tau\leq \|a\|_{L^2_t(L^2)}\|\div u^h\|_{L_t^2(L^2)}\lesssim X^2(t).
$$
Handling the terms with $f_2$ and $f_3$ is totally similar:  $k(a)\nabla a$ and  $u\cdot\nabla u^\ell$ may be treated as $u\cdot\nabla a,$
  and $u\cdot\nabla u^h,$ as $a\,\div u^h.$
For $f_4,$ we write that
$$
f_4=I(a)\cA u^\ell+ I(a)\cA u^h.
$$
Now, we have
 $$\displaylines{
 \int_0^t\langle t-\tau\rangle^{-\frac d4-\frac s2}\|I(a)\cA u^\ell\|_{L^1}\,d\tau\hfill\cr\hfill
 \lesssim \Bigl(\sup_{\tau\in[0,t]} \langle\tau\rangle^{\frac d4}\|a(\tau)\|_{L^2}\Bigr)
  \Bigl(\sup_{\tau\in[0,t]} \langle\tau\rangle^{\frac d4+1}\|\nabla^2 u^\ell(\tau)\|_{L^2}\Bigr)
 \int_0^t \langle t-\tau\rangle^{-\frac d4-\frac s2}\langle\tau\rangle^{-1-\frac d2}\,d\tau.}
 $$
 Hence, thanks to \eqref{eq:fact1}, the term with $I(a)\cA u^\ell$ fulfills \eqref{eq:decay3}. 
 Finally,  for $t\geq2,$ $$\displaylines{
 \int_0^t\langle t-\tau\rangle^{-\frac d4-\frac s2}\|I(a)\cA u^h\|_{L^1}\,d\tau
 \lesssim  \langle t\rangle^{-\frac d4-\frac s2}\int_0^1\|a\|_{L^2}\|\nabla^2 u^h\|_{L^2}\,d\tau
 \hfill\cr\hfill+\int_1^t  \langle t-\tau\rangle^{-\frac d4-\frac s2} \langle\tau\rangle^{-1-\frac{d}4}
 \bigl( \langle\tau\rangle^{\frac d4}\|a(\tau)\|_{L^2}\bigr)
\bigl(\tau\|\nabla^2u^h(\tau)\|_{L^2}\bigr)d\tau,}
$$
 hence,  because  $-d/2<s\leq2$ and $\| \tau\nabla^2 u^h\|_{L_t^\infty(L^2)}\lesssim \|\tau\nabla u\|^h_{\wt L^\infty_t(\dot B^{\frac d2}_{2,1})},$
 $$
  \int_0^t\langle t-\tau\rangle^{-\frac d4-\frac s2}\|I(a)\cA u^h\|_{L^1}\,d\tau
 \lesssim \langle t\rangle^{-\frac d4-\frac s2}\bigl(D^2(t)+X^2(t)\bigr)\quad\hbox{for }\ t\geq2.$$
Obviously,  as $\langle t\rangle\approx1$ and $\langle t-\tau\rangle\approx1$ for $0\leq\tau\leq t\leq 2,$ 
we have the following inequality:
$$
  \int_0^t\langle t-\tau\rangle^{-\frac d4-\frac s2}\|I(a)\cA u^h\|_{L^1}\,d\tau
 \lesssim \langle t\rangle^{-\frac d4-\frac s2}X^2(t)\quad\hbox{for }\ t\leq2,$$
which completes the proof of \eqref{eq:decay3}. 
Combining with \eqref{eq:decay2} and using Duhamel's formula, we conclude that for all $t\geq0$ and $s\in(-d/2,2],$
\begin{equation}\label{eq:decay4a}
\langle t\rangle^{\frac d4+\frac s2}\|(a,u)\|_{\dot B^s_{2,1}}^\ell \lesssim 
D_0+D^2(t)+X^2(t).
\end{equation}


\subsubsection*{Step 2: Decay estimates for the high frequencies of $(\nabla a, u)$}

We here want to bound the second term of $D(t).$
Recall that Theorem \ref{th:main} ensures that 
$$
\|(\nabla a,u)\|_{\wt L^\infty_T(\dot B^{\frac d2-1}_{2,1})}\leq CX(0)\quad\hbox{for all }\ T\geq0.
$$
Therefore it suffices to bound $\|t^\alpha(\nabla a,u)\|_{\wt L^\infty_T(\dot B^{\frac d2-1}_{2,1})}$
for, say,   $T\geq2.$ 

Now,  the starting point is Inequality \eqref{eq:Lk0} which implies that for $k\geq k_0$ and for some
$c_0=c(k_0)>0,$ we have
$$\displaylines{
\frac12\frac d{dt}\cL_k^2+c_0\cL_k^2\leq\Bigl(\|(\nabla f_k,g_k)\|_{L^2}\hfill\cr\hfill+\|R_k(u,a)\|_{L^2}
+\|R_k(u,u)\|_{L^2}+\|\wt R_k(u,a)\|_{L^2}+\|\nabla u\|_{L^\infty}\cL_k\Bigr)\cL_k}
$$
with $f\eqdefa -a\div u,$ $g=-k(a)\nabla a-I(a)\cA u,$
$R_k(u,b)\eqdefa \ddk(u\cdot\nabla b)-u\cdot\nabla\ddk b$ for $b\in\{a,u\},$
and $\wt R_k^i(u,a)\eqdefa \d_i\ddk(u\cdot\nabla a)-u\cdot\nabla\d_i\ddk a.$
\smallbreak
After time integration, we discover that
$$\displaylines{
e^{c_0t}\cL_k(t)\leq\cL_k(0)+\int_0^te^{c_0\tau}\Bigl(\|(\nabla f_k,g_k)\|_{L^2}+\|R_k(u,a)\|_{L^2}
\hfill\cr\hfill+\|R_k(u,u)\|_{L^2}+\|\wt R_k(u,a)\|_{L^2}+\|\nabla u\|_{L^\infty}\cL_k\Bigr)d\tau,}
$$
whence, remembering that $\cL_k\approx\|(\ddk\nabla a,\ddk u)\|_{L^2}$ for $k\geq k_0,$
$$
\displaylines{
t^\alpha\|(\ddk\nabla a,\ddk u)(t)\|_{L^2}\lesssim t^\alpha e^{-c_0t}
\|(\ddk\nabla a,\ddk u)(0)\|_{L^2}
+t^\alpha\int_0^te^{c_0(\tau-t)}\Bigl(\|(\nabla f_k,g_k)\|_{L^2}\hfill\cr\hfill
+\|R_k(u,a)\|_{L^2}
+\|R_k(u,u)\|_{L^2}+\|\wt R_k(u,a)\|_{L^2}+\|\nabla u\|_{L^\infty}\|(\ddk\nabla a,\ddk u)\|_{L^2}\Bigr)d\tau}$$
and thus, multiplying both sides by $2^{k(\frac d2-1)},$ taking the supremum on $[0,T],$
and summing up over $k\geq k_0,$
\begin{equation}\label{eq:decay4b}
\|t^\alpha(\nabla a,u)\|_{\wt L^\infty_T(\dot B^{\frac d2-1}_{2,1})}\lesssim 
\|(\nabla a_0,u_0)\|_{\dot B^{\frac d2-1}_{2,1}}^h
\!+\sum_{k\geq k_0}\sup_{0\leq t\leq T}\biggl(t^\alpha\!\int_0^t\!e^{c_0(\tau-t)}2^{k(\frac d2-1)}S_k\,d\tau\biggr)
\end{equation}
with $S_k\eqdefa\sum_{i=1}^5 S_k^i$ and 
$$\displaylines{
S_k^1\eqdefa \|(\nabla f_k,g_k)\|_{L^2},\quad
S_k^2\eqdefa\|R_k(u,a)\|_{L^2},\quad S_k^3\eqdefa\|R_k(u,u)\|_{L^2},\cr
S_k^4\eqdefa\|\wt R_k(u,a)\|_{L^2}, \quad
S_k^5\eqdefa\|\nabla u\|_{L^\infty}\|(\ddk\nabla a,\ddk u)\|_{L^2}.}$$
In order to bound the sum, we first notice that 
$$\displaylines{
\sum_{k\geq k_0}\sup_{0\leq t\leq 2}\biggl(t^\alpha\!\int_0^t\!e^{c_0(\tau-t)}2^{k(\frac d2-1)}S_k(\tau)\,d\tau\biggr)
\lesssim\int_0^2 \sum_{k\geq k_0}2^{k(\frac d2-1)}S_k(\tau)\,d\tau.}
$$
Hence taking advantage of \eqref{eq:transport2} and of a similar inequality 
for $\wt R_k(u,a),$ we end up with 
$$\displaylines{
\sum_{k\geq k_0}\sup_{0\leq t\leq 2}t^\alpha\!\int_0^t\!e^{c_0(\tau-t)}2^{k(\frac d2-1)}S_k\,d\tau
\lesssim\int_0^2 \Bigl(\|(\nabla f,g)\|_{\dot B^{\frac d2-1}_{2,1}}+
\|\nabla u\|_{\dot B^{\frac d2}_{2,1}}\|(a,\nabla a,u)\|_{\dot B^{\frac d2-1}_{2,1}}\Bigr)d\tau.}
$$
Bounding $\nabla f$ and $g$ as in the proof of Theorem \ref{th:main} leads to 
\begin{equation}\label{eq:decay5}
\sum_{k\geq k_0}\sup_{0\leq t\leq 2}t^\alpha\!\int_0^t\!e^{c_0(\tau-t)}2^{k(\frac d2-1)}S_k\,d\tau
\lesssim X^2(2).
\end{equation}
To bound the supremum on $[2,T],$ we split the integral on $[0,t]$ into  integrals 
on  $[0,1]$ and $[1,t],$ respectively.   
The $[0,1]$ part of the integral is easy to handle: we have $$\begin{aligned}
\sum_{k\geq k_0}\sup_{2\leq t\leq T}t^\alpha\!\int_0^1\!e^{c_0(\tau-t)}2^{k(\frac d2-1)}S_k(\tau)\,d\tau
&\leq  \sum_{k\geq k_0}\sup_{2\leq t\leq T} t^\alpha e^{-\frac{c_0}2t} \int_0^1 2^{k(\frac d2-1)}S_k\,d\tau\\
&\lesssim \int_0^1  \sum_{k\geq k_0}  2^{k(\frac d2-1)}S_k\,d\tau. 
\end{aligned}
$$
Hence
\begin{equation}\label{eq:decay5a}
\sum_{k\geq k_0}\sup_{2\leq t\leq T}\biggl(t^\alpha\!\int_0^1\!e^{c_0(\tau-t)}2^{k(\frac d2-1)}S_k(\tau)\,d\tau\biggr)\lesssim X^2(1).
\end{equation}
Let us finally consider the $[1,t]$ part of the integral for $2\leq t\leq T.$  We shall use repeatedly the following inequality
\begin{equation}\label{eq:decay9}
\|\tau\nabla u\|_{\wt L^\infty_t(\dot B^{\frac d2}_{2,1})}\lesssim D(t),
\end{equation}
which is straightforward as regards the high frequencies of $u$ and stems from 
$$
\|\tau\nabla u\|_{\wt L^\infty_t(\dot B^{\frac d2}_{2,1})}^\ell\lesssim
\|\langle\tau\rangle^{\frac d4+\frac 12}\nabla u\|_{\wt L^\infty_t(\dot B^{\frac d2}_{2,1})}^\ell
\lesssim\|\langle\tau\rangle^{\frac d4+\frac 12} u\|_{L^\infty_t(\dot B^{1}_{2,1})}^\ell\leq D(t)
$$
for the low frequencies of $u.$
\medbreak
Regarding the contribution of $S_k^1,$ we first notice that, by virtue of \eqref{eq:fact1},
\begin{equation}\label{eq:decay6}
\sum_{k\geq k_0}\sup_{2\leq t\leq T}t^\alpha\!\int_1^t\!e^{c_0(\tau-t)}2^{k(\frac d2-1)}S_k^1(\tau)\,d\tau
\lesssim \|\tau^\alpha(\nabla f, g)\|_{\wt L^\infty_T(\dot B^{\frac d2-1}_{2,1})}^h.
\end{equation}
Now, product laws in tilde spaces ensures that
$$
\|\tau^\alpha\nabla f\|_{\wt L^\infty_T(\dot B^{\frac d2-1}_{2,1})}^h\lesssim\|\tau^{\alpha-1} a\|_{\wt L^\infty_T(\dot B^{\frac d2}_{2,1})}
\|\tau\div u\|_{\wt L^\infty_T(\dot B^{\frac d2}_{2,1})}.
$$
The high frequencies of the first term of the r.h.s. is obviously bounded by $D(T).$ As for the low frequencies, we notice that
if $d\leq4$ then for all small enough $\ep>0,$ 
\begin{equation}\label{eq:decay6a}
\|\tau^{\frac d2-\ep}a\|^\ell_{\wt L_T^\infty(\dot B^{\frac d2}_{2,1})} \lesssim
 \|\tau^{\frac d2-\ep}a\|^\ell_{L_T^\infty(\dot B^{\frac d2-2\ep}_{2,1})}\leq D(T)\end{equation}
 and if $d\geq5,$ taking $s=2$ in the first term of $D(T),$
  \begin{equation}\label{eq:decay6b}\|\tau^{\frac d4+1}a\|^\ell_{\wt L_T^\infty(\dot B^{\frac d2}_{2,1})} \lesssim
 \|\tau^{\frac d4+1}a\|^\ell_{L_T^\infty(\dot B^{2}_{2,1})}\leq D(T).\end{equation}
Therefore, using \eqref{eq:decay9} and remembering  the definition of $\alpha,$ we get  
$$
\|\tau^\alpha\nabla f\|_{\wt L^\infty_T(\dot B^{\frac d2-1}_{2,1})}^h\lesssim D^2(T).
$$
Next, we have 
$$
\|\tau^\alpha(k(a)\nabla a^h)\|_{\wt L^\infty_T(\dot B^{\frac d2-1}_{2,1})}\lesssim \|a\|_{\wt L^\infty_T(\dot B^{\frac d2}_{2,1})}
\|\tau^\alpha a\|_{\wt L^\infty_T(\dot B^{\frac d2}_{2,1})}^h\leq X(T)D(T)
$$
and, according to \eqref{eq:decay6a} and \eqref{eq:decay6b}, 
$$
\|\tau^\alpha(k(a)\nabla a^\ell)\|_{\wt L^\infty_T(\dot B^{\frac d2-1}_{2,1})}\lesssim \|\tau^{1-\ep}a\|_{\wt L^\infty_T(\dot B^{\frac d2}_{2,1})}
\|\tau^{\alpha-1+\ep} a\|_{\wt L^\infty_T(\dot B^{\frac d2}_{2,1})}^\ell\lesssim D^2(T).
$$
We also see that
$$
\|\tau^\alpha I(a)\cA u\|_{\wt L^\infty_T(\dot B^{\frac d2-1}_{2,1})}
\lesssim\|\tau\nabla^2u\|_{\wt L^\infty_T(\dot B^{\frac d2-1}_{2,1})}\bigl(\|\tau^{\alpha-1}a\|_{\wt L^\infty_T(\dot B^{\frac d2}_{2,1})}^\ell
+\|\tau^{\alpha-1}a\|_{\wt L^\infty_T(\dot B^{\frac d2}_{2,1})}^h\bigr).
$$
The first term of the r.h.s. may be bounded by virtue of \eqref{eq:decay9}, and it is also clear that
the last term is bounded by $D(T).$ 
As for  the second one, we use again \eqref{eq:decay6a} and \eqref{eq:decay6b}.
Resuming to \eqref{eq:decay6}, we end up with 
$$
\sum_{k\geq k_0}\sup_{2\leq t\leq T}t^\alpha\!\int_1^t\!e^{c_0(\tau-t)}2^{k(\frac d2-1)}S_k^1(\tau)\,d\tau
\lesssim D^2(T).$$
To bound the term with $S_k^2,$ we use the fact that 
$$
\int_1^te^{c_0(\tau-t)}\|R_k(u,a)\|_{L^2}\,d\tau\leq
\|R_k(\tau u,\tau^{\alpha-1}a)\|_{L^\infty_t(L^2)}
\int_1^te^{c_0(\tau-t)} \tau^{-\alpha}\,d\tau.
$$
Hence, thanks to  \eqref{eq:fact1} and to \eqref{eq:transport2} (adapted to tilde spaces),
$$
\begin{aligned}
\sum_{k\geq k_0}\sup_{2\leq t\leq T}
\biggl(t^\alpha\!\int_1^t\!e^{c_0(\tau-t)}2^{k(\frac d2-1)}S_k^2(\tau)\,d\tau\biggr)
&\lesssim \sum_{k\geq k_0} 2^{k(\frac d2-1)}  \|R_k(\tau u,\tau^{\alpha-1}a)\|_{L^\infty_T(L^2)}\\
&\lesssim \|\tau\nabla u\|_{\wt L^\infty_T(\dot B^{\frac d2}_{2,1})} \|\tau^{\alpha-1}a\|_{\wt L^\infty_T(\dot B^{\frac d2-1}_{2,1})}.
\end{aligned}$$
The first term of the r.h.s. may be bounded thanks to \eqref{eq:decay9}, 
and the high frequencies of the last one are obviously bounded by $D(T).$
To bound   $\|\tau^{\alpha-1}a\|_{\wt L^\infty_T(\dot B^{\frac d2-1}_{2,1})}^\ell,$
we use the following two inequalities
$$\begin{aligned}
 \|\tau^{\alpha-1}a\|_{\wt L^\infty_T(\dot B^{\frac d2-1}_{2,1})}^\ell&\lesssim \|\tau^{\alpha-1}a\|_{L^\infty_T(\dot B^{\frac d2-1-2\ep}_{2,1})}^\ell&\quad\hbox{if }\ d\leq6,\\
  \|\tau^{\alpha-1}a\|_{\wt L^\infty_T(\dot B^{\frac d2-1}_{2,1})}^\ell&\lesssim \|\tau^{\alpha-1}a\|_{L^\infty_T(\dot B^{2}_{2,1})}^\ell&\quad\hbox{if }\ d\geq7.\end{aligned}
 $$
Because  $\alpha-1=\frac d2-\frac12-\ep$ if $d\leq6,$ and 
 $\alpha-1=\frac d4+1$   if $d\geq7,$ the r.h.s. above are bounded by $D(T).$
 We eventually get 
$$\sum_{k\geq k_0}\sup_{2\leq t\leq T}t^\alpha\!\int_1^t\!e^{c_0(\tau-t)}2^{k(\frac d2-1)}S_k^2(\tau)\,d\tau
\lesssim D^2(T).$$
 The terms $S_k^3$ and $S_k^4$  may be treated along the same lines.
\medbreak
Finally, using product laws and \eqref{eq:fact1}, we get
$$
\displaylines{
\sum_{k\geq k_0}\sup_{2\leq t\leq T}t^\alpha\!\int_1^t\!e^{c_0(\tau-t)}2^{k(\frac d2-1)}S_k^5(\tau)\,d\tau\hfill\cr\hfill
\lesssim \|\tau\nabla u\|_{\wt L^\infty_T(\dot B^{\frac d2}_{2,1})}\|\tau^{\alpha-1}(\nabla a,u)\|^h_{\wt L^\infty_T(\dot B^{\frac d2-1}_{2,1})}
\sup_{2\leq t\leq T} t^\alpha \int_1^te^{c_0(\tau-t)}\tau^{-\alpha}\,d\tau\lesssim D^2(T).}
$$ 
Putting all the above inequalities together, we conclude that 
$$
\sum_{k\geq k_0}\sup_{2\leq t\leq T}\biggl(t^\alpha\!\int_1^t\!e^{c_0(\tau-t)}2^{k(\frac d2-1)}S_k(\tau)\,d\tau\biggr)\lesssim D(T)X(T)+D^2(T). 
$$
Then plugging this latter inequality, \eqref{eq:decay5}   and \eqref{eq:decay5a} in  \eqref{eq:decay4b} yields
\begin{equation}\label{eq:decay7}
\|\langle\tau\rangle^\alpha(\nabla a,u)\|_{\wt L^\infty_T(\dot B^{\frac d2-1}_{2,1})}\lesssim
\|(\nabla a_0,u_0)\|^h_{\dot B^{\frac d2-1}_{2,1}}+X^2(T)+D^2(T).
\end{equation}


\subsubsection*{Step 3: Decay estimates with gain of regularity  for the high frequencies of $\nabla u$}

In order to bound the last term of $D(t),$ we shall use the fact that the velocity $u$ satisfies
$$
\d_tu-\cA u=F\eqdefa -(1+k(a))\nabla a-u\cdot\nabla u-I(a)\cA u,
$$
whence
$$
\d_t(t\cA u)-\cA(t\cA u)=\cA u+t\cA F.
$$
Because the maximal regularity estimates for the Lam\'e semi-group are the same as for the heat semi-group
(see the beginning of Section \ref{s:local}), 
we deduce from Remark \ref{r:heat}  that 
$$
\|t\cA u\|_{\wt L_t^\infty(\dot B^{\frac d2-1}_{2,1})}^h \lesssim \|\cA u\|_{L_t^1(\dot B^{\frac d2-1}_{2,1})}^h +\|t\cA F\|_{\wt L^\infty_t(\dot B^{\frac d2-3}_{2,1})}^h,
$$
whence, using the bounds given by Theorem \ref{th:main1}, 
\begin{equation}\label{eq:decay8}
\|t\nabla u\|_{\wt L_t^\infty(\dot B^{\frac d2}_{2,1})}^h \lesssim  X(0)
 +\|\tau F\|_{\wt L^\infty_t(\dot B^{\frac d2-1}_{2,1})}^h.
\end{equation}
In order to bound the last term, we notice that, because $\alpha\geq1,$ we have
$$
\|\tau\nabla a\|_{\wt L^\infty_t(\dot B^{\frac d2-1}_{2,1})}^h\lesssim
\|\langle\tau\rangle^\alpha a\|_{\wt L^\infty_t(\dot B^{\frac d2}_{2,1})}^h.
$$
Next, product and composition estimates adapted to tilde spaces give
$$
\|\tau\,k(a)\nabla a\|_{\wt L^\infty_t(\dot B^{\frac d2-1}_{2,1})}^h\lesssim \|\tau^{\frac12} a\|_{\wt L^\infty_t(\dot B^{\frac d2}_{2,1})}^2\leq D^2(t),
$$
as well as 
$$
\|\tau \, u\cdot\nabla u\|_{\wt L^\infty_t(\dot B^{\frac d2-1}_{2,1})}^h\lesssim \|u\|_{\wt L^\infty_t(\dot B^{\frac d2-1}_{2,1})}
\|\tau \nabla u\|_{\wt L^\infty_t(\dot B^{\frac d2}_{2,1})}
$$
and 
$$
\|\tau I(a)\cA u\|_{\wt L^\infty_t(\dot B^{\frac d2-1}_{2,1})}^h\lesssim \|a\|_{\wt L^\infty_t(\dot B^{\frac d2}_{2,1})}
\|\tau \nabla^2 u\|_{\wt L^\infty_t(\dot B^{\frac d2-1}_{2,1})}.$$
Therefore, resuming to \eqref{eq:decay8} and remembering \eqref{eq:decay9}, we get
$$
\|t\nabla u\|_{\wt L_t^\infty(\dot B^{\frac d2}_{2,1})}^h \lesssim  X(0) +D(t)X(t)+D^2(t)
+\|\langle\tau\rangle^\alpha a\|_{\wt L^\infty_t(\dot B^{\frac d2}_{2,1})}^h.
$$
Finally,  bounding the last term according
to \eqref{eq:decay7},  and adding up the obtained  inequality to \eqref{eq:decay4a} and \eqref{eq:decay7} yields
$$
D(t)\lesssim D_0+ \|(\nabla a_0, u_0)\|_{\dot B^{\frac d2-1}_{2,1}}^h+ X^2(t)+D^2(t).
$$
As Theorem \ref{th:main1} ensures that $X(t)$ is small, on can now conclude 
that \eqref{eq:D} is fulfilled for all time if $D_0$ and $ \|(\nabla a_0, u_0)\|_{\dot B^{\frac d2-1}_{2,1}}^h$
are small enough.
\end{proof}


\section{Appendix}

Here we  recall various estimates for the flow
that have been used repeatedly in the proof of Theorem \ref{th:small1}. 
More details may be found in \cite{D7} or \cite{DM}. 

Recall that if  $v:[0,T)\times\R^d\to\R^d$ is measurable, such that $t\mapsto v(t,x)$
is in $L^1(0,T)$ for all $x\in\R^d$ and in addition $\nabla v\in L^1(0,T;L^\infty)$ then it has, by virtue of the Cauchy-Lipschitz theorem, a unique $C^1$
flow $X_v$ satisfying
$$
X_v(t,y)=y+\int_0^t v(\tau,X_v(\tau,y))\,d\tau\quad\hbox{for all }\ t\in[0,T).
$$
In addition, for all $t\in[0,T),$ the map  $X_v(t,\cdot)$ is a $C^1$-diffeomorphism over $\R^d.$ 
\begin{lemma}\label{eq:flow} Let $p\in[1,\infty).$ 
Let $\bar v(t,y)\eqdefa v(t,X(t,y)).$ Under Assumption \eqref{eq:smallv},
we have for all $t\in[0,T],$
\begin{eqnarray}\label{eq:U1}
&&\|\Id-\adj(DX_v(t))\|_{\dot  B^{\frac dp}_{p,1}}\lesssim \|D\bar v\|_{L_t^1(\dot B^{\frac dp}_{p,1})},\\
\label{eq:U2}
&&\|\Id-A_v(t)\|_{\dot B^{\frac dp}_{p,1}}\lesssim \|D\bar v\|_{L_t^1(\dot B^{\frac dp}_{p,1})},\\
\label{eq:U4}&&\|\adj(DX_v(t)){}^T\!A_v(t)-\Id\|_{\dot B^{\frac dp}_{p,1}}
\lesssim \|D\bar v\|_{L_t^1(\dot B^{\frac dp}_{p,1})},\\
\label{eq:J}
&&\|J_v^{\pm1}(t)-1\|_{\dot B^{\frac dp}_{p,1}}\lesssim \|D\bar v\|_{L_t^1(\dot B^{\frac dp}_{p,1})}.
\end{eqnarray}
\end{lemma}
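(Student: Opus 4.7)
The plan starts from the fundamental identity
\[
X_v(t,y)=y+\int_0^t \bar v(\tau,y)\,d\tau,
\]
which is exactly how $\bar v$ was defined. Differentiating in $y$ and setting $M(t,y)\eqdefa DX_v(t,y)-\Id$, one reads off
\[
M(t,y)=\int_0^t D\bar v(\tau,y)\,d\tau,
\]
so that Minkowski's inequality immediately yields
\[
\|M(t)\|_{\dot B^{\frac dp}_{p,1}}\leq \|D\bar v\|_{L_t^1(\dot B^{\frac dp}_{p,1})}.
\]
Under \eqref{eq:smallv} (and the standard comparability of $\|D\bar v\|$ and $\|Dv\|$ norms when $DX_v$ is close to $\Id$), the right-hand side can be made arbitrarily small, and thanks to the embedding $\dot B^{\frac dp}_{p,1}\hookrightarrow L^\infty$ this makes $\|M\|_{\dot B^{\frac dp}_{p,1}}$ and $\|M\|_{L^\infty}$ arbitrarily small uniformly in $t\in[0,T]$.

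My strategy is then to prove all four inequalities by a single mechanism: $\dot B^{\frac dp}_{p,1}$ is a Banach algebra (Remark~\ref{r:algebra}), and each of the quantities in \eqref{eq:U1}--\eqref{eq:J} can be written as $F(M)$ with $F$ smooth near $0$ and $F(0)=0$, in a form where $F(M)$ is a finite polynomial in $M$ or an absolutely convergent series of products of $M$. For \eqref{eq:U1} and the first half of \eqref{eq:J}, I exploit explicit polynomial expressions: $\adj(\Id+M)-\Id$ and $\det(\Id+M)-1$ are polynomials in the entries of $M$ of degrees $d-1$ and $d$ respectively, with no constant term. The algebra inequality combined with the smallness of $\|M\|_{\dot B^{\frac dp}_{p,1}}$ at once gives
\[
\|\adj(DX_v)-\Id\|_{\dot B^{\frac dp}_{p,1}}+\|J_v-1\|_{\dot B^{\frac dp}_{p,1}}\lesssim \|M\|_{\dot B^{\frac dp}_{p,1}}\lesssim \|D\bar v\|_{L_t^1(\dot B^{\frac dp}_{p,1})}.
\]

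For \eqref{eq:U2} I invoke the Neumann series $A_v=(\Id+M)^{-1}=\sum_{k\geq 0}(-M)^k$, which converges pointwise thanks to the smallness of $\|M\|_{L^\infty}$; the algebra property and the smallness of $\|M\|_{\dot B^{\frac dp}_{p,1}}$ upgrade this to absolute convergence of the tail $A_v-\Id=\sum_{k\geq1}(-M)^k$ in $\dot B^{\frac dp}_{p,1}$, with the claimed bound. The estimate for $J_v^{-1}-1$ in \eqref{eq:J} follows from the scalar Neumann series $J_v^{-1}-1=\sum_{k\geq 1}(1-J_v)^k$ applied to the bound for $J_v-1$ just established. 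Finally, \eqref{eq:U4} is reduced to the preceding three via the decomposition
\[
\adj(DX_v){}^T\!A_v-\Id=\bigl(\adj(DX_v)-\Id\bigr){}^T\!A_v+({}^T\!A_v-\Id),
\]
together with the $L^\infty$--Besov product estimate of Corollary~\ref{c:op}, ${}^T\!A_v$ being uniformly bounded in $L^\infty$ by the Neumann series for $A_v$.

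The main subtlety, and the only real obstacle, is that Proposition~\ref{p:comp} is stated for scalar $F\colon\R\to\R$, whereas here the nonlinearities act on matrices. Rather than adapting that proposition to the matrix setting component by component, the plan is to bypass the abstract composition estimate entirely and to use the explicit series representations above—polynomial for $\adj$ and $\det$, Neumann for the inverse and the reciprocal—so that the algebra structure of $\dot B^{\frac dp}_{p,1}$ together with the smallness of $M$ inherited from \eqref{eq:smallv} suffices in every case.
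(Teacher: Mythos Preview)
Your proof is correct and follows essentially the same mechanism as the paper's: both rely on writing $DX_v-\Id=\int_0^t D\bar v\,d\tau$, on the algebra property of $\dot B^{\frac dp}_{p,1}$, and on Neumann series for the inverse quantities. The only noteworthy variant is your handling of $J_v-1$: the paper derives it from the ODE/Jacobi identity $J_v(t)-1=\int_0^t D\bar v:\adj(DX_v)\,d\tau$ and then invokes \eqref{eq:U1}, whereas you bypass this by expanding $\det(\Id+M)-1$ directly as a polynomial in $M$ with no constant term, which is slightly more self-contained.
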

\begin{proof}
As an example, let us prove the last item. We have thanks to the chain rule, 
\begin{equation}\label{eq:Jv}
J_v(t,y)=1+\int_0^t\div v(\tau,X_v(\tau,y))\,J_v(\tau,y)\,d\tau
=1+\int_0^t (D\bar v:\adj(DX_v))(\tau,y)\,d\tau.
\end{equation}
Hence, if Condition  \eqref{eq:smallv} holds then we have \eqref{eq:J} for $J_v,$
a consequence of the fact that $\dot B^{\frac dp}_{p,1}$ is an algebra, and of \eqref{eq:U1}.
In order to get the inequality for $J_v^{-1},$ it suffices to notice that
$$
J_v^{-1}(t,y)-1=(1+(J_v(t,y)-1))^{-1}-1=\sum_{k\geq1}(-1)^k\int_0^tD\bar v:\adj(DX_v)\,d\tau.
$$
\end{proof}

\begin{lemma} 
Let $\bar v_1$ and $\bar v_2$ be two vector-fields satisfying \eqref{eq:smallv},
and $\dv\eqdefa\bar v_2-\bar v_1.$ 
Then we have for all  $p\in[1,\infty)$ and $t\in[0,T]$:
\begin{equation}\label{eq:dA}
\|A_{v_2}-A_{v_1}\|_{L_t^\infty(\dot B^{\frac dp}_{p,1})} \lesssim 
 \|D\dv\|_{L_t^1(\dot B^{\frac dp}_{p,1})},
\end{equation}
\begin{equation}\label{eq:dAdj}
\|\adj(DX_{v_2})-\adj(DX_{v_1})\|_{L_t^\infty(\dot B^{\frac dp}_{p,1})} \lesssim 
 \|D\dv\|_{L_t^1(\dot B^{\frac dp}_{p,1})},
\end{equation}
\begin{equation}\label{eq:dJ}
\|J_{v_2}-J_{v_1}\|_{L_t^\infty(\dot B^{\frac dp}_{p,1})} \lesssim 
 \|D\dv\|_{L_t^1(\dot B^{\frac dp}_{p,1})}.
\end{equation}
 \end{lemma}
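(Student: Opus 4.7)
The starting observation is that, since $X_{v_i}(t,y)=y+\int_0^t\bar v_i(\tau,y)\,d\tau$, we have the pointwise identities
$$DX_{v_i}(t,y)-\Id=\int_0^t D\bar v_i(\tau,y)\,d\tau,\qquad DX_{v_2}(t)-DX_{v_1}(t)=\int_0^t D\dv(\tau)\,d\tau.$$
Minkowski's inequality and Assumption \eqref{eq:smallv} then give
$$\|DX_{v_i}-\Id\|_{L^\infty_t(\dot B^{\frac dp}_{p,1})}\leq \|D\bar v_i\|_{L^1_t(\dot B^{\frac dp}_{p,1})}\leq c,\qquad \|DX_{v_2}-DX_{v_1}\|_{L^\infty_t(\dot B^{\frac dp}_{p,1})}\leq \|D\dv\|_{L^1_t(\dot B^{\frac dp}_{p,1})}. \quad(\ast)$$
In particular, both $DX_{v_1}$ and $DX_{v_2}$ are of the form $\Id+(\hbox{small})$ in $L^\infty_t(\dot B^{\frac dp}_{p,1})$, hence also in $L^\infty_t(L^\infty)$ by continuous embedding.

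To prove \eqref{eq:dJ} and \eqref{eq:dAdj}, the plan is to exploit the fact that $\det$ and $\adj$ are polynomial functions of the matrix entries, of degrees $d$ and $d-1$ respectively. Multilinearity of the determinant applied column-wise yields
$$\det(A)-\det(B)=\sum_{k=1}^d \det\bigl(A_1,\ldots,A_{k-1},\,A_k-B_k,\,B_{k+1},\ldots,B_d\bigr),$$
so each summand factors as a polynomial of degree $d-1$ in the entries of $A$ and $B$ multiplied by entries of $A-B$. A completely analogous identity holds entrywise for $\adj(A)-\adj(B)$, the remaining polynomial factors being of degree $d-2$ (since each cofactor is itself a $(d-1)\times(d-1)$ determinant). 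Substituting $A=DX_{v_2}$, $B=DX_{v_1}$, the smallness of $DX_{v_i}-\Id$ combined with the algebra property of $\dot B^{\frac dp}_{p,1}$ (Remark \ref{r:algebra}, valid because $p<\infty$) ensures that every polynomial factor is uniformly bounded in $L^\infty_t(\dot B^{\frac dp}_{p,1}\cap L^\infty)$. Multiplying by the remaining linear factor and using $(\ast)$ together with Corollary \ref{c:op} yields \eqref{eq:dJ} and \eqref{eq:dAdj}.

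The remaining inequality \eqref{eq:dA} follows from the classical resolvent identity
$$A_{v_2}-A_{v_1}=A_{v_2}\,(DX_{v_1}-DX_{v_2})\,A_{v_1},$$
obtained by inserting $DX_{v_2}A_{v_2}=\Id=DX_{v_1}A_{v_1}$. Writing $A_{v_i}=\Id+(A_{v_i}-\Id)$ and distributing yields four terms, each containing $DX_{v_1}-DX_{v_2}$ as a factor; the remaining factors are either $\Id$ or small perturbations $A_{v_i}-\Id$ controlled by \eqref{eq:U2}. Another application of the algebra property combined with $(\ast)$ closes the estimate.

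The whole argument is routine and parallels the proof of Lemma \ref{eq:flow} itself; no genuine obstacle appears. The only point requiring care is that $\Id\notin\dot B^{\frac dp}_{p,1}$, so one must systematically split $DX_{v_i}=\Id+(DX_{v_i}-\Id)$ and $A_{v_i}=\Id+(A_{v_i}-\Id)$ before invoking the product estimates — otherwise one would be applying algebra bounds to quantities not lying in the relevant Besov space.
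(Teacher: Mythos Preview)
Your proof is correct. The only notable difference from the paper is the algebraic identity you use for \eqref{eq:dA}: the paper expands $A_{v_i}=(\Id+C_i)^{-1}$ as a Neumann series $\sum_{k\geq0}(-1)^kC_i^k$ with $C_i=\int_0^tD\bar v_i\,d\tau$, then telescopes $C_2^k-C_1^k$ to extract the factor $\int_0^tD\dv\,d\tau$; you instead use the resolvent identity $A_{v_2}-A_{v_1}=A_{v_2}(DX_{v_1}-DX_{v_2})A_{v_1}$ and split off the identity from each $A_{v_i}$. Both routes land on the same use of the algebra property of $\dot B^{\frac dp}_{p,1}$, and your remark about needing to isolate $\Id$ before applying product estimates is exactly the point. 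For \eqref{eq:dJ} and \eqref{eq:dAdj} the paper merely says ``similar''; your explicit multilinearity argument is a correct way to fill that in.
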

 \begin{proof}
 In order to prove the first inequality, we use the fact
 that, for $i=1,2,$ we have
 $$
 A_{v_i}=(\Id+C_i)^{-1}=\sum_{k\geq0}(-1)^kC_i^k\quad\hbox{with}\quad
 C_i(t)=\int_0^t D\bar v_i\,d\tau.
 $$
 Hence
$$
A_{v_2}-A_{v_1}= \sum_{k\geq 1} \Bigl(C_2^k-C_1^k\Bigr)
=\biggl(\int_0^tD\dv\,d\tau\biggr)\sum_{k\geq1}\sum_{j=0}^{k-1}
C_1^jC_2^{k-1-j}.
$$
So using the fact that $\dot B^{\frac dp}_{p,1}$ is a Banach algebra, it is easy to conclude
to \eqref{eq:dA}.
Proving the second inequality is similar.
\end{proof}



\end{document}